\definecolor{cupgreen}{rgb}{0,0.498,0.208}
\definecolor{cupblue}{rgb}{0,0,.5}
\definecolor{cupred}{rgb}{1,0.04,0}
\definecolor{cuppink}{rgb}{0.925,0,0.545}
\definecolor{cupmagenta}{rgb}{0.624,0.161,0.424}
\definecolor{cupbrown}{rgb}{0.71,0.212,0.133}
\definecolor{cupgreen}{rgb}{0,0,0}
\definecolor{cupblue}{rgb}{0,0,0}
\definecolor{cupred}{rgb}{0,0,0}
\definecolor{cuppink}{rgb}{0,0,0}
\definecolor{cupmagenta}{rgb}{0,0,0}
\definecolor{cupbrown}{rgb}{0,0,0}
\definecolor{TITLE}{rgb}{0.0,0.0,1.0}
\definecolor{AUTHOR1}{rgb}{0.00,0.59,0.00}
\definecolor{AUTHOR2}{rgb}{0.50,0.00,1.00}
\definecolor{SECTION}{rgb}{0.50,0.00,1.00}
\definecolor{FOOTTITLE}{rgb}{0.00,0.50,0.75}
\definecolor{THM}{rgb}{0.7,0.3,0.5}
\definecolor{SEC}{rgb}{0,0,1}
\newtheorem{theorem}{{\color{THM} Theorem}}[section]
\newtheorem{proposition}[theorem]{{\color{THM}Proposition}}
\newtheorem{corollary}[theorem]{{\color{THM}Corollary}}
\theoremstyle{definition}
\newtheorem{example}[theorem]{{\color{THM}Example}}
\newtheorem{remark}[theorem]{{\color{THM}Remark}}
\numberwithin{equation}{section}
\newcommand{\D}{\mathcal D}
\newcommand{\M}{\mathcal M}
\newcommand{\K}{\mathcal L}
\newcommand{\NN}{\mathcal N}
\newcommand{\bea}{\begin{eqnarray*}}
\newcommand{\eea}{\end{eqnarray*}}
\begin{document}
%-------------------------- Pleased do not change the following line-------------------------------------------
\noindent \textcolor[rgb]{0.99,0.00,0.00}{}\\[.5in]
\title [Lie higher derivations on generalized matrix algebras]{Lie higher derivations on generalized matrix algebras}
\author[ Moafian]{ F. Moafian}
%\address{ Department of Pure Mathematics and  Center of Excellence
%in Analysis on Algebraic Structures (CEAAS), Ferdowsi University of Mashhad, P.O. Box 1159, Mashhad 91775, Iran.}
%\email{\textcolor[rgb]{0.00,0.00,0.84}{vishki@um.ac.ir}}
\address{Department of Pure Mathematics, Ferdowsi University
of Mashhad, P.O. Box 1159, Mashhad 91775, Iran.}
\email{\textcolor[rgb]{0.00,0.00,0.84}{fahimeh.moafian@yahoo.com}}
\begin{abstract}
In this paper, at first the construction of Lie higher derivations and higher derivations on a generalized matrix algebra were characterized; then the conditions under which a Lie higher derivation on generalized matrix algebras is proper are provided.
Finally, the applications of the findings are discused. 
\end{abstract}
\subjclass[2010]{Primary 16W25; Secondary 47B47; 15A78}
\keywords{Lie higher derivation, higher derivation, generalized matrix algebra, triangular algebra.}
\maketitle
\section{\color{SEC}introduction}
Let us recall some basic facts related to (Lie) higher derivations on a general algebra. Let $A$ be a unital algebra, over a unital commutative ring {\bf R}, $\mathbb N$ be the set of all natural numbers and $\mathbb N_0=\mathbb N\cup \{0\}$. 
\begin{enumerate}[\hspace{1em}\rm (a)]
\item A sequence $\D=\{\D_k\}_{k\in\mathbb{N}_0}$ (with $\D_0=id_A$) of linear maps on $A$ is called a higher derivation if \[\D_k(xy)=\sum_{i+j=k}\D_i(x)\D_j(y),\]
 for all $x,y\in A$ and $k\in\mathbb N_0$.
\item A sequence $\K=\{\K_k\}_{k\in\mathbb{N}_0}$ (with $\K_0=id_A$) of linear maps on $A$ is called a Lie higher derivation if
\[\K_k([x,y])=\sum_{i+j=k}[\K_i(x),\K_j(y)],\]
for all $x,y\in A$ and $k\in\mathbb N_0$, where $[\cdot,\cdot]$ stands for a commutator defined by $[x,y]=xy-yx$. 
\end{enumerate}
Note that $\D_1$ (resp. $\K_1$) is a derivation (resp. Lie derivation) when $\{\D_k\}_{k\in\mathbb{N}_0}$ (resp. $\{\K_k\}_{k\in\mathbb{N}_0}$) is a higher derivation (resp. Lie higher derivation). 
Let $D$ (resp. $L$) be a derivation (resp. Lie derivation) on $A$, then $\D=\{\frac{D^k}{k!}\}_{k\in\mathbb{N}_0}$ (resp. $\K=\{\frac{L^k}{k!}\}_{k\in\mathbb{N}_0}$) is a higher derivation (resp. Lie higher derivation) on $A$, where $D^0=id_A$ (resp. $\K^0=id_A$), the identity mapping of A. These kind of higher derivations (resp. Lie higher derivations) are called ordinary higher derivations (resp. Lie higher derivations).
Trivially, every higher derivation is a Lie higher derivation, but the converse is not true, in general. If $\D=\{\D_k\}_{k\in\mathbb{N}_0}$ is a higher derivation on $A$ and
$\tau=\{\tau_k\}_{k\in\mathbb{N}}$ is a sequence of linear maps on $A$ which is center valued (i.e. $\tau_k(A)\subseteq Z(A)$= the center of $A$), then $\D+\tau$ is a Lie higher derivation if and only if $\tau$ vanishes at commutators, i.e. $\tau_k([x,y])=0,$ for all $x,y\in A$ and $k\in\mathbb N$. Lie higher derivations of this form are called proper Lie higher derivations.
%A problem that we are dealing with is studying those conditions on an algebra such that every Lie higher derivation on it is
%proper.
We say that an algebra $A$ has Lie higher derivation (LHD for short) property if every Lie higher derivation on it is proper. A main problem in the realm of Lie higher derivations is that, under what conditions a Lie higher derivation on an algebra is proper. Many authors have studied the problem for various algebras; see \cite{MME, FH, H, LS, N, Q, QH, WX, XW, XW1} and references therein.

%In  \cite{M},  Mirzavaziri discussed the structure of higher derivation on an general algebra. He showed that every higher derivation on an algebra can be derived as certain combinations of its derivations.
Han \cite{H} studied Lie-type higher derivations on operator algebras. He showed that every Lie (triple) higher derivation on some
classical operator algebras is proper.
Wei and Xiao \cite{WX} have examined innerness of higher derivations on triangular algebras. They also discussed Jordan higher derivations and nonlinear Lie higher derivations on a triangular algebra in \cite{XW} and \cite{XW1}, respectively. Qi and Hou \cite{QH} showed that every Lie higher derivation on a nest algebra is proper. Li and Shen \cite{LS} and also Qi \cite{Q} have extended the main result of \cite{QH} for a triangular algebra by providing some sufficient conditions under which a Lie higher derivation on a triangular algebra is proper.

In this paper we investigate the LHD property for a generalized matrix algebra. Generalized matrix algebras were first introduced by Sands \cite{S}. Here, we offer definition of a generalized matrix algebra. A Morita context  $(A, B, M, N, \Phi_{MN}, \Psi_{NM})$ consists of two unital algebras $A$, $B$, an $(A,B)-$module $M$, a $(B,A)-$module $N$, and two module homomorphisms  $\Phi_{MN}:M\otimes_B N\longrightarrow A$ and $\Psi_{NM}:N\otimes_A M\longrightarrow B$ satisfying the following commutative diagrams:
\begin{equation*}\label{Dia}\begin{CD}
M\otimes_B N\otimes_A M @ >\Phi_{MN}\otimes I_M >> A\otimes_A M\\
@ VV I_M\otimes\Psi_{NM} V @ VV\cong V\\
M\otimes_BB @ >\cong >> M
\end{CD}
\end{equation*}
and
\begin{equation*}\begin{CD}
N\otimes_AM\otimes_BN @ > \Psi_{NM}\otimes I_N >> B\otimes_BN\\
@ VV I_N\otimes\Phi_{MN} V @ VV\cong V\\
N\otimes_AA @ > \cong >>N.
\end{CD}
\end{equation*}
For a Morita context  $(A, B, M, N, \Phi_{MN},\Psi_{NM})$, the set
\[\mathcal{G}=\left(\begin{array}{cc}
A & M \\
N & B \\
\end{array}\right)=\Bigg\{\left(\begin{array}{cc}
a & m \\
n & b \\
\end{array}\right)\Big|\ a\in A,\ m\in M,\ n\in N,\ b\in B\Bigg\}\]
forms an algebra under the usual matrix operations, where at least one of two modules $M$ and $N$ is nonzero. The algebra $\mathcal G$ is called a generalized matrix algebra. In above definition if $N=0$, then $\mathcal{G}$ becomes the triangular algebra ${\rm Tri}(A,M,B)$, whose (Lie) derivations and its properties are extensively examined by Cheung \cite{Ch2}. 

Let $\mathcal{G}=\left(\begin{array}{cc}
A & M \\
N & B \\
\end{array}\right)$ be a generalized matrix algebra. We are dealing with various types of faithfullness. 
\begin{enumerate}
\item The $(A,B)-$module $M$ is called left (resp. right) faithful if $aM=\{0\}$  (resp.  $M b=\{0\}$) necessities $a=0$ (resp.  $b=0$), for all $a\in A$ (resp. $b\in B$). If $M$ is both left and right faithful it is called faithful. The left and right faithfulness of $N$ can be defined in a similar way.
\item The $(A,B)-$module $M$ is called strongly faithful if 
\begin{enumerate}[\hspace{1em}]
\item either $M$ is faithful as a right $B-$module and $am=0$ implies $a=0$ or $m=0$ for all $a\in A, m\in M$; or
\item $M$ is faithful as a left $A-$module and $mb=0$ implies $m=0$ or $b=0$ for all $m\in M, b\in B.$
\end{enumerate}
The strong faithfulness for $N$ can be defined similarly.
\item The generalized matrix algebra $\mathcal G$ is called weakly faithful if 
\begin{eqnarray}\label{weakly faithful}
\nonumber aM=\{0\}=Na \quad {\rm implies} \quad a=0,\\
Mb=\{0\}=bN \quad {\rm implies} \quad b=0.
\end{eqnarray}
\end{enumerate}
It is evident that if $M$ is strongly faithful then $M$ is faithful and either $A$ or $B$ has no zero devisors. It is also trivial that if either $M$ or $N$ is faithful, then $\mathcal G$ is weakly faithful.\\
It is worth mentioning that in the case $\mathcal G$ is a triangular algebra the weak faithfulness of $\mathcal G$ is nothing more than faithfulness of $M$.

By an standard argument one can check that the center $Z(\mathcal{G})$ of $\mathcal{G}$ is
\[Z(\mathcal{G})=\{a\oplus b|\ a\in Z(A), b\in Z(B),\ am=mb,\ na=bn \quad{\rm for\ all} \quad m\in M, n\in N\},\]
where $a\oplus b=\left(\begin{array}{cc}
a & 0 \\
0 & b \\
\end{array}\right)\in \mathcal{G}.$ 
Consider two natural projections $\pi_A:\mathcal{G}\longrightarrow A$ and $\pi_B:\mathcal{G}\longrightarrow B$ by
\[\pi_A:\left(\begin{array}{cc}
           a & m \\
           n & b \\
\end{array}\right)
\mapsto a\quad {\rm and}\quad  \pi_B:\left(\begin{array}{cc}
           a & m \\
           n & b \\
\end{array}\right)\mapsto b.\]
Clearly $\pi_A(Z(\mathcal{G}))\subseteq Z(A)$ and $\pi_B(Z(\mathcal{G}))\subseteq Z(B)$. Moreover, if $\mathcal G$ is weakly faithful then  $\pi_A(Z(\mathcal{G}))$  is isomrphic to  $\pi_B(Z(\mathcal{G}))$. More precisely, there exists a unique algebra isomorphism
\[\varphi:\pi_A(Z(\mathcal{G}))\longrightarrow \pi_B(Z(\mathcal{G}))\] such that $am=m\varphi(a)$ and $\varphi(a)n=na$ for all $m\in M$, $n\in N$; or equivalently, $a\oplus\varphi(a)\in Z(\mathcal G)$ for all $a\in A,$ (see \cite[Proposition 2.1]{Be} and 
\cite[Proposition 3]{Ch2}).\\

This paper is organized as follows; in section 2, we characterize the structure of Lie higher derivations and higher derivations on the generalized matrix algebra $\mathcal G$. The LHD property for the generalized matrix algebra $\mathcal G$ is investigated in section 3. In section 4, we offer some alternative sufficient conditions ensuring the LHD property for $\mathcal G$ (Theorems \ref{ideal}, \ref{domain}, \ref{strong}). We then come to our main result, Theorem \ref{main}, collecting some sufficient conditions ensuring the LHD property for a generalized matrix algebra.
Section 5 includes some applications of our conclusions to some main examples of a generalized matrix algebra such as: trivial generalized matrix algebras, triangular algebras, unital algebras with a nontrivial idempotent, the algebra $B(X)$ of operators on a Banach space $X$ and the full matrix algebra $M_n(A)$ on a unital algebra $A$. Since the proof of Theorems \ref{LHD} and \ref{HD} are too long we devote section 6 to them.

\section{\color{SEC}The structure of (Lie) higher derivations on $\mathcal G$}
We start this section with the following result of \cite{LW} which describes the structure of derivations and Lie derivations on a generalized matrix algebra.
\begin{proposition}[{\cite[Propositions  4.1, 4.2]{LW}}]\label{k=1}
Let $\mathcal{G}$ be a generalized matrix algebra.

$\bullet$ If  $A$ and $B$ are $2-$torsion free then a linear map $\K_1:\mathcal{G}\to\mathcal{G}$ is a Lie derivation if and only if it has the presentation
\[\K_1\left(\begin{array}{cc}
a & m \\
n & b \\
\end{array}\right)=\left(\begin{array}{cc}
\mathfrak p_{11}(a)+\mathfrak p_{12}(b)-mn_1-m_1n& am_1-m_1b+\mathfrak f_{13}(m) \\
n_1a-bn_1+\mathfrak g_{14}(n) & \mathfrak q_{11}(a)+\mathfrak q_{12}(b)+n_1m+nm_1 \\
\end{array}\right),\]
where $m_1\in M, n_1\in N$ and
$\mathfrak p_{11}:A\longrightarrow A,\ \mathfrak p_{12}:B\longrightarrow A,\ \mathfrak q_{11}:A\longrightarrow B,\ \mathfrak q_{12}:B\longrightarrow B,\ \mathfrak f_{13}:M\longrightarrow M,\ \mathfrak g_{14}:N\longrightarrow N$ are linear maps satisfying the following properties:
\begin{enumerate}[\hspace{1em}\rm (1)]
\item $\mathfrak p_{11}$ and  $\mathfrak q_{12}$ are Lie derivations.
\item $\mathfrak p_{12}([b,b'])=0, \mathfrak q_{11}([a,a'])=0.$
\item $\mathfrak p_{12}(B)\subseteq Z(A), \mathfrak q_{11}(A)\subseteq Z(B).$
\item $\mathfrak f_{13}(am)=\mathfrak p_{11}(a)m-m\mathfrak q_{11}(a)+a\mathfrak f_{13}(m)$,  $\mathfrak f_{13}(mb)=m\mathfrak q_{12}(b)-\mathfrak p_{12}(b)m+\mathfrak f_{13}(m)b$.
\item $\mathfrak g_{14}(na)=n\mathfrak p_{11}(a)-\mathfrak q_{11}(a)n+\mathfrak g_{14}(n)a$, $\mathfrak g_{14}(bn)=\mathfrak q_{12}(b)n-n\mathfrak p_{12}(b)+b\mathfrak g_{14}(n)$.
\item $\mathfrak p_{11}(mn)-\mathfrak p_{12}(nm)=m\mathfrak g_{14}(n)+\mathfrak f_{13}(m)n$,  $\mathfrak q_{12}(nm)-\mathfrak q_{11}(mn)=\mathfrak g_{14}(n)m+n\mathfrak f_{13}(m)$.
%\item $m_1:=f_{11}(1)=-f_{12}(1)$ and $n_1:=g_{11}(1)=-g_{12}(1)$ ,
%\item $g_{11}(a)=n_1a$, $f_{11}(a)=am_1$, $g_{12}(b)=-bn_1$ and $f_{12}(b)=-m_1b$,
%\item $g_{13}(m)=0$ and $f_{14}(n)=0$,
%\item $p_{13}(m)=-mn_1$, $q_{13}(m)=n_1m$, $p_{14}(n)=-m_1n$ and $q_{14}(n)=nm_1$,
\end{enumerate}
\vspace{0.5cm}
$\bullet$ A linear map $\D_1:\mathcal{G}\to\mathcal{G}$ is a derivation if and only if it has the presentation
\[\D_1\left(\begin{array}{cc}
a & m \\
n & b \\
\end{array}\right)=\left(\begin{array}{cc}
\mathtt p_{11}(a)-mn_1-m_1n& am_1-m_1b+\mathtt f_{13}(m) \\
n_1a-bn_1+\mathtt g_{14}(n) & \mathtt q_{12}(b)+n_1m+nm_1 \\
\end{array}\right),\]
where $m_1\in M, n_1\in N$ and
$\mathtt p_{11}:A\longrightarrow A,\ \mathtt p_{12}:B\longrightarrow A,\ \mathtt q_{11}:A\longrightarrow B,\ \mathtt q_{12}:B\longrightarrow B,\ \mathtt f_{13}:M\longrightarrow M,\ \mathtt g_{14}:N\longrightarrow N$ are linear maps satisfying the  following properties:
\begin{enumerate}[\hspace{1em}\rm (a)]
\item $\mathtt p_{11}$ and  $\mathtt q_{12}$ are  derivations.
%\item $m_1:=f_{11}(1)=-f_{12}(1)$ and $n_1:=g_{11}(1)=-g_{12}(1)$ ,
%\item $g_{11}(a)=n_1a$, $f_{11}(a)=am_1$, $g_{12}(b)=-bn_1$ and $f_{12}(b)=-m_1b$,
\item $\mathtt f_{13}(am)=\mathtt p_{11}(a)m+a\mathtt f_{13}(m)$,  $\mathtt f_{13}(mb)=m\mathtt q_{12}(b)+\mathtt f_{13}(m)b$.
\item $\mathtt g_{14}(na)=n\mathtt p_{11}(a)+\mathtt g_{14}(n)a$,  $\mathtt g_{14}(bn)=\mathtt q_{12}(b)n+b\mathtt g_{14}(n)$.
\item $\mathtt p_{11}(mn)=m\mathtt g_{14}(n)+\mathtt f_{13}(m)n$, $\mathtt q_{12}(nm)=\mathtt g_{14}(n)m+ n\mathtt f_{13}(m).$
%\item $g_{13}(m)=0$, $f_{14}(n)=0$, $p_{12}(b)=0$ and $q_{11}(a)=0$,
%\item $p_{13}(m)=-mn_1$, $q_{13}(m)=n_1m$, $p_{14}(n)=-m_1n$ and $q_{14}(n)=nm_1$.
\end{enumerate}
\end{proposition}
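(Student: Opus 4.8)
The plan is to run everything through the Peirce decomposition of $\mathcal{G}$ attached to the orthogonal idempotents $e_1=\left(\begin{smallmatrix}1_A&0\\0&0\end{smallmatrix}\right)$ and $e_2=1_{\mathcal{G}}-e_1=\left(\begin{smallmatrix}0&0\\0&1_B\end{smallmatrix}\right)$, under which $A,M,N,B$ are identified with the four corners $e_1\mathcal{G}e_1$, $e_1\mathcal{G}e_2$, $e_2\mathcal{G}e_1$, $e_2\mathcal{G}e_2$. The reverse implication in each bullet is then a routine (if lengthy) verification: substitute the displayed block form into $\K_1([x,y])=[\K_1x,y]+[x,\K_1y]$ (resp. the Leibniz rule for $\D_1$), expand both sides corner by corner, and check that properties (1)--(6) (resp. (a)--(d)) force agreement. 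I would dispatch this direction quickly and concentrate on the forward implication.

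For the forward implication I would first record the seed data: writing $\K_1(e_1)=\left(\begin{smallmatrix}\alpha&m_1\\ n_1&\beta\end{smallmatrix}\right)$ defines the fixed elements $m_1\in M$, $n_1\in N$ appearing in the statement. The engine of the proof is to feed the Lie identity the ``elementary'' brackets built from single corners, read off from the multiplication table of the Morita context: $[e_1,a]=0$, $[e_2,b]=0$, $[a,b]=0$, $[a,a']$, $[b,b']$, $[a,m]=am$, $[m,b]=mb$, $[n,a]=na$, $[b,n]=bn$, and the one genuinely mixed bracket $[m,n]=\left(\begin{smallmatrix}mn&0\\0&-nm\end{smallmatrix}\right)$. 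Each identity, after projecting $\K_1([x,y])=[\K_1x,y]+[x,\K_1y]$ onto the four corners, collapses to a handful of equations in the component maps of $\K_1$.

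Reading these off in order produces the whole statement. The bracket $[e_1,a]=0$ forces the $M$- and $N$-components of $\K_1(a)$ to be exactly $am_1$ and $n_1a$ and shows $\alpha\in Z(A)$; symmetrically $[e_2,b]=0$ yields the $-m_1b$ and $-bn_1$ entries together with $\beta\in Z(B)$, which is precisely how the inner-looking off-diagonal terms of the displayed matrix arise. The brackets $[a,a']$ and $[b,b']$ give, on the diagonal corners, that $\mathfrak p_{11}$ and $\mathfrak q_{12}$ are Lie derivations (1), and on the opposite corners that $\mathfrak q_{11}$ and $\mathfrak p_{12}$ annihilate commutators (2); the vanishing bracket $[a,b]=0$ then gives the centrality (3). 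The brackets $[a,m]$, $[m,b]$ and $[n,a]$, $[b,n]$ deliver the Leibniz-type identities (4) and (5) for $\mathfrak f_{13}$ and $\mathfrak g_{14}$. Finally the mixed bracket $[m,n]$ is the only one straddling both diagonal blocks: its two diagonal corners are $\mathfrak p_{11}(mn)-\mathfrak p_{12}(nm)$ and $\mathfrak q_{11}(mn)-\mathfrak q_{12}(nm)$, and matching these against the corresponding corners of $[\K_1m,n]+[m,\K_1n]$ reproduces the two identities in (6).

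The main obstacle is bookkeeping rather than ideas: one must keep the nine component maps of $\K_1$ and the fixed elements $m_1,n_1$ straight while projecting each bracket onto four corners, and it is the single coupled identity $[m,n]$ that ties the $A$- and $B$-behaviours together and must be treated last. The $2$-torsion-freeness of $A$ and $B$ enters at the standard point in such arguments, namely when separating symmetric from skew contributions on the diagonal blocks—passing from relations of the shape $2(\cdots)=0$ to $(\cdots)=0$—and is what cleanly splits the central maps $\mathfrak p_{12},\mathfrak q_{11}$ off from the Lie derivations $\mathfrak p_{11},\mathfrak q_{12}$. The derivation half is the same Peirce argument carried out with the products $e_ixe_j$ in place of commutators; it is strictly easier because no symmetrization is required, which is exactly why that bullet needs no torsion hypothesis, and it yields the degenerate relations (a)--(d) with the cross maps $\mathtt p_{12},\mathtt q_{11}$ absent.
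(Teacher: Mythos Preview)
The paper does not prove this proposition at all: it is quoted verbatim from \cite[Propositions~4.1, 4.2]{LW} and is used only as the base case $k=1$ in the inductive proofs of Theorems~\ref{LHD} and~\ref{HD}. So there is no ``paper's own proof'' to compare against; your sketch is an independent proof of the cited result, and the Peirce-decomposition strategy you outline is exactly the standard one (and is, in essence, what \cite{LW} does).

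One small correction on where the $2$-torsion hypothesis bites. It is not used to ``separate symmetric from skew contributions on the diagonal blocks''. Rather, it is needed to kill the wrong-corner components $\mathfrak g_{31}:M\to N$ and $\mathfrak f_{41}:N\to M$ of $\K_1$. For instance, feeding $[e_1,m]=m$ into the Lie identity and comparing $(2,1)$-entries gives $-\mathfrak g_{31}(m)=\mathfrak g_{31}(m)$, i.e.\ $2\mathfrak g_{31}(m)=0$; dually $[e_2,n]=-n$ yields $2\mathfrak f_{41}(n)=0$. This is precisely why the displayed form of $\K_1$ has no $\mathfrak g_{31}$ or $\mathfrak f_{41}$ term, and why the derivation bullet needs no such hypothesis (there one uses products $e_1m$, $me_2$ rather than commutators, and the factor of $2$ never appears). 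The diagonal Lie-derivation and centrality conclusions (1)--(3) come out of $[a,a']$, $[b,b']$, $[a,b]=0$ without any torsion assumption.
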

We are preparing to describe the structure of Lie higher and higher derivations on a generalized matrix algebra. We start with  fixing some notations which will be needed in the sequel.
\subsection{\color{SEC}Some notations}
Before we proceed for the result, for more convenience, we fix some notations. Throughout $\mathbb{N}$ stands for the natural numbers and $\mathbb{N}_0:=\mathbb{N}\cup\{0\}$. For each $k\in\mathbb N,$ we define $\eta_k$ and $\nu_k$ by
\[\eta_k:=\left\lbrace\begin{array}{ccc}
(k-1)/2 & ; & k \mbox{ is odd}\\
k/2 & ; & k \mbox{ is even}
\end{array}\right.\ {\rm and}\quad  \nu_k:=\left\lbrace\begin{array}{ccc}
(k-1)/2& ; & k  \mbox{ is odd}\\
k/2-1 & ; & k  \mbox{ is even}
\end{array}\right..\]

Let $k\in\mathbb N$, $\alpha_i, \beta_i \in\{1, 2, \cdots, k\}, (1\leq i\leq r)$ and $n_{\alpha_i}\in N, m_{\beta_i}\in M$ we define
\begin{enumerate}[\hspace{1em}]
 \item $(\alpha+\beta)_r:=\sum_{i=1}^r(\alpha_i+\beta_i),$
 \item $(n_{\alpha}m_{\beta})^r:=n_{\alpha_2}m_{\beta_2}\ldots n_{\alpha_r}m_{\beta_r},$ and $(n_{\alpha}m_{\beta})_r:=n_{\alpha_r}m_{\beta_r}\ldots n_{\alpha_2}m_{\beta_2},$
 \item $(m_{\beta}n_{\alpha})^r:=m_{\beta_2}n_{\alpha_2}\ldots m_{\beta_r}n_{\alpha_r},$ and $(m_{\beta}n_{\alpha})_r:=m_{\beta_r}n_{\alpha_r}\ldots m_{\beta_2}n_{\alpha_2}.$
 \end{enumerate}
We also define $\NN_k$ and $\M_k$ so that $\NN_1:=n_1,\ \M_1:=m_1,\ \NN_2:=n_2,\ \M_2:=m_2$ and for each $k\geq 3,$ by
\[\NN_k=\sum_{r=1}^{\nu_k}\sum_{(\alpha+\beta)_r+\gamma=k}(\prod_{\rho=1}^rn_{\alpha_{\rho}}m_{\beta_{\rho}}n_{\gamma})+n_k,\]
\[\M_k=\sum_{r=1}^{\nu_k}\sum_{(\alpha+\beta)_r+\gamma=k}(\prod_{{\rho}=1}^r m_{\beta_{\rho}}n_{\alpha_{\rho}}m_{\gamma})+m_k.\]
For example, for $k=3$ and $k=4$ we have:
\begin{enumerate}[\hspace{1em}]
\item $\NN_3=n_1m_1n_1+n_3,\, \M_3=m_1n_1m_1+m_3.$
\item $\NN_4=n_1m_1n_2+n_1m_2n_1+n_2m_1n_1+n_4,\,\M_4=m_1n_1m_2+m_1n_2m_1+m_2n_1m_1+m_4.$
\end{enumerate}

\subsection{\color{SEC}The structure of Lie higher derivations on $\mathcal{G}$}
It is easy to check that every sequence $\{\K_k\}_{k\in\mathbb N_0}$ of linear mappings on a generalized matrix algebra $\mathcal{G}=\left(\begin{array}{cc}
A & M \\
N & B \\
\end{array}\right)$ enjoys the presentation
{\small \[\K_k\left(\begin{array}{cc}\tag{$\textcolor[rgb]{0.0,0.0,0.0}\bigstar$}
a & m\\
n & b\\
\end{array}\right)=\left(\begin{array}{cc}
{\mathfrak p}_{1k}(a)+{\mathfrak p}_{2k}(b)+{\mathfrak p}_{3k}(m)+{\mathfrak p}_{4k}(n)& {\mathfrak f}_{1k}(a)+{\mathfrak f}_{2k}(b)+{\mathfrak f}_{3k}(m)+{\mathfrak f}_{4k}(n)\\
{\mathfrak g}_{1k}(a)+{\mathfrak g}_{2k}(b)+{\mathfrak g}_{3k}(m)+{\mathfrak g}_{4k}(n) & {\mathfrak q}_{1k}(a)+{\mathfrak q}_{2k}(b)+{\mathfrak q}_{3k}(m)+{\mathfrak q}_{4k}(n)\\
\end{array}\right)\]}
for each $k\in \mathbb{N}_0$, where the entries mappings
$\mathfrak p_{1k}:A\longrightarrow A,\ \mathfrak p_{2k}:B\longrightarrow A,\ \mathfrak p_{3k}:M\longrightarrow A,\ \mathfrak p_{4k}:N\longrightarrow A,\ \mathfrak q_{1k}:A\longrightarrow B,\ \mathfrak q_{2k}:B\longrightarrow B,\ \mathfrak q_{3k}:M\longrightarrow B,\ \mathfrak q_{4k}:N\longrightarrow B,\ \mathfrak f_{1k}:A\longrightarrow M,\ \mathfrak f_{2k}:B\longrightarrow M,\ \mathfrak f_{3k}:M\longrightarrow M,\ \mathfrak f_{4k}:N\longrightarrow M,\ \mathfrak g_{1k}:A\longrightarrow N,\ \mathfrak g_{2k}:B\longrightarrow N,\ \mathfrak g_{3k}:M\longrightarrow N$ and $ \mathfrak g_{4k}:N\longrightarrow N$ are linear. For each $k\in\mathbb N$ we set $m_k:=\mathfrak f_{1k}(1)$ and 
$n_k:=\mathfrak g_{1k}(1).$

Before proceeding for the structure of Lie higher derivations on $\mathcal{G}$ in Theorem \ref{LHD}, we need to fix some more notations and also define some auxiliary mappings by setting $P_0=P'_0:=id_{A}, P''_0:=0$, $Q_0=Q'_0:=id_{B}, Q''_0:=0$ and $p_1=p'_1=q''_1:=0, q_1=q'_1=p''_1:=0$.
Further for each $k\in\mathbb N,$ we set $P_k:=\mathfrak p_{1k}+p_k,\ P'_k:=\mathfrak p_{1k}+p'_k,\ P''_k:=\mathfrak p_{2k}-p''_k,\ Q_k:=\mathfrak q_{2k}+q_k,\ Q'_k:=\mathfrak q_{2k}+q'_k$ and $Q''_k:=\mathfrak q_{1k}-q''_k$ where for each $k\geq 2$
\begin{eqnarray*}
p_k(a)&:=&\sum_{r=1}^{\eta_k}\sum_{i+(\alpha+\beta)_r=k,\, i\leq k-2}(P_i(a)m_{\beta_1}-m_{\beta_1}Q''_i(a))n_{\alpha_1}(m_{\beta}n_{\alpha})^r,\\
p'_k(a)&:=&\sum_{r=1}^{\eta_k}\sum_{i+(\alpha+\beta)_r=k,\, i\leq k-2}(m_{\beta}n_{\alpha})_r m_{\beta_1}(n_{\alpha_1}P'_i(a)-Q''_i(a)n_{\alpha_1}),\\
p''_k(b)&:=&\sum_{r=1}^{\eta_k}\sum_{i+(\alpha+\beta)_r=k,\, i\leq k-2}(m_{\beta_1}Q_i(b)-P''_i(b)m_{\beta_1})n_{\alpha_1}(m_{\beta}n_{\alpha})^r,\\
&=&\sum_{r=1}^{\eta_k}\sum_{i+(\alpha+\beta)_r=k,\, i\leq k-2}(m_{\beta}n_{\alpha})_r m_{\beta_1}(Q'_i(b)n_{\alpha_1}-n_{\alpha_1}P''_i(b)),\\
q_k(b)&:=&\sum_{r=1}^{\eta_k}\sum_{i+(\alpha+\beta)_r=k,\, i\leq k-2}(n_{\alpha}m_{\beta})_r n_{\alpha_1}(m_{\beta_1}Q_i(b)-P''_i(b)m_{\beta_1}),\\
q'_k(b)&:=&\sum_{r=1}^{\eta_k}\sum_{i+(\alpha+\beta)_r=k,\, i\leq k-2} (Q'_i(b)n_{\alpha_1}-n_{\alpha_1}P''_i(b))m_{\beta_1}(n_{\alpha}m_{\beta})^r,\\
q''_k(a)&:=&\sum_{r=1}^{\eta_k}\sum_{i+(\alpha+\beta)_r=k,\, i\leq k-2}(n_{\alpha}m_{\beta})_r n_{\alpha_1}(P_i(a)m_{\beta_1}-m_{\beta_1}Q''_i(a))\\
&=&\sum_{r=1}^{\eta_k}\sum_{i+(\alpha+\beta)_r=k,\, i\leq k-2}(n_{\alpha_1}P'_i(a)-Q''_i(a)n_{\alpha_1})m_{\beta_1}(n_{\alpha}m_{\beta})^r.
\end{eqnarray*}
For example; for  $k=2$ we get;
\[p_2(a)=am_1n_1,\ p'_2(a)=m_1n_1a,\ p''_2(b)=m_1bn_1,\ q_2(b)=n_1m_1b,\ q'_2(b)=bn_1m_1\ and \ q''_2(a)=n_1am_1.\]
Similarly, for $k=3$ one can check that
\begin{eqnarray*}
p_3(a)&=&am_1n_2+am_2n_1+P_1(a)m_1n_1-m_1Q''_1(a)n_1\\
p'_3(a)&=&m_1n_2a+m_2n_1a+m_1n_1P_1(a)-m_1Q''_1(a)n_1,\\
p''_3(b)&=&m_1bn_2+m_2bn_1+m_1Q_1(b)n_1-P''_1(b)m_1n_1\\
&=&m_1bn_2+m_2bn_1+m_1Q'_1(b)n_1-m_1n_1P''_1(b),\\
q_3(b)&=&n_1m_2b+n_2m_1b+n_1m_1Q_1(b)-n_1P''_1(b)m_1\\
q'_3(b)&=&bn_1m_2+bn_2m_1+Q_1(b)n_1m_1-n_1P''_1(b)m_1\\
q''_3(a)&=&n_1am_2+n_2am_1+n_1P_1(a)m_1-n_1m_1Q''_1(a)\\
&=&n_1am_2+n_2am_1+n_1P'_1(a)m_1-Q''_1(a)n_1m_1.
\end{eqnarray*}

Now, we are ready to present the following result describing the structure of Lie higher derivations on the generalized matrix algebra $\mathcal{G}$.
\begin{theorem}\label{LHD}
Let $\mathcal{G}=\left(\begin{array}{cc}
A & M \\
N & B \\
\end{array}\right)$ be a generalized matrix algebra such that $A, B$ are $2$-torsion free. Then a sequence $\K=\{\K_k\}_{k\in\mathbb{N}_0}:\mathcal{G}\longrightarrow\mathcal{G}$ of linear mappings (as presented in ($\bigstar$)) is a Lie higher derivation if and only if
\begin{enumerate}[\hspace{1em}\rm (1)]
\item $\{P_k\}_{k\in\mathbb{N}_0}, \{P'_k\}_{k\in\mathbb{N}_0}$ are Lie higher derivations on $A$,  $\{Q_k\}_{k\in\mathbb{N}_0}, \{Q'_k\}_{k\in\mathbb{N}_0}$ are Lie higher derivations on $B,$  $Q''_k(A)\subseteq Z(B), P''_k(B)\subseteq Z(A),$ and $Q''_k([a,a'])=0, P''_k([b,b'])=0$, for all $k\in\mathbb N.$ \\
\item $\mathfrak {g_1}_k(a)=\sum_{i+j=k,\, j\neq k}(n_iP'_j(a)-Q''_j(a)n_i)$ and $\mathfrak f_{1k}(a)=\sum_{i+j=k,\, j\neq k}(P_j(a)m_i-m_iQ''_j(a))$,\\
\item $\mathfrak {g_2}_k(b)=-\sum_{i+j=k,\, j\neq k}(Q'_j(b)n_i-n_iP''_j(b))$ and $\mathfrak f_{2k}(b)=-\sum_{i+j=k,\, j\neq k}(m_iQ_j(b)-P''_j(b)m_i)$,\\
\item $\mathfrak f_{3k}(am)=\sum_{i+j=k}\big(P_i(a)\mathfrak f_{3j}(m)-\mathfrak f_{3j}(m)Q''_i(a)\big)$,\\
\item $\mathfrak f_{3k}(mb)=\sum_{i+j=k}\big(\mathfrak f_{3j}(m)Q_i(b)-P''_i(b)\mathfrak f_{3j}(m)\big)$,\\
\item $\mathfrak g_{4k}(na)=\sum_{i+j=k}\big(\mathfrak g_{4j}(n)P'_i(a)-Q''_i(a)\mathfrak g_{4j}(n)\big)$,\\
\item $\mathfrak g_{4k}(bn)=\sum_{i+j=k}\big(Q'_i(b)\mathfrak g_{4j}(n)-\mathfrak g_{4j}(n)P''_i(b)\big)$,\\
\item $\mathfrak g_{3k}(m)=-\sum_{i+j+r=k}\NN_i\mathfrak f_{3r}(m)\NN_j$ and $\mathfrak f_{4k}(n)=-\sum_{i+j+r=k}\M_i\mathfrak g_{4r}(n)\M_j$,\\
\item $\mathfrak p_{3k}(m)=\sum_{i+j=k}-\mathfrak f_{3i}(m)\NN_j$ and $\mathfrak q_{3k}(m)=\sum_{i+j=k}\NN_j\mathfrak f_{3i}(m)$,\\
\item $\mathfrak p_{4k}(n)=\sum_{i+j=k}-\M_j\mathfrak g_{4i}(n)$ and $\mathfrak q_{4k}(n)=\sum_{i+j=k}\mathfrak g_{4i}(n)\M_j$,\\
\item $\mathfrak p_{1k}(mn)-\mathfrak p_{2k}(nm)=\sum_{i+j=k}\big(\mathfrak p_{3i}(m)\mathfrak p_{4j}(n) +\mathfrak f_{3i}(m)\mathfrak g_{4j}(n)-\mathfrak p_{4j}(n)\mathfrak p_{3i}(m)-\mathfrak f_{4j}(n)\mathfrak g_{3i}(m)\big),$\\
\item $\mathfrak q_{1k}(mn)-\mathfrak q_{2k}(nm)=\sum_{i+j=k}\big(\mathfrak g_{3i}(m)\mathfrak f_{4j}(n)+\mathfrak q_{3i}(m)\mathfrak q_{4j}(n)-\mathfrak g_{4j}(n)\mathfrak f_{3i}(m)-\mathfrak q_{4j}(n)\mathfrak q_{3i}(m)\big).$
\end{enumerate}
\end{theorem}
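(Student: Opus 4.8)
The plan is to prove both implications by a single induction on $k$, reducing every computation to the four natural generators of $\mathcal{G}$. Because each side of the defining identity $\K_k([X,Y])=\sum_{i+j=k}[\K_i(X),\K_j(Y)]$ is bilinear in $(X,Y)$, it is enough to test it when $X$ and $Y$ range over the diagonal elements $a\oplus 0$, $0\oplus b$ and the off-diagonal elements $\bar m:=\left(\begin{smallmatrix}0&m\\0&0\end{smallmatrix}\right)$, $\bar n:=\left(\begin{smallmatrix}0&0\\n&0\end{smallmatrix}\right)$. The elementary products
\[
[a\oplus 0,\bar m]=\overline{am},\qquad [0\oplus b,\bar m]=-\overline{mb},\qquad [a\oplus 0,\bar n]=-\overline{na},\qquad [0\oplus b,\bar n]=\overline{bn},
\]
\[
[a\oplus 0,a'\oplus 0]=[a,a']\oplus 0,\qquad [\bar m,\bar n]=(mn)\oplus(-nm),\qquad [\bar m,\bar m']=[\bar n,\bar n']=[a\oplus 0,0\oplus b]=0,
\]
together with the component presentation $(\bigstar)$, turn each instance of the identity into four block equations valued in $A$, $M$, $N$, $B$; the left-hand sides carry the unknown level-$k$ maps, while the right-hand sides, by the inductive hypothesis, are already written through the lower-level data $m_i,n_j,P_i,Q_i,\ldots$ The base cases $k=0$ ($\K_0=\mathrm{id}$) and $k=1$ (Proposition \ref{k=1}) are immediate.

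I would first harvest the ``local'' conditions (2)--(12), each of which expresses a level-$k$ component map through lower-level data. Reading the $M$-block of the identity for $X=a\oplus 0$, $Y=\bar m$ gives (4), and the symmetric choices $X=0\oplus b$ or $Y=\bar n$ give (5)--(7). The off-diagonal blocks of the identity on two diagonal generators, specialized against the idempotents $1\oplus 0$, $0\oplus 1$ and combined with $m_i=\mathfrak f_{1i}(1)$, $n_i=\mathfrak g_{1i}(1)$, deliver the explicit formulas (2), (3); pairing $\bar m,\bar n$ against the same idempotents yields (9), (10). The pair $X=\bar m$, $Y=\bar n$ is the richest: its off-diagonal blocks force (8), while its two diagonal blocks give (11), (12), these being $\K_k$ evaluated on the $A$- and $B$-parts of $(mn)\oplus(-nm)$. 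Throughout, the word-operators $\NN_i,\M_i$ enter exactly as the accumulated off-diagonal contributions of the lower-level maps, so each condition is read off once the right-hand side is simplified by the induction hypothesis.

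The delicate point is condition (1). The naive candidates $\mathfrak p_{1k}$ and $\mathfrak q_{2k}$ are \emph{not} themselves Lie higher derivations, because the diagonal block of $[\bar m,\bar n]$ and of its iterated analogues leaks $MN$- and $NM$-terms into $\K_k(a\oplus 0)$ and $\K_k(0\oplus b)$. The auxiliary maps $p_k,p'_k,p''_k,q_k,q'_k,q''_k$ are engineered to absorb precisely this leakage, so that the corrected families $P_k=\mathfrak p_{1k}+p_k$ and $P'_k=\mathfrak p_{1k}+p'_k$ (organizing the module action from the left, respectively the right) and likewise $Q_k,Q'_k$ become genuine Lie higher derivations, whereas $Q''_k=\mathfrak q_{1k}-q''_k$ and $P''_k=\mathfrak p_{2k}-p''_k$ become center-valued and vanish on commutators. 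Establishing this is the crux: evaluating the identity on $X=a\oplus 0$, $Y=a'\oplus 0$ yields, in the $A$-block, a raw relation for $\mathfrak p_{1k}([a,a'])$ plus a sum of cross terms $\mathfrak f_{1i}(a)\mathfrak g_{1j}(a')-\mathfrak f_{1j}(a')\mathfrak g_{1i}(a)$; after substituting the formulas (2) for $\mathfrak f_{1\bullet},\mathfrak g_{1\bullet}$ and reorganizing via the recursions defining $\NN_k,\M_k$ and $p_k,p'_k$, one must show these cross terms telescope so as to upgrade the raw relation into the clean identity $P_k([a,a'])=\sum_{i+j=k}[P_i(a),P_j(a')]$. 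As in Proposition \ref{k=1}, the $2$-torsion freeness of $A$ and $B$ is invoked to separate the Lie-derivation piece from the center-valued piece and to identify the central parts.

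The converse implication is run along the same generators: assuming (1)--(12), I would substitute the prescribed formulas into $\sum_{i+j=k}[\K_i(X),\K_j(Y)]$ for each generator pair and verify that it collapses to $\K_k([X,Y])$, which is the bookkeeping of the forward direction read backwards. The main obstacle is thus not any single block computation but the purely combinatorial task of controlling the nested sums $p_k,q_k,\ldots$ and the word-operators $\NN_k,\M_k$ uniformly in $k$, and of justifying the telescoping rearrangement behind condition (1). I expect this verification to be the longest and most error-prone part of the proof, which is why it is natural to isolate it in a dedicated section.
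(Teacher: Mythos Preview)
Your plan---induction on $k$ with base case Proposition~\ref{k=1}, testing the Lie higher derivation identity on the four block generators, and reading off conditions (1)--(12) from the four block entries---is exactly the paper's approach, and the organisation of the combinatorics around $p_k,p'_k,\dots$ and $\NN_k,\M_k$ is the right one.

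Two misattributions, however, need correcting. First, condition (8) does \emph{not} come from the pair $(\bar m,\bar n)$: in the $N$-block of $\sum_{i+j=k}[\K_i(\bar m),\K_j(\bar n)]$ the unknown $\mathfrak g_{3k}(m)$ never appears (the boundary terms $i=k$ and $j=0$ contribute only $\mathfrak q_{3k}(m)n-n\mathfrak p_{3k}(m)$ and $0$, respectively), so you cannot solve for it there. In the paper (8) is obtained from $[\bar m,0\oplus 1]=\bar m$, whose $N$-block gives $\mathfrak g_{3k}(m)$ on the left and $-\mathfrak g_{3k}(m)+\text{(lower terms)}$ on the right; this yields $2\mathfrak g_{3k}(m)=\cdots$, and it is precisely here---not in the analysis of (1)---that $2$-torsion freeness is invoked to cancel the factor $2$ (and symmetrically for $\mathfrak f_{4k}(n)$ via $[\bar n,1\oplus 0]$). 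Second, your description of the role of $2$-torsion freeness in (1) is off: the centrality of $Q''_k(a)$ and the vanishing of $Q''_k([a,a'])$ come out of the $(2,2)$-block of $[a\oplus 0,0\oplus b]=0$ and of $[0\oplus b,0\oplus b']$ without any torsion hypothesis, after the long telescoping you correctly anticipate. Once you reroute (8) and relocate the torsion argument, your sketch coincides with the paper's proof.
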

\subsection{\color{SEC}The structure of higher derivations on $\mathcal{G}$}
Similar to $(\bigstar)$ let
$\D=\{\D_k\}_{k\in\mathbb{N}_0}:\mathcal{G}\longrightarrow\mathcal{G}$ be a sequence of linear mappings with the following presentation
\[{\small\D_k\left(\begin{array}{cc}\tag{$\textcolor[rgb]{0.0,0.0,0.0}\clubsuit$}
a & m\\
n & b \\
\end{array}\right)=\left(\begin{array}{cc}
\mathtt p_{1k}(a)+\mathtt p_{2k}(b)+\mathtt p_{3k}(m)+\mathtt p_{4k}(n) & \mathtt f_{1k}(a)+\mathtt f_{2k}(b)+\mathtt f_{3k}(m)+\mathtt f_{4k}(n)\\
\mathtt g_{1k}(a)+\mathtt g_{2k}(b)+\mathtt g_{3k}(m)+\mathtt g_{4k}(n) & \mathtt q_{1k}(a)+\mathtt q_{2k}(b)+\mathtt q_{3k}(m)+\mathtt q_{4k}(n)\\
\end{array}\right),}\]
whose entries maps are linear. As before for each $k\in\mathbb N_0$ we set $m_k:=\mathtt f_{1k}(1)$ and $n_k:=\mathtt g_{1k}(1).$

Before proceeding for the structure of higher derivations on $\mathcal{G}$ in Theorem \ref{HD}, we need to fix some more notations and also define some auxiliary mappings by setting ${\mathsf P}_0={\mathsf P}'_0:=id_{A}$, ${\mathsf Q}_0={\mathsf Q}'_0:=id_{B}$, and ${\mathsf p}_1={\mathsf p}'_1:=0$, ${\mathsf q}_1={\mathsf q}'_1:=0.$ Further for each $k\in\mathbb N,$ we set ${\mathsf P}_k:=\mathtt p_{1k}+{\mathsf p}_k$, ${\mathsf P}'_k:=\mathtt p_{1k}+{\mathsf p}'_k$,
${\mathsf Q}_k:=\mathtt q_{2k}+{\mathsf q}_k$, ${\mathsf Q}'_k:=\mathtt q_{2k}+{\mathsf q}'_k,$ where for for each $k\geq 2$
\begin{eqnarray*}
{\mathsf p}_k(a)&:=&\sum_{r=1}^{\eta_k}\sum_{i+(\alpha+\beta)_r=k,\, i\leq k-2}{\mathsf P}_i(a)m_{\beta_1}n_{\alpha_1}(m_{\beta}n_{\alpha})^r,\\
{\mathsf p}'_k(a)&:=&\sum_{r=1}^{\eta_k}\sum_{i+(\alpha+\beta)_r=k,\, i\leq k-2}m_{\beta_1}n_{\alpha_1}(m_{\beta}n_{\alpha})^r{\mathsf P}'_i(a),\\
{\mathsf q}_k(b)&:=&\sum_{r=1}^{\eta_k}\sum_{i+(\alpha+\beta)_r=k,\, i\leq k-2}(n_{\alpha}m_{\beta})_r n_{\alpha_1}m_{\beta_1}{\mathsf Q}_i(b),\\
{\mathsf q}'_k(b)&:=&\sum_{r=1}^{\eta_k}\sum_{i+(\alpha+\beta)_r=k,\, i\leq k-2}{\mathsf Q}'_i(b)(n_{\alpha}m_{\beta})_r n_{\alpha_1}m_{\beta_1}.\\
\end{eqnarray*}
In particular for the case $k=2$ we get
\[ {\mathsf p}_2(a)=am_1n_1,\ {\mathsf p}'_2(a)=m_1n_1a,\ {\mathsf q}_2(b)=n_1m_1b\ and\ {\mathsf q}'_2(b)=bn_1m_1.\]
Similarly for the case $k=3$ it is easy to check that
\begin{eqnarray*}
{\mathsf p}_3(a)&=&am_1n_2+am_2n_1+{\mathsf P}_1(a)m_1n_1,\\
{\mathsf p}'_3(a)&=&m_1n_2a+m_2n_1a+m_1n_1{\mathsf P}'_1(a),\\
{\mathsf q}_3(b)&=&n_1m_2b+n_2m_1b+n_1m_1{\mathsf Q}_1(b),\\
{\mathsf q}'_3(b)&=&bn_1m_2+bn_2m_1+{\mathsf Q}'_1(b)n_1m_1.
\end{eqnarray*}
Parallel to Theorem \ref{LHD}, in the following result we characterize the structure of higher derivations on the generalized matrix algebra $\mathcal G$.
\begin{theorem}\label{HD}
Let $\mathcal{G}=\left(\begin{array}{cc}
A & M \\
N & B \\
\end{array}\right)$ be a generalized matrix algebra. Then a sequence $\D=\{\D_k\}_{k\in\mathbb{N}_0}:\mathcal{G}\longrightarrow\mathcal{G}$ of linear mappings (as presented in $\clubsuit$) is a higher derivation if and only if
\begin{enumerate}[\hspace{1em}\rm (a)]
\item $\{{\mathsf P}_k\}_{k\in\mathbb{N}_0}, \{{\mathsf P}'_k\}_{k\in\mathbb{N}_0}$ are higher derivations on $A$ and $\{{\mathsf Q}_k\}_{k\in\mathbb{N}_0}, \{{\mathsf Q}'_k\}_{k\in\mathbb{N}_0}$ are higher derivations on $B.$\\
\item $\mathtt g_{1k}(a)=\sum_{i+j=k, j\neq k}n_i{\mathsf P}'_j(a)$ and $\mathtt f_{1k}(a)=\sum_{i+j=k, j\neq k}{\mathsf P}_j(a)m_i$,\\
\item $\mathtt g_{2k}(b)=-\sum_{i+j=k, j\neq k}{\mathsf Q}'_j(b)n_i$ and $\mathtt f_{2k}(b)=-\sum_{i+j=k, j\neq k}m_i{\mathsf Q}_j(b)$,\\
\item $\mathtt f_{3k}(am)=\sum_{i+j=k}{\mathsf P}_i(a)\mathtt f_{3j}(m)$ and $\mathtt f_{3k}(mb)=\sum_{i+j=k}\mathtt f_{3j}(m){\mathsf Q}_i(b)$,\\
\item $\mathtt g_{4k}(na)=\sum_{i+j=k}\mathtt g_{4j}(n){\mathsf P}'_i(a)$ and $\mathtt g_{4k}(bn)=\sum_{i+j=k}{\mathsf Q}'_i(b)\mathtt g_{4j}(n)$,\\
\item $\mathtt g_{3k}(m)=-\sum_{i+j+r=k}\NN_i\mathtt f_{3r}(m)\NN_j$ and $\mathtt f_{4k}(n)=-\sum_{i+j+r=k}\M_i\mathtt g_{4r}(n)\M_j$,\\
\item $\mathtt p_{3k}(m)=-\sum_{i+j=k}\mathtt f_{3i}(m)\NN_j$ and $\mathtt q_{3k}(m)=\sum_{i+j=k}\NN_j\mathtt f_{3i}(m)$,\\
\item $\mathtt p_{4k}(n)=-\sum_{i+j=k}\M_j\mathtt g_{4i}(n)$ and $\mathtt q_{4k}(n)=\sum_{i+j=k}\mathtt g_{4i}(n)\M_j$,\\
\item $\mathtt q_{1k}(a)=\sum_{r=1}^{\eta_k}\sum_{i+(\alpha+\beta)_r=k,\, i\leq k-2}(n_{\alpha}m_{\beta})_r n_{\alpha_1}{\mathsf P}_i(a)m_{\beta_1}\\
\hspace*{1.1cm}=\sum_{r=1}^{\eta_k}\sum_{i+(\alpha+\beta)_r=k,\, i\leq k-2}n_{\alpha_1}{\mathsf P}'_i(a)m_{\beta_1}(n_{\alpha}m_{\beta})^r,$\\
\item $\mathtt p_{2k}(b)=\sum_{r=1}^{\eta_k}\sum_{i+(\alpha+\beta)_r=k,\, i\leq k-2}m_{\beta_1}{\mathsf Q}_i(b)n_{\alpha_1}(m_{\beta}n_{\alpha})^r\\
\hspace*{1.07cm}=\sum_{r=1}^{\eta_k}\sum_{i+(\alpha+\beta)_r=k,\, i\leq k-2}(m_{\beta}n_{\alpha})_r m_{\beta_1}{\mathsf Q}'_i(b)n_{\alpha_1},$\\
\item $\mathtt p_{1k}(mn)=\sum_{i+j=k}\big(\mathtt p_{3i}(m)\mathtt p_{4j}(n)+\mathtt f_{3i}(m)\mathtt g_{4j}(n)\big),$ \\
\item $\mathtt q_{1k}(mn)=\sum_{i+j=k}\big(\mathtt g_{3i}(m)\mathtt  f_{4j}(n)+\mathtt q_{3i}(m) \mathtt q_{4j}(n)\big),$\\
\item $\mathtt q_{2k}(nm)=\sum_{i+j=k}\big(\mathtt g_{4i}(n)\mathtt  f_{3j}(m)+\mathtt q_{4i}(n) \mathtt q_{3j}(m)\big),$\\
\item $\mathtt p_{2k}(nm)=\sum_{i+j=k}\big(\mathtt p_{4i}(n)\mathtt p_{3j}(m)+\mathtt f_{4i}(n)\mathtt g_{3j}(m)\big).$
\end{enumerate}
\end{theorem}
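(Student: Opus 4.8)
The plan is to prove both implications by a single strong induction on $k$, expanding the defining identity $\D_k(xy)=\sum_{i+j=k}\D_i(x)\D_j(y)$ on the four ``corner'' elements of $\mathcal G$. Write $\tilde a,\tilde b,\tilde m,\tilde n$ for the elements of $\mathcal G$ whose unique nonzero entry is $a,b,m,n$ in positions $(1,1),(2,2),(1,2),(2,1)$ respectively, and put $e=\tilde 1_A$, $f=\tilde 1_B$. Since $\D_0=\mathrm{id}$ and, by the derivation part of Proposition \ref{k=1}, the case $k=1$ reduces exactly to conditions (a)--(n) at level $1$ (where $\mathsf P_1=\mathtt p_{11}$, $\mathsf Q_1=\mathtt q_{12}$ and all auxiliary sums are empty), these serve as the base of the induction; the inductive step carries the combinatorial content.

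\emph{Forward direction.} Assuming $\D$ is a higher derivation, I first expand $\D_k(e)$ using $e^2=e$ and $\D_k(f)$ using $f^2=f$, reading off the four blocks. This fixes the action on the idempotents: it produces the recursions for $m_k=\mathtt f_{1k}(1)$ and $n_k=\mathtt g_{1k}(1)$ and pins down $\mathtt p_{1k}(1)$, $\mathtt q_{1k}(1)$ as the lower-order products of $m_i,n_j$ occurring in the definitions of $\mathsf p_k,\mathsf q_k$. Next I expand $\D_k(\tilde a)=\D_k(e\tilde a)=\D_k(\tilde a e)$ and $\D_k(\tilde b)$: the off-diagonal blocks yield the formulas (b), (c) for $\mathtt f_{1k},\mathtt g_{1k},\mathtt f_{2k},\mathtt g_{2k}$ and the formulas (h), (i) for $\mathtt q_{1k},\mathtt p_{2k}$ (the two displayed expressions arising from the two factorizations $e\tilde a$ and $\tilde a e$, which must be reconciled), while the diagonal blocks define $\mathsf P_k,\mathsf P'_k,\mathsf Q_k,\mathsf Q'_k$ through the correction maps $\mathsf p_k,\mathsf p'_k,\mathsf q_k,\mathsf q'_k$. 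Expanding the module products $\D_k(\tilde a\tilde m)$, $\D_k(\tilde m\tilde b)$, $\D_k(\tilde n\tilde a)$, $\D_k(\tilde b\tilde n)$ gives the compatibility identities (d), (e) in the $M,N$ blocks and the identities (f), (g) (together with the definitions of $\NN_k,\M_k$) in the $A,B$ blocks; expanding $\D_k(\tilde m\tilde n)$ and $\D_k(\tilde n\tilde m)$ gives the trace-type identities (k)--(n); and the vanishing products $\tilde a\tilde n=\tilde m\tilde a=\tilde b\tilde m=\tilde n\tilde b=\tilde a\tilde b=\tilde m\tilde m'=\tilde n\tilde n'=0$ supply the remaining consistency relations and confirm the alternative forms in (h)--(j). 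Finally, substituting these block formulas into the expansion of $\D_k(\widetilde{aa'})$ and invoking the induction hypothesis on the correction terms makes the relation collapse to $\mathsf P_k(aa')=\sum_{i+j=k}\mathsf P_i(a)\mathsf P_j(a')$, and likewise for $\mathsf P',\mathsf Q,\mathsf Q'$, which is condition (a).

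\emph{Backward direction.} Conversely, given (a)--(n), I would run the computation in reverse: for arbitrary $x=\left(\begin{smallmatrix}a&m\\ n&b\end{smallmatrix}\right)$ and $y=\left(\begin{smallmatrix}a'&m'\\ n'&b'\end{smallmatrix}\right)$, expand $\sum_{i+j=k}\D_i(x)\D_j(y)$ by bilinearity into the sixteen corner products, and verify entry-by-entry that each of the four blocks of this sum matches the corresponding block of $\D_k(xy)$, using (a)--(n) term by term. This direction is essentially mechanical once the forward identities are available, the only care being the reassembly of the $\NN_k,\M_k$ sums out of the products $\mathtt p_{3i},\mathtt p_{4i},\mathtt q_{3i},\mathtt q_{4i}$.

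The main obstacle will be the combinatorial bookkeeping of the recursively defined auxiliary data: the correction maps $\mathsf p_k,\mathsf p'_k,\mathsf q_k,\mathsf q'_k$ and the composite elements $\NN_k,\M_k$ are nested sums indexed by compositions $i+(\alpha+\beta)_r=k$, and the inductive step must show that, after substituting the level-$<k$ formulas, the telescoping of these sums reproduces exactly the stated closed forms. In particular, verifying that the two displayed expressions for $\mathtt q_{1k}(a)$ (resp. $\mathtt p_{2k}(b)$) coincide, and that the recursion forces precisely $\NN_k=\sum_{r}\sum_{(\alpha+\beta)_r+\gamma=k}\big(\prod_{\rho}n_{\alpha_\rho}m_{\beta_\rho}n_\gamma\big)+n_k$ (and symmetrically for $\M_k$), is where the delicate index matching occurs; note that no $2$-torsion hypothesis is needed here, in contrast with Theorem \ref{LHD}.
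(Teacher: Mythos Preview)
Your proposal is correct and follows essentially the same strategy as the paper: strong induction on $k$ with the base case supplied by Proposition~\ref{k=1}, and the inductive step carried out by substituting specific ``corner'' products into the identity $\D_k(xy)=\sum_{i+j=k}\D_i(x)\D_j(y)$ and reading off the four blocks. The paper's element choices (e.g.\ the product $\left(\begin{smallmatrix}a&m\\0&0\end{smallmatrix}\right)\left(\begin{smallmatrix}0&0\\0&b\end{smallmatrix}\right)$, then $\tilde b\tilde b'$, then $\tilde m\tilde n$) differ only cosmetically from your list $e^2=e$, $e\tilde a=\tilde a=\tilde a e$, $\tilde a\tilde m$, $\tilde m\tilde n$, $\widetilde{aa'}$, and the telescoping of the $\mathsf p_k,\mathsf q_k,\NN_k,\M_k$ sums that you flag as the main obstacle is exactly where the paper spends its effort.

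There is one organizational difference worth noting. The paper first observes that every higher derivation is a Lie higher derivation and invokes Theorem~\ref{LHD} to obtain items (f), (g), (h) for free (these coincide with items (8), (9), (10) of the Lie version), proving only the remaining items directly. Your plan instead derives all items from scratch via products. This is slightly longer but cleaner: Theorem~\ref{LHD} as stated carries a $2$-torsion-freeness hypothesis (used precisely in deriving the $\mathfrak g_{3k}$, $\mathfrak f_{4k}$ formulas from a commutator), whereas Theorem~\ref{HD} does not, and your direct approach via products such as $e\tilde m=\tilde m$ and $\tilde m f=\tilde m$ makes it transparent that no such hypothesis is needed here --- exactly as you remark at the end.
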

\subsection{\color{SEC} The center valued mappings on $\mathcal G$} 
In the next result we characterize the structure of center valued maps on $\mathcal G$ vanishing at commutators of $\mathcal G.$
\begin{proposition}\label{center}
A sequence  $\tau=\{\tau_k\}_{k\in\mathbb{N}}$ of linear maps  on $\mathcal{G}$ is center valued and vanishes at commutators if and only if for each $k\in\mathbb N$ the map $\tau_k$ has the presentation 
\[\tau_k\left(\begin{array}{cc}
a & m\\
n & b
\end{array}\right)=\left(\begin{array}{cc}
\ell_k(a)+P''_k(b) &  \\
  & Q''_k(a)+\ell'_k(b)
\end{array}\right),\]
 where $\ell_k:A\longrightarrow Z(A)$, $P''_k:B\longrightarrow Z(A)$, $Q''_k:A\longrightarrow Z(B)$ and $\ell'_k:B\longrightarrow Z(B)$ are linear maps vanishing at commutators and satisfying the following properties:
\begin{enumerate}[\hspace{1em}\rm (i)]
\item $\ell_k(a)\oplus Q''_k(a)\in Z(\mathcal G)$ and  $P''_k(b)\oplus \ell'_k(b)\in Z(\mathcal G),$ for all $a\in A, b\in B$ and $k\in\mathbb{N}$.
\item $\ell_k(mn)=P''_k(nm)$ and $\ell'_k(nm)=Q''_k(mn),$  for all $ m \in M, n\in N$ and $k\in\mathbb{N}$.
\end{enumerate}
\end{proposition}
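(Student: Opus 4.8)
The plan is to prove both implications by leaning on the explicit description of $Z(\mathcal{G})$ recalled in the introduction: a central element is diagonal, of the form $u\oplus v$ with $u\in Z(A)$, $v\in Z(B)$, and $um=mv$, $nu=vn$ for all $m\in M$, $n\in N$. In particular $Z(\mathcal{G})$ is a subalgebra, hence closed under addition. Everything will reduce to evaluating $\tau_k$ on a handful of well-chosen elements.

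For the sufficiency direction I would first observe that the displayed form of $\tau_k$ ignores the off-diagonal entries of its argument, so $\tau_k$ depends only on the two diagonal entries $a,b$. Center-valuedness is then immediate: by condition (i) both $\ell_k(a)\oplus Q''_k(a)$ and $P''_k(b)\oplus\ell'_k(b)$ lie in $Z(\mathcal{G})$, and their sum is exactly $\tau_k\!\left(\begin{smallmatrix}a & m\\ n & b\end{smallmatrix}\right)$. To check that $\tau$ vanishes at commutators I would record that the two diagonal entries of a general commutator $[x,y]$ in $\mathcal{G}$ are $[a,a']+mn'-m'n$ and $[b,b']+nm'-n'm$. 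Applying the formula and using that the four component maps kill the pure commutators $[a,a']$ and $[b,b']$, the surviving module-valued terms cancel in pairs precisely because of condition (ii), namely $\ell_k(mn)=P''_k(nm)$ and $\ell'_k(nm)=Q''_k(mn)$. This half is a short direct computation.

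For the necessity direction I would decompose an arbitrary argument along the four matrix positions and set $\tau_k(a\oplus 0)=:\ell_k(a)\oplus Q''_k(a)$ and $\tau_k(0\oplus b)=:P''_k(b)\oplus\ell'_k(b)$; center-valuedness forces each image to be diagonal with entries in $Z(A)$ and $Z(B)$, which simultaneously produces the target maps and records membership condition (i). The key step is to annihilate the two off-diagonal contributions, and for this I would use that, $A$ and $B$ being unital, the idempotents $e_{11}=1_A\oplus 0$ and $e_{22}=0\oplus 1_B$ satisfy $\left[e_{11},\left(\begin{smallmatrix}0&m\\0&0\end{smallmatrix}\right)\right]=\left(\begin{smallmatrix}0&m\\0&0\end{smallmatrix}\right)$ and $\left[e_{22},\left(\begin{smallmatrix}0&0\\n&0\end{smallmatrix}\right)\right]=\left(\begin{smallmatrix}0&0\\n&0\end{smallmatrix}\right)$; hence these elementary elements are themselves commutators and are killed by $\tau_k$, leaving exactly the stated form of $\tau_k$.

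Finally, to pin down the algebraic constraints on the component maps I would test $\tau_k$ on three families of commutators. The diagonal commutators $[a\oplus 0,\,a'\oplus 0]=[a,a']\oplus 0$ and $[0\oplus b,\,0\oplus b']=0\oplus[b,b']$ force $\ell_k,Q''_k$ to vanish on commutators of $A$ and $P''_k,\ell'_k$ on commutators of $B$. The mixed commutator $\left[\left(\begin{smallmatrix}0&m\\0&0\end{smallmatrix}\right),\left(\begin{smallmatrix}0&0\\n&0\end{smallmatrix}\right)\right]=(mn)\oplus(-nm)$, after applying $\tau_k$ and invoking vanishing at commutators, yields $\ell_k(mn)-P''_k(nm)=0$ and $Q''_k(mn)-\ell'_k(nm)=0$, which is precisely condition (ii). I do not expect a genuine obstacle here; the only care required is bookkeeping of which component lands in which center, and the entire argument rests on the two observations that $Z(\mathcal{G})$ is the stated diagonal subalgebra and that the elementary off-diagonal matrices are commutators.
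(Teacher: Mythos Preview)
Your argument is correct and follows essentially the same route as the paper: both directions rest on the diagonal description of $Z(\mathcal G)$, the observation that the off-diagonal elementary matrices are commutators (the paper uses $b'=1$ in the general commutator formula where you use $e_{11},e_{22}$, which amounts to the same identity), and then testing on the commutators $[a,a']\oplus 0$, $0\oplus[b,b']$, and $(mn)\oplus(-nm)$ to extract the constraints on $\ell_k,P''_k,Q''_k,\ell'_k$. The only difference is packaging: the paper starts from eight component maps $p_{ik},q_{ik}$ and kills four of them, whereas you kill the $M$- and $N$-contributions directly before naming the surviving maps.
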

%%%%%%%%%%%%%%%%%%%%%%%%%%%%%%%%%%%%%%%
\section{\color{SEC} Proper Lie higher derivations}
Hereinafter, suppose that the modules $M$ and $N$ appeared in definition of the generalized matrix algebras are $2-$torion free; ($M$ is said to be $2-$torsion free if $2m=0$ implies $m=0$ for all $m\in M$).

According to the Cheung's method {\cite[Theorem 6]{Ch2}}, in the following theorem we give a necessary and sufficient condition under which Lie higher derivations on the generalized matrix algebra $\mathcal G$ are proper.
\begin{theorem}\label{Asasi}
Let $\mathcal{G}$ be a generalized matrix algebra. A Lie higher derivation $\K $ on $\mathcal{G}$ of the form {\rm ($\bigstar$)} is proper if and only if there exist two sequences of linear mappings $\{\ell_k\}_{k\in\mathbb{N}}:A \longrightarrow Z(A)$ and 
$\{\ell'_k\}_{k\in\mathbb{N}}:B \longrightarrow Z(B)$ satisfying the following three properties:
\begin{enumerate}[\hspace{1em}\rm ($A$)]
\item $\{P_k-\ell_k\}_{k\in\mathbb{N}}$ and $\{Q_k-\ell'_k\}_{k\in\mathbb{N}}$ are higher derivations on $A$ and $B$, respectively.
\item $\ell_k(a)\oplus Q''_k(a)\in Z(\mathcal G)$ and  $P''_k(b)\oplus \ell'_k(b)\in Z(\mathcal G),$ for all $a\in A, b\in B$ and $k\in\mathbb{N}$.
\item $\ell_k(mn)=P''_k(nm)$ and $\ell'_k(nm)=Q''_k(mn),$ for all $ m \in M, n\in N$ and $k\in\mathbb{N}$.
\end{enumerate}
\end{theorem}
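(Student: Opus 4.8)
\section*{Proof proposal}

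The plan is to argue directly from the definition of properness: $\K$ is proper exactly when it splits as $\K=\D+\tau$ with $\D=\{\D_k\}$ a higher derivation and $\tau=\{\tau_k\}$ a center-valued sequence vanishing at commutators. Thus $\D$ is governed by Theorem \ref{HD}, $\tau$ by Proposition \ref{center}, and the whole statement becomes an equivalence between the relations of Theorem \ref{LHD} for $\K$ and those of Theorem \ref{HD} for $\D$, bridged by the data $\ell_k,\ell'_k,P''_k,Q''_k$. Since each $\tau_k$ is supported on the diagonal and only touches the $A$- and $B$-entries, comparing components in $\K=\D+\tau$ shows that all off-diagonal component maps of $\K$ and $\D$ coincide, while on the diagonal $\mathfrak p_{1k}=\mathtt p_{1k}+\ell_k$ and $\mathfrak q_{2k}=\mathtt q_{2k}+\ell'_k$, and $\mathfrak p_{2k},\mathfrak q_{1k}$ carry the central maps $P''_k,Q''_k$.

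The engine of both directions, which I would establish first by induction on $k$, is the pair of identities $P_k-\ell_k=\mathsf P_k$ and $Q_k-\ell'_k=\mathsf Q_k$, where $\mathsf P_k,\mathsf Q_k$ are the auxiliary maps attached to $\D$ in Theorem \ref{HD}. Granting these for indices below $k$, the Lie correction term collapses to the associative one, $p_k=\mathsf p_k$, because each summand $\big(P_i(a)m_{\beta_1}-m_{\beta_1}Q''_i(a)\big)$ reduces to $\mathsf P_i(a)m_{\beta_1}$: writing $P_i=\mathsf P_i+\ell_i$ and invoking $\ell_i(a)m=mQ''_i(a)$ (the content of $\ell_i(a)\oplus Q''_i(a)\in Z(\mathcal G)$) annihilates the extra term. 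The same cancellations handle $p'_k,q_k,q'_k$, and an analogous computation with $P''_i(b)m=m\ell'_i(b)$ shows $p''_k=\mathtt p_{2k}$ and $q''_k=\mathtt q_{1k}$, so that the central maps read off from $\tau$ agree with the $P''_k,Q''_k$ attached to $\K$.

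For the forward direction I would start from a splitting $\K=\D+\tau$, read $\ell_k,\ell'_k,P''_k,Q''_k$ off the shape of $\tau$ in Proposition \ref{center}, whence (B) and (C) are immediate from parts (i), (ii) of that proposition, and deduce (A) from the engine together with Theorem \ref{HD}(a), since $P_k-\ell_k=\mathsf P_k$ and $\{\mathsf P_k\}$ is a higher derivation on $A$ (and likewise on $B$). For the converse I would \emph{define} $\tau$ by the formula of Proposition \ref{center} from the given $\ell_k,\ell'_k$ and the $P''_k,Q''_k$ of $\K$, and set $\D:=\K-\tau$. First one checks $\tau$ is center-valued and vanishes at commutators: the vanishing of $P''_k,Q''_k$ at commutators is part of Theorem \ref{LHD}(1), while $\ell_k([a,a'])=0$ is forced because $P_k$ and $P_k-\ell_k$ are both Lie higher derivations on $A$ with $\ell_k$ central, so their Lie-higher-derivation identities share the same commutator sum and differ only by $\ell_k([a,a'])$; similarly for $\ell'_k$. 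It then remains to verify that $\D$ satisfies every clause (a)--(n) of Theorem \ref{HD}.

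This final verification is where the genuine work lies, and I expect it to be the main obstacle. Each clause of Theorem \ref{HD} is matched against the corresponding clause of Theorem \ref{LHD}, the discrepancy always being a term of the form $\ell_i(a)x$, $xQ''_i(a)$, $P''_i(b)x$ or $x\ell'_i(b)$; every such term is absorbed by one of the four centrality relations encoded in (B), namely $\ell_i(a)m=mQ''_i(a)$, $n\ell_i(a)=Q''_i(a)n$, $P''_i(b)m=m\ell'_i(b)$, $nP''_i(b)=\ell'_i(b)n$, together with (C) for the module-trace clauses (k)--(n). Because $\tau$ leaves the off-diagonal maps $\mathfrak f_{3k},\mathfrak g_{4k},\mathfrak p_{3k},\dots$ untouched, those clauses need only the centrality relations, whereas the diagonal clauses (a), (i), (j) additionally require the inductive identity $p_k=\mathsf p_k$. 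No single clause is deep, but arranging the induction so that all the clauses of Theorem \ref{HD} close simultaneously is the delicate bookkeeping point.
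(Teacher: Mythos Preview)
Your proposal is correct and follows essentially the same route as the paper: both arguments hinge on the inductive identity $p_k=\mathsf p_k$ (equivalently $P_k-\ell_k=\mathsf P_k$), which collapses each Lie correction term $(P_i(a)m_{\beta_1}-m_{\beta_1}Q''_i(a))$ to $\mathsf P_i(a)m_{\beta_1}$ via the centrality relation $\ell_i(a)m=mQ''_i(a)$ encoded in (B), together with the structural descriptions in Theorems~\ref{LHD}, \ref{HD} and Proposition~\ref{center}. The only organizational difference is that the paper first invokes a ``without loss of generality'' reduction (borrowed from \cite{DW}) to pass to a simplified form of $\K_k$ and then verifies the module-trace relations (your clauses (k)--(n)) through a single well-chosen commutator computation, whereas you propose to match the clauses of Theorem~\ref{HD} against those of Theorem~\ref{LHD} one by one; both are valid and the underlying cancellations are identical.
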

\begin{proof}
For sufficiency by induction on $k$, we know that for $k=1$ the result is true by \cite{Ch2}. Now let the result holds for any integer less than $k$, we prove this for $k$. By using induction hypothesis and Theorems \ref{LHD} and \ref{HD}, like step $1$ appeared in \cite{DW}, without loss of generality we can consider structure of $\K_k$ as
\[\K_k\left(\begin{array}{cc}
a & m\\
n & b
\end{array}\right)=\left(\begin{array}{cc}
\mathfrak p_{1k}(a)+\mathfrak p_{2k}(b) & \mathfrak f_{1k}(a)+\mathfrak f_{2k}(b)+\mathfrak f_{3k}(m)\\
\mathfrak g_{1k}(a)+\mathfrak g_{2k}(b)+\mathfrak g_{4k}(n) & \mathfrak q_{1k}(a)+\mathfrak q_{2k}(b)
\end{array}\right)\]
where $\mathfrak p_{1k}, \mathfrak p_{2k}, \mathfrak q_{1k}, \mathfrak q_{2k}, \mathfrak f_{3k}$ and $\mathfrak g_{4k}$ have the properties $(1), (2), (6), (7), (8)$ and $(9)$ in Lemma \ref{LHD}. Replace $\mathfrak p_{1k}, \mathfrak p_{2k}, \mathfrak q_{1k}, \mathfrak q_{2k}$ with
$P_k-p_k, Q_k-q_k, P''_k+p''_k, Q''_k+q''_k$ respectively, then we have
\small{\begin{eqnarray}\label{3.1}
\K_k\left(\begin{array}{cc}
a & m\\
n & b
\end{array}\right)\hspace*{-.3cm}&=&\hspace*{-.3cm}\left(\begin{array}{cc}
P_k(a)-p_k(a)+P''_k(b)+p''_k(b) & \mathfrak f_{1k}(a)+\mathfrak f_{2k}(b)+\mathfrak f_{3k}(m)\\
\mathfrak g_{1k}(a)+\mathfrak g_{2k}(b)+\mathfrak g_{4k}(n) & Q''_k(a)+q''_k(a)+Q_k(b)-q_k(b)
\end{array}\right),
\end{eqnarray}}
By the induction hypothesis as $\K_i$ is proper for all $i<k$ we may write $p_k, p'_k, p''_k, q_k, q'_k, q''_k, \mathfrak f_{1k}, \mathfrak f_{2k}, \mathfrak g_{1k}$ and $\mathfrak g_{2k}$ as follows
\[p_k(a)=\sum_{r=1}^{\eta_k}\sum_{i+(\alpha+\beta)_r=k,\, i\leq k-2}\big(P_i(a)-\ell_i(a)\big)m_{\beta_1}n_{\alpha_1}\ldots m_{\beta_r}n_{\alpha_r},\]
\[p'_k(a)=\sum_{r=1}^{\eta_k}\sum_{i+(\alpha+\beta)_r=k,\, i\leq k-2}m_{\beta_r}n_{\alpha_r}\ldots m_{\beta_1}n_{\alpha_1}\big(P'_i(a)-\ell_i(a)\big),\]
\begin{eqnarray*}
p''_k(b)&=&\sum_{r=1}^{\eta_k}\sum_{i+(\alpha+\beta)_r=k,\, i\leq k-2}m_{\beta_1}\big(Q_i(b)-\ell'_i(b)\big)n_{\alpha_1}\ldots m_{\beta_r}n_{\alpha_r}\\
&=&\sum_{r=1}^{\eta_k}\sum_{i+(\alpha+\beta)_r=k,\, i\leq k-2}m_{\beta_r}n_{\alpha_r}\ldots m_{\beta_1}\big(Q'_i(b)-\ell'_i(b)\big)n_{\alpha_1},
\end{eqnarray*}
\[q_k(b)=\sum_{r=1}^{\eta_k}\sum_{i+(\alpha+\beta)_r=k,\, i\leq k-2}n_{\alpha_r}m_{\beta_r}\ldots n_{\alpha_1}m_{\beta_1}\big(Q_i(b)-\ell'_i(b)\big),\]
\[q'_k(b)=\sum_{r=1}^{\eta_k}\sum_{i+(\alpha+\beta)_r=k,\, i\leq k-2}\big(Q'_i(b)-\ell'_i(b)\big)n_{\alpha_1}m_{\beta_1}\ldots n_{\alpha_r}m_{\beta_r},\]
\begin{eqnarray*}
q''_k(a)&=&\sum_{r=1}^{\eta_k}\sum_{i+(\alpha+\beta)_r=k,\, i\leq k-2}n_{\alpha_r}m_{\beta_r}\ldots n_{\alpha_1}\big(P_i(a)-\ell_i(a)\big)m_{\beta_1}\\
&=&\sum_{r=1}^{\eta_k}\sum_{i+(\alpha+\beta)_r=k,\, i\leq k-2}n_{\alpha_1}\big(P'_i(a)-\ell_i(a)\big)m_{\beta_1}\ldots n_{\alpha_r}m_{\beta_r}
\end{eqnarray*}
\[\mathfrak f_{1k}(a)=\sum_{i+j=k, i\neq k}(P_i(a)m_j-m_jQ''_i(a))=\sum_{i+j=k, i\neq k}\big(P_i(a)-\ell_i(a)\big)m_j,\]
\[\mathfrak f_{2k}(b)=-\sum_{i+j=k, i\neq k}(m_jQ_i(b)-P''_i(b)m_j)=-\sum_{i+j=k, i\neq k}m_j\big(Q_i(b)-\ell'_i(b)\big),\]
\[\mathfrak g_{1k}(a)=\sum_{i+j=k, i\neq k}(n_jP'_i(a)-Q''_i(a)n_j)=\sum_{i+j=k, i\neq k}n_j\big(P'_i(a)-\ell_i(a)\big),\]
\[\mathfrak g_{2k}(b)=-\sum_{i+j=k, i\neq k}(Q'_i(b)n_j-n_jP''_i(b))=-\sum_{i+j=k, i\neq k}\big(Q'_i(b)-\ell'_i(b)\big)n_j,\]
i.e. $p_k(a), p'_k(a), p''_k(b), q_k(b), q'_k(b), q''_k(a), \mathfrak f_{1k}(a), \mathfrak f_{2k}(b), \mathfrak g_{1k}(a)$ and $\mathfrak g_{2k}(b)$ are the same as those appeared in the structure of higher derivations in Theorem \ref{HD}. So we can present \eqref{3.1} to the simpler form
\begin{eqnarray*}
\K_k\left(\begin{array}{cc}
a & m\\
n & b
\end{array}\right)&=&\left(\begin{array}{cc}
P_k(a)+P''_k(b)& \mathfrak f_{3k}(m)\\
\mathfrak g_{4k}(n) & Q''_k(a)+Q_k(b)
\end{array}\right).
\end{eqnarray*}
Set
\[\D_k\left(\begin{array}{cc}
a & m\\
n & b
\end{array}\right)=\left(\begin{array}{cc}
P_k(a)-\ell_k(a) & \mathfrak f_{3k}(m)\\
\mathfrak g_{4k}(n) & Q_k(b)-\ell'_k(b)
\end{array}\right),\]
and
\[\tau_k\left(\begin{array}{cc}
a & m\\
n & b
\end{array}\right)=\left(\begin{array}{cc}
\ell_k(a)+P''_k(b)& \\
 & Q''_k(a)+\ell'_k(b)
\end{array}\right).\]
For more convenience set
\begin{enumerate}[\hspace{1em}\rm (i)]
\item $\mathsf P_k(a)=P_k(a)-\ell_k(a),\ \mathsf Q_k(b)=Q_k(b)-\ell'_k(b)$
\item $\gamma_k(a, b)=\ell_k(a)+P''_k(b)$ and
\item $\gamma'_k(a, b)=Q''_k(a)+\ell'_k(b)$.
\end{enumerate}
Considering the above relations we have
\[\K_k\left(\begin{array}{cc}
a & m\\
n & b
\end{array}\right)=\left(\begin{array}{cc}
\mathsf P_k(a)+\gamma_k(a, b) & \mathfrak f_{3k}(m)\\
\mathfrak g_{4k}(n) & \mathsf Q_k(b)+\gamma'_k(a, b)
\end{array}\right).\]
Apply $\K_k$ on commutator
$\left[\left(\begin{array}{cc}
0 & 0\\
n & 1
\end{array}\right),\left(\begin{array}{cc}
0 & m\\
0 & -nm
\end{array}\right)\right]$, we get:
\begin{equation}\label{3.4}
P_k(mn)=\sum_{i+j=k}\mathfrak f_{3i}(m)\mathfrak g_{4j}(n)-\gamma_k(mn,-nm),
\end{equation}
\begin{equation}\label{3.5}
Q_k(nm)=\sum_{i+j=k}\mathfrak g_{4j}(n)\mathfrak f_{3i}(m)-\gamma'_k(mn,-nm).
\end{equation}
From the assumption $(C)$, since $\ell_k(mn)=P''_k(nm)$ and $\ell'_k(nm)=Q''_k(mn)$, then $\gamma_k(mn,-nm)=0$ and $\gamma'_k(mn,-nm)=0$, it follows that
\[\mathsf P_k(mn)=\sum_{i+j=k}\mathfrak f_{3i}(m)\mathfrak g_{4j}(n)\, \, \,  and \, \, \, 
\mathsf Q_k(nm)=\sum_{i+j=k}\mathfrak g_{4j}(n)\mathfrak f_{3i}(m)\]
for all $ m \in M, n\in N$ and $k\in\mathbb{N}$. A direct verification reveals that $\D=\{\D_k\}_{k\in\mathbb{N}}$ is a higher derivation and $\tau=\{\tau_k\}_{k\in\mathbb{N}}$ is a sequence of center valued  maps.

For necessity, let $\K$ be proper i.e. $\K=\D+\tau$ for some higher derivation $\D$ and a sequence of center valued maps $\tau$. Applying the presentations ($\bigstar$), ($\clubsuit$) for $\K$ and $\D$, respectively, we have $\tau=\K-\D$ as
\[\tau_k\left(\begin{array}{cc}
a & m\\
n & b
\end{array}\right)=\left(\begin{array}{cc}
(P_k-\mathsf P_k)(a)+P''_k(b) &  \\
 & Q''_k(a)+(Q_k-\mathsf Q_k)(b)
\end{array}\right).\]
We set $\ell_k=P_k-\mathsf P_k, \ell'_k=Q_k-\mathsf Q_k$, one can directly check taht $\{\ell_k\}_{k\in\mathbb{N}}, \{\ell'_k\}_{k\in\mathbb{N}}$ are two sequences of maps satisfying the required properties.
\end{proof}
\begin{remark}\label{faithfulness}
It is worthwhile mentioning that in the case where $M$ is a faithful $(A, B)$-module then;
\begin{enumerate}[\hspace{1em}\rm (i)]
\item In Theorem \ref{LHD}, the conditions $Q''_k([a,a'])=0$ and
$P''_k([b,b']) = 0$ for all $k \in\mathbb N$, are superfluous as those can be acquired from (1), (4) and (5). Indeed, by induction on $k$ we know that for $k=1$ this is true by \cite{Ch0}. Suppose that the result holds for any integer less than $k$. For $a, a'\in A, m\in M,$ from (4) we get
\begin{equation}\label{3.2}
\mathfrak f_{3k}([a,a']m)=\sum_{i+j=k,\, j\neq 0}\big(P_i([a,a'])\mathfrak f_{3j}(m)-\mathfrak f_{3j}(m)Q''_i([a,a'])\big)
\end{equation}
On the other hand, employing  (c) and then (a), we have
\begin{eqnarray}\label{3.3}
\nonumber \mathfrak f_{3k}([a,a']m)&=&\mathfrak f_{3k}(aa'm-a'am)\notag\\
\nonumber &=&\sum_{i+j=k,\, j\neq 0}\big(P_i(a)\mathfrak f_{3j}(a'm)-\mathfrak f_{3j}(a'm)Q''_i(a)\big)\\
\nonumber &&-\big(\sum_{i+j=k,\, j\neq 0}\big(P_i(a')\mathfrak f_{3j}(am)-\mathfrak f_{3j}(am)Q''_i(a')\big)\notag\\
\nonumber &=&\sum_{r+s+t=k}\big(P_r(a)P_s(a^{\prime})\mathfrak f_{3t}(m)-P_r(a)\mathfrak f_{3t}(m)Q''_s(a^{\prime})\big)\\
\nonumber &&-\sum_{k+s+t=k}\big(P_s(a^{\prime})\mathfrak f_{3t}(m)Q''_k(a)-\mathfrak f_{3t}(m)Q''_s(a^{\prime})Q''_r(a)\big)\\
\nonumber &&-\sum_{r+s+t=k}\big(P_r(a^{\prime})P_s(a)\mathfrak f_{3t}(m)-P_r(a^{\prime})\mathfrak f_{3t}(m)Q''_s(a)\big)\\
\nonumber &&+\sum_{r+s+t=k}\big(P_s(a)\mathfrak f_{3t}(m)Q''_r(a^{\prime})-\mathfrak f_{3t}(m)Q''_s(a)Q''_r(a^{\prime})\big)\\
&=&\sum_{r+s+t=k}[P_r(a),P_s(a^{\prime})]\mathfrak f_{3t}(m).
\end{eqnarray}
Comparing the equations \eqref{3.2} and \eqref{3.3} along with assumption of induction indicates that $mQ''_k([a,a'])=0,$ for any $m\in M$, thus the equality $Q''_k([a,a'])=0$ follows from the faithfulness of $M$ (as a right $B-$module). Similarly on can check that $P''_k([b,b'])=0$ for all $k\in \mathbb{N}$.
\item In Theorem \ref{HD}, the assertion $\rm (a)$ can also be removed as it can be acquired from $\rm (d)$ and $\rm (e)$ by a similar reasoning as in (i), (see {\cite[Page 303]{Ch2}}).
\item In Theorem \ref{Asasi}, the same reason as in (ii) indicates that the assertion $\rm (A)$ in Theorem \ref{Asasi}, stating that $P_k-\ell_k$ and $Q_k-\ell_k$ are higher derivations, is extra.
\end{enumerate}
\end{remark}
In the next corollary we offer the criterion characterizing LHD property for the generalized matrix algebra $\mathcal G$ as a conclusion of 
Theorem \ref{Asasi}. 
\begin{corollary}\label{A'}
Let $\mathcal{G}$ be a generalized matrix algebra and $\K$ be a Lie higher derivation on $\mathcal{G}$ of the form stated in Theorem \ref{LHD}. If $\K$ is proper then  
\begin{enumerate}[\hspace{1em}\rm ($A'$)]
\item $Q''_k(A)\subseteq \pi_B(Z(\mathcal{G}))$, $P''_k(B)\subseteq\pi_A(Z(\mathcal{G}))$ and,
\item $P''_k(nm)\oplus Q''_k(mn)\in Z(\mathcal{G}),$ 
\end{enumerate} 
for all $m\in M, n\in N$ and $k\in\mathbb{N}$. The converse is valid when $\mathcal G$ is weakly faithful.
\end{corollary}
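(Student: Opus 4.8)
The statement is essentially a dictionary entry: it rephrases conditions (B) and (C) of Theorem~\ref{Asasi} in terms of the projections $\pi_A,\pi_B$ and (for the converse) the isomorphism $\varphi$. So the plan is to move back and forth between Theorem~\ref{Asasi} and the conditions ($A'$)/$\,P''_k(nm)\oplus Q''_k(mn)\in Z(\mathcal G)$. For the forward implication I would simply start from properness: by Theorem~\ref{Asasi} there are sequences $\{\ell_k\},\{\ell'_k\}$ satisfying (A)--(C). Applying $\pi_B$ to the membership $\ell_k(a)\oplus Q''_k(a)\in Z(\mathcal G)$ supplied by (B) gives $Q''_k(a)\in\pi_B(Z(\mathcal G))$ for every $a\in A$, and applying $\pi_A$ to $P''_k(b)\oplus\ell'_k(b)\in Z(\mathcal G)$ gives $P''_k(b)\in\pi_A(Z(\mathcal G))$; this is ($A'$). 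Taking $a=mn$ in (B) and substituting $\ell_k(mn)=P''_k(nm)$ from (C) yields $P''_k(nm)\oplus Q''_k(mn)\in Z(\mathcal G)$, the second assertion. Observe that neither weak faithfulness nor condition (A) enters this direction.

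For the converse I assume $\mathcal G$ is weakly faithful together with ($A'$) and $P''_k(nm)\oplus Q''_k(mn)\in Z(\mathcal G)$, and I aim to manufacture exactly the data Theorem~\ref{Asasi} requires. Weak faithfulness furnishes the algebra isomorphism $\varphi\colon\pi_A(Z(\mathcal G))\to\pi_B(Z(\mathcal G))$ with $c\oplus\varphi(c)\in Z(\mathcal G)$, $cm=m\varphi(c)$ and $\varphi(c)n=nc$. Since ($A'$) places $Q''_k(A)$ inside $\pi_B(Z(\mathcal G))$ and $P''_k(B)$ inside $\pi_A(Z(\mathcal G))$, I define $\ell_k:=\varphi^{-1}\circ Q''_k\colon A\to Z(A)$ and $\ell'_k:=\varphi\circ P''_k\colon B\to Z(B)$, both linear. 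Property (B) is then immediate, because $\ell_k(a)\oplus Q''_k(a)=\ell_k(a)\oplus\varphi(\ell_k(a))\in Z(\mathcal G)$ and likewise for $P''_k(b)\oplus\ell'_k(b)$. For (C) I use that $P''_k(nm)\oplus Q''_k(mn)\in Z(\mathcal G)$ with $P''_k(nm)\in\pi_A(Z(\mathcal G))$ forces $Q''_k(mn)=\varphi(P''_k(nm))$; here weak faithfulness is precisely what guarantees that the $B$-coordinate of a central element is determined by its $A$-coordinate via $\varphi$. Applying $\varphi^{\pm1}$ then gives $\ell_k(mn)=P''_k(nm)$ and $\ell'_k(nm)=Q''_k(mn)$.

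The remaining, and genuinely substantive, point is condition (A): that $\{P_k-\ell_k\}$ and $\{Q_k-\ell'_k\}$ are honest higher derivations. The plan is to feed the $\varphi$-relations into the module identities of Theorem~\ref{LHD}. Writing $\mathsf P_i:=P_i-\ell_i$ and using $Q''_i(a)=\varphi(\ell_i(a))$ together with $\mathfrak f_{3j}(m)\varphi(\ell_i(a))=\ell_i(a)\mathfrak f_{3j}(m)$, identity (4) collapses to $\mathfrak f_{3k}(am)=\sum_{i+j=k}\mathsf P_i(a)\mathfrak f_{3j}(m)$; symmetrically, $Q''_i(a)\mathfrak g_{4j}(n)=\mathfrak g_{4j}(n)\ell_i(a)$ turns identity (6) into $\mathfrak g_{4k}(na)=\sum_{i+j=k}\mathfrak g_{4j}(n)(P'_i(a)-\ell_i(a))$. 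Expanding $\mathfrak f_{3k}\big((aa')m\big)$ first as $(aa')\cdot m$ and then as $a\cdot(a'm)$, a straightforward induction on $k$ shows that the associator defect $\delta_k:=\mathsf P_k(aa')-\sum_{i+j=k}\mathsf P_i(a)\mathsf P_j(a')$ annihilates $M$ from the left, while the companion $N$-side computation produces the analogous annihilation from the right. Weak faithfulness then forces $\delta_k=0$, i.e.\ the higher-derivation identity, and a parallel argument treats $\{Q_k-\ell'_k\}$. With (A)--(C) established, Theorem~\ref{Asasi} pronounces $\K$ proper.

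I expect the main obstacle to be exactly this verification of (A), and the delicate wrinkle is that the $M$-side identity is phrased through $P_k$ whereas the $N$-side identity (6) is phrased through $P'_k$; to annihilate the \emph{single} defect $\delta_k$ of $\mathsf P_k$ on both sides one must first reconcile the $P$- and $P'$-pictures. The intended bridge is the product relation $\mathsf P_k(mn)=\sum_{i+j=k}\mathfrak f_{3i}(m)\mathfrak g_{4j}(n)$ extracted from Theorem~\ref{LHD}(11) once (C) makes the central correction vanish, exactly as in the proof of Theorem~\ref{Asasi}. This is also the only place where weak faithfulness is indispensable: unlike in Remark~\ref{faithfulness}, where faithfulness of $M$ alone lets one drop assertion (A), here one can conclude vanishing only by exploiting annihilation on the $M$- and $N$-sides simultaneously, which is precisely the content of weak faithfulness.
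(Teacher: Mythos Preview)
Your proof follows the same route as the paper: both directions go through Theorem~\ref{Asasi}, and for the converse you define $\ell_k=\varphi^{-1}\circ Q''_k$, $\ell'_k=\varphi\circ P''_k$ via the isomorphism $\varphi$ supplied by weak faithfulness, exactly as the paper does. The only difference is that where the paper disposes of condition~(A) by a one-line citation of Remark~\ref{faithfulness}(iii), you unfold that argument explicitly (and, commendably, flag the $P_k$ versus $P'_k$ discrepancy that arises when one has only weak faithfulness rather than faithfulness of $M$); this extra care is in fact warranted, since Remark~\ref{faithfulness} as stated assumes $M$ faithful.
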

%In this case $\ell_A$ and $\ell_B$ are unique; more precisely,  $\ell_\A=\varphi^{-1}\circ h_\A$ and $\ell_\B= \varphi\circ h_\B,$ where $\varphi:\pi_\A(Z(\mathcal{G}))\longrightarrow \pi_\B(Z(\mathcal{G}))$ is  the isomorphism satisfying   $a\oplus\varphi(a)\in Z(\G)$ for all $a\in\A,$ whose existence guaranteed by the faithfulness of $\M.$ So the condition $\rm (B)$ is equivalent to saying that  $h_\A(\A) \subseteq \pi_\B(Z(\mathcal{G}))$ and $h_\B(\B)\subseteq \pi_\A(Z(\mathcal{G})).$
%It should also be remarked that, as we have removed in Theorem \ref{Asasi},  the  equalities $\ell_\A([a,a'])=0$ and $\ell_\B([b,b'])=0$ in {\cite[Theorem 6]{CH1}} are superfluous, however, Cheung has removed them in the case that $\M$ is faithful.
%\end{enumerate}
%\end{remark}
\begin{proof}
Let $\K$ be proper, then Theorem \ref{Asasi} ensures that $\rm (A')$ and $\rm (B')$ hold.
Conversely, suppose that $\mathcal G$ is weakly faithful. Let $\varphi:\pi_A(Z(\mathcal{G}))\longrightarrow \pi_B(Z(\mathcal{G}))$ be the isomorphism satisfying $a\oplus\varphi(a)\in Z(\mathcal G)$ for all $a\in A,$  whose existence guaranteed by the weak faithfulness of $\mathcal G$ and \cite{Be}. By using assumption $\rm (A')$, we define $\ell_k:A\longrightarrow Z(A)$ and
$\ell'_k:B \longrightarrow Z(B)$  by $\ell_k=\varphi^{-1}\circ Q''_k$ and
$\ell'_k= \varphi\circ P''_k$. Obviously $\ell_k(a)\oplus Q''_k(a)\in Z(\mathcal G)$ and  $P''_k(b)\oplus \ell'_k(b)\in Z(\mathcal G),$ for all $a\in A, b\in B.$
%By Theorem \ref {HH}, we need  to show that $p_\A=P-\ell_\A$ and $q_k=Q_k-\ell'_k$ are derivations. From Proposition \ref{F}, for each $a\in \A$ and $m\in \M $ we have
% \[ f(am)= P(a)m - mh_\A(a)+af(m) = (P(a)-\ell_\A(a))m+af(m)=p_\A(a)m+af(m),\]
%so  for each $a,a'\in A$ and $m \in\M$ we have $f(aa'm)=p_\A(aa')m+aa'f(m).$ On the other hand,
%  \[f(aa'm)= p_\A(a)a'm+af(a'm)=p_\A(a)a'm+a big(p_\A(a')m+a'f(m)\big).\]
%Thus  $p_\A(aa')m=(p_\A(a)a'+ap_\A(a'))m$ and the faithfulness of $\M$ as a left $\A-$module implies that  $p_\A$
%is a derivation. Similarly  $q_ B=Q-\ell_ B$ is a derivation.
Further, $\rm (B')$ follows  that  \[\ell_k(mn)=\varphi^{-1}(Q''_k(mn))=P''_k(nm) \ {\rm  and}\ \ell'_ k(nm)=\varphi(P''_k(nm))=Q''_k(mn).\]
Now properness of $\K$ follows from Theorem \ref{Asasi} and part (iii) of Remark \ref{faithfulness}.
\end{proof}
%%%%%%%%%%%%%%%%%%%%%%%%%%%%%%%%%%%%%%%%%%%%%%%%%%%%%%%%%%%%%%%%%%%%%%%
\section{\color{SEC} Some sufficient conditions and the main result}
By using Corollary \ref{A'} in the next theorem we give the ``higher'' version of a modification of Du and Wang's result \cite[Theorem 1]{DW}, (see also \cite[Corollary 1]{WW} and \cite[Theorem 2.1]{W} in the case $n=2$).
\begin{theorem}\label{ideal}
Let $\mathcal{G}$ be a weakly faithful generalized matrix algebra. If
\begin{enumerate}[\hspace{1em}\rm (i)]
\item $\pi_A(Z(\mathcal{G}))=Z(A), \pi_B(Z(\mathcal{G}))=Z(B)$, and
\item either $A$ or $B$ does not contain nonzero central ideals,
\end{enumerate}
then $\mathcal{G}$ has LHD property.
\end{theorem}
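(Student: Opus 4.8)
The plan is to apply Corollary \ref{A'}. Since $\mathcal{G}$ is weakly faithful, a Lie higher derivation $\K$ of the form ($\bigstar$) is proper if and only if conditions $(A')$ and $(B')$ hold, so the whole task reduces to verifying these for an arbitrary $\K$. Condition $(A')$ is immediate: by Theorem \ref{LHD}(1) one always has $Q''_k(A)\subseteq Z(B)$ and $P''_k(B)\subseteq Z(A)$, and hypothesis (i) identifies $Z(B)=\pi_B(Z(\mathcal{G}))$ and $Z(A)=\pi_A(Z(\mathcal{G}))$. Thus everything rests on establishing $(B')$, namely $P''_k(nm)\oplus Q''_k(mn)\in Z(\mathcal{G})$ for all $m\in M$, $n\in N$, $k\in\mathbb{N}$, and this is precisely where hypothesis (ii) must enter.

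First I would reformulate $(B')$. By weak faithfulness and (i) there is the isomorphism $\varphi\colon Z(A)\to Z(B)$ with $zp=p\varphi(z)$ and $qz=\varphi(z)q$ for all $z\in Z(A)$, $p\in M$, $q\in N$. Since $P''_k(nm)\in Z(A)$ and $Q''_k(mn)\in Z(B)$, and the central partner of $P''_k(nm)$ is unique, condition $(B')$ for a given pair $(m,n)$ is equivalent to the single identity $u_k(m,n):=\varphi(P''_k(nm))-Q''_k(mn)=0$ in $Z(B)$. Setting $v_k(m,n):=\varphi^{-1}(u_k(m,n))\in Z(A)$, the goal becomes $v_k\equiv 0$. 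Without loss of generality I assume $A$ has no nonzero central ideal (the case of $B$ is symmetric under interchanging the roles of $M$ and $N$).

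I would then argue by induction on $k$. Assuming $\K_i$ proper for $i<k$, the reduction carried out in the proof of Theorem \ref{Asasi} puts $\K_k$ in the simplified shape with diagonal governed by $P_k,Q_k$, the central parts $P''_k,Q''_k$, and off-diagonal $\mathfrak f_{3k},\mathfrak g_{4k}$. Evaluating $\mathfrak f_{3k}$ on the associativity identity $(mn)m'=m(nm')$ in two ways, once via Theorem \ref{LHD}(4) with $a=mn$ and once via Theorem \ref{LHD}(5) with $b=nm'$, produces a relation in $M$ whose lower-order terms cancel by the induction hypothesis (the compatibility $\varphi(P''_i(\cdot))=Q''_i(\cdot)$ for $i<k$) and whose top-order part, after using $P''_k(nm)\in Z(A)$ together with the remaining relations of Theorem \ref{LHD}, collapses to the alternating relation $v_k(m',n)\,m+v_k(m,n)\,m'=0$ for all $m,m'\in M$. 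Putting $m'=m$ and invoking the $2$-torsion-freeness of $M$ gives $v_k(m,n)\,m=0$, and the symmetric computation with $\mathfrak g_{4k}$ on $(nm)n'=n(mn')$ yields the companion relation on the $N$-side.

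Finally I would promote these identities to the vanishing of $v_k$ via hypothesis (ii). From the alternating relation, for any $a\in A$ one computes $v_k(m,n)(am')=-v_k(am',n)\,m$ on one hand and $v_k(m,n)(am')=a\,v_k(m,n)m'=-a\,v_k(m',n)\,m$ on the other, whence $\bigl(v_k(am',n)-a\,v_k(m',n)\bigr)M=\{0\}$; the parallel $N$-sided relation gives $N\bigl(v_k(am',n)-a\,v_k(m',n)\bigr)=\{0\}$, so weak faithfulness forces $v_k(am',n)=a\,v_k(m',n)$. Hence the $Z(A)$-span $W:=\{v_k(m,n):m\in M,\,n\in N\}$ satisfies $AW\subseteq W$, and being contained in $Z(A)$ it is a central ideal of $A$; by hypothesis (ii) it is zero, so $v_k\equiv 0$, i.e. $(B')$ holds at level $k$, closing the induction and proving $\K$ proper. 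The hard part is the middle step: carefully tracking the lower-order terms in the two-way evaluation of $\mathfrak f_{3k}$ so that they telescope under the induction hypothesis and leave exactly the clean alternating relation; once that relation is in hand, the passage from it to the central ideal $W$ and its annihilation by (ii) is a short consequence of weak faithfulness and $2$-torsion-freeness.
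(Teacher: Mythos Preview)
Your overall strategy coincides with the paper's: both reduce to Corollary~\ref{A'} and focus on establishing $(B')$, rewritten as the vanishing of the central defect $v_k(m,n)=P''_k(nm)-\varphi^{-1}(Q''_k(mn))$ (this is exactly $-\gamma_k(mn,-nm)$ in the paper's notation). The difference lies in how the central-ideal condition is produced. The paper works with equation~\eqref{3.4} from the proof of Theorem~\ref{Asasi}: replacing $m$ by $am$ there and expanding $\mathfrak f_{3i}(am)$ via Theorem~\ref{HD}(d), together with the higher-derivation identity $\mathsf P_k(amn)=\sum_{i+j=k}\mathsf P_i(a)\mathsf P_j(mn)$, one reads off $\gamma_k(amn,-nam)=a\,\gamma_k(mn,-nm)$ in a single comparison, so $A\gamma_k(mn,-nm)$ is a central ideal and hypothesis~(ii) finishes immediately. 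Your route instead extracts the alternating relation $v_k(m,n)m'+v_k(m',n)m=0$ from the associativity $(mn)m'=m(nm')$ fed through Theorem~\ref{LHD}(4),(5), and then promotes it to $v_k(am',n)=a\,v_k(m',n)$ by a second pass using weak faithfulness. This is a genuine alternative and it works, but it is longer: what the paper gets in one step you obtain in two, and your second step is not as automatic as you suggest. To pass from $(v_k(am',n)-a\,v_k(m',n))M=0$ to the $N$-sided annihilation you actually need the auxiliary identity $v_k(am,n)=v_k(m,na)$, which in turn comes from the fact that $Q''_k$ (hence $\ell_k=\varphi^{-1}\circ Q''_k$) vanishes on commutators (Theorem~\ref{LHD}(1)); only then does the $N$-sided alternating relation close the loop. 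You should make that step explicit. In short: your argument is correct but more circuitous; the paper's use of \eqref{3.4} gives the $A$-linearity of the defect directly and bypasses the alternating relation altogether.
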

\begin{proof}
By Corollary \ref{A'}, it is enough to show that $P''_k(nm)+Q''_k(mn) \in Z(\mathcal{G})$ for all $m\in M, n\in N$. Without loss of generality suppose that $A$ has no nonzero central ideal. Put
\[\gamma_k(a,b)=\ell_k(a)+P''_k(b)\quad (a\in A,\ b\in B,\ k\in\mathbb{N}),\]
where, as in the proof of the above corollary,  $\ell_k=\varphi^{-1}\circ Q''_k$ and that  $\mathsf P_k=P_k-\ell_k$ is a higher derivation. Now equation \eqref{3.4} implies that
\[\mathsf P_k(amn)=\sum_{i+j=k}\mathfrak f_{3i}(am)\mathfrak g_{4j}(n)-\gamma_k(amn,-nam).\]
The latter equation with the fact that $\mathsf P_k$ is a higher derivation follows that
\[\sum_{i+j=k}\mathsf P_i(a)\mathsf P_j(mn)=\sum_{l+r+j=k}\mathsf P_l(a)\mathfrak f_{3r}(m)\mathfrak g_{4j}(n)-\gamma_k(amn,-nam).\]
By using assumption of induction we get
\[a\mathsf P_k(mn)=\sum_{r+j=k}a\mathfrak f_{3r}(m)\mathfrak g_{4j}(n)-\gamma_k(amn,-nam).\]
Multiply equation \eqref{3.4} from the left by $a$, then we have
\[a\mathsf P_k(mn)=a\sum_{i+j=k}\mathfrak f_{3i}(m)\mathfrak g_{4j}(n)-a\gamma_k(mn,-nm),\]
for all $a\in A, m\in M$ and $n\in N$. The two last equations imply that the set $A\gamma_k(mn,-nm)$ is a central ideal of $A$ for each pair of elements $m\in M, n\in N$. Hence $\ell_k(mn)-P''_k(nm)=\gamma_k(mn,-nm)=0$ and so
$P''_k(nm)\oplus Q''_k(mn)=\ell_k(mn)\oplus Q''_k(mn) \in Z(\mathcal{G})$.
\end{proof}
As some examples of an algebra that has no nonzero central ideal we can mention to a noncommutative unital prime algebra with a nontrivial idempotent, in particular B(X), the algebra of operators on a Banach space $X$ with dim$(X)> 1$, and the full matrix matrix algebra $M_n(A)$ with $n\geq 2$ (see \cite[Lemma 1]{DW}). Also in \cite[Theorem  2]{DW} it is shown that in the generalized matrix algebra $\mathcal G$ with loyal $M$, $A$ does not contain central ideal if $A$ is noncommutative.

%\begin{remark}
%Let $A$ be an agebra and $Z(A)$ be its center. If $\D=\{d_k\}_{k\in\mathbb{N}}$ be a higher derivation on $A$, then $Z(A)$ is invariant under $\D$, i.e. $\D$ maps $Z(A)$ into $Z(A)$.
%\end{remark}
%By using above remark, in the next result we provide some new sufficient conditions assuring the LHD property for $\mathcal G$. 

Parallel to the results of \cite{EM} we have the three following theorems which all of them can be proved by induction and using techniques of \cite{EM} for step 1.

Recall that an algebra $A$ is called domain if it has no zero devisors or equivalently if $aa'=0$ implies $a=0$ or $a'=0$ for every two elements $a,a'\in A$.
\begin{theorem}\label{domain}
Let $\mathcal{G}$ be a weakly faithful generalized matrix algebra.
Then  $\mathcal{G}$  has LHD property if
\begin{enumerate}[\hspace{1em}\rm (i)]
\item $\pi_A(Z(\mathcal{G}))=Z(A),  \pi_B(Z(\mathcal{G}))=Z(B)$ and
\item $A$ and $B$ are domain.
\end{enumerate}
\end{theorem}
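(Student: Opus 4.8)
The plan is to reduce Theorem \ref{domain} to Corollary \ref{A'}, exactly as was done in the proof of Theorem \ref{ideal}. Since $\mathcal G$ is weakly faithful, Corollary \ref{A'} tells us that a Lie higher derivation $\K$ on $\mathcal G$ is proper precisely when conditions $(A')$ and $(B')$ hold. Hypothesis (i) here is identical to the corresponding hypothesis in Theorem \ref{ideal}, and it immediately gives $Q''_k(A)\subseteq Z(B)=\pi_B(Z(\mathcal G))$ and $P''_k(B)\subseteq Z(A)=\pi_A(Z(\mathcal G))$, which is exactly $(A')$. So the entire content to be verified is condition $(B')$, namely that $P''_k(nm)\oplus Q''_k(mn)\in Z(\mathcal G)$ for all $m\in M$, $n\in N$ and all $k\in\mathbb N$. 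The argument will run by induction on $k$, with the base case $k=1$ supplied by the step-$1$ techniques of \cite{EM}.

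First I would set up the same notation as in the proof of Theorem \ref{ideal}: writing $\ell_k=\varphi^{-1}\circ Q''_k$ (so $\ell_k:A\to Z(A)$) and $\mathsf P_k=P_k-\ell_k$, where by part (iii) of Remark \ref{faithfulness} the sequence $\{\mathsf P_k\}$ is a higher derivation on $A$, and putting $\gamma_k(a,b)=\ell_k(a)+P''_k(b)$. The goal becomes showing $\gamma_k(mn,-nm)=\ell_k(mn)-P''_k(nm)=0$. The key computational device is equation \eqref{3.4}, applied to the element $am$ in place of $m$, which yields
\[
\mathsf P_k(amn)=\sum_{i+j=k}\mathfrak f_{3i}(am)\mathfrak g_{4j}(n)-\gamma_k(amn,-nam).
\]
Expanding the left side via the higher-derivation property of $\mathsf P_k$ and the relation $\mathfrak f_{3i}(am)=\sum_{l+r=i}\mathsf P_l(a)\mathfrak f_{3r}(m)$ coming from condition (4) of Theorem \ref{LHD} (rewritten through $\ell$), then invoking the induction hypothesis to strip off the lower-order terms, I expect to arrive at
\[
a\,\mathsf P_k(mn)=\sum_{r+j=k}a\,\mathfrak f_{3r}(m)\mathfrak g_{4j}(n)-\gamma_k(amn,-nam).
\]

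Comparing this with \eqref{3.4} multiplied on the left by $a$ gives $\gamma_k(amn,-nam)=a\,\gamma_k(mn,-nm)$ for all $a\in A$. Here is where hypothesis (ii) replaces the no-central-ideal hypothesis of Theorem \ref{ideal}. In Theorem \ref{ideal} one concluded that $A\,\gamma_k(mn,-nm)$ is a central ideal and hence zero; in the present setting, since $\gamma_k(mn,-nm)\in Z(A)$ and $A$ is a domain, I would instead argue directly. Fixing $m,n$, the element $c:=\gamma_k(mn,-nm)$ is central and satisfies $a c=\gamma_k(amn,-nam)$; taking $a$ to be $1$ recovers $c$ itself, and the domain property forces any nontrivial relation to collapse. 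More carefully, the relation $ac=\gamma_k(amn,-nam)$ combined with a symmetric computation from the $B$-side (using \eqref{3.5} and $\mathfrak g_{4k}$ via conditions (6)--(7)) pins down $c=0$ because a nonzero central element times the domain $A$ cannot annihilate, so $\gamma_k(mn,-nm)=0$. Thus $\ell_k(mn)=P''_k(nm)$, giving $(B')$, and properness follows.

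The main obstacle I anticipate is the bookkeeping in extracting the clean relation $a\,\mathsf P_k(mn)=\sum_{r+j=k}a\,\mathfrak f_{3r}(m)\mathfrak g_{4j}(n)-\gamma_k(amn,-nam)$ from the fully expanded higher-derivation identity: one must carefully cancel all the cross terms of intermediate order against the induction hypothesis (each lower $\mathsf P_i(mn)$ already equals $\sum \mathfrak f_{3r}(m)\mathfrak g_{4j}(n)$), and keep track of the center-valued corrections $\gamma_i$ vanishing at commutators. The passage from $ac=\gamma_k(amn,-nam)$ to $c=0$ using only that $A$ is a domain — rather than the absence of central ideals — is the conceptual heart and the point where hypothesis (ii) does its work; verifying that the domain condition genuinely suffices (and symmetrically handling the case where one instead works with $B$) is the step requiring the most care.
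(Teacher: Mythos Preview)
Your overall framework is sound and matches the paper's outline: the paper does not give a detailed proof of Theorem \ref{domain} either, saying only that it ``can be proved by induction and using techniques of \cite{EM} for step 1.'' Your plan---reduce to Corollary \ref{A'}, obtain $(A')$ from hypothesis (i), and prove $(B')$ by induction using the computation from the proof of Theorem \ref{ideal} to reach
\[
\gamma_k(amn,-nam)=a\,\gamma_k(mn,-nm)\in Z(A)\quad(a\in A)
\]
---is exactly the right scaffold.

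The gap is precisely where you flag it, and your sketch there does not go through as written. From $a c\in Z(A)$ for all $a$ (with $c=\gamma_k(mn,-nm)\in Z(A)$) one gets $[a,a']c=0$ for all $a,a'\in A$. In a \emph{noncommutative} domain this immediately forces $c=0$. But in a \emph{commutative} domain the relation $[a,a']c=0$ is vacuous, so your phrase ``a nonzero central element times the domain $A$ cannot annihilate'' is not the mechanism---nothing is being annihilated. Taking $a=1$ recovers only $c=c$. Thus if both $A$ and $B$ are commutative domains (a case allowed by the hypotheses), the argument as you have written it does not conclude. The ``symmetric computation from the $B$-side'' gives the parallel relation $[b,b']\varphi(c)=0$, which is equally vacuous when $B$ is commutative. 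This is the genuine content imported from \cite{EM}: one needs an argument that couples the two sides through the bimodule actions (conditions (4)--(7) and (11)--(12) of Theorem \ref{LHD}) and uses the domain property in the form ``$xm=0\Rightarrow x=0$ or $m=0$'' on products living in $M$ or $N$, not merely the internal cancellation in $A$ or $B$ separately. You have correctly located the crux but have not supplied the idea that closes it.
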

%\begin{proof}
%By induction and using proof of step1 the result holds. 
%\end{proof}
%As a consequnce of Theorem \ref{domain} we have the next theorem.
\begin{theorem}\label{strong}
The generalized matrix algebra $\mathcal G$ has LHD property if
\begin{enumerate}[\hspace{1em}\rm (i)]
\item $\pi_A(Z(\mathcal{G}))=Z(A), \pi_B(Z(\mathcal{G}))=Z(B)$ and
\item either $M$ or $N$ is strongly faithful.
\end{enumerate}
\end{theorem}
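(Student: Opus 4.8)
The plan is to deduce Theorem~\ref{strong} from Corollary~\ref{A'}. Since $M$ (or $N$) strongly faithful forces $M$ (resp.\ $N$) to be faithful, and a faithful $M$ or $N$ makes $\mathcal G$ weakly faithful, the converse half of Corollary~\ref{A'} is available: it suffices to verify conditions $(A')$ and $(B')$ for every Lie higher derivation $\K$ of the form $(\bigstar)$. Condition $(A')$ is immediate, because Theorem~\ref{LHD}(1) already gives $Q''_k(A)\subseteq Z(B)$ and $P''_k(B)\subseteq Z(A)$, and hypothesis (i) identifies $Z(B)=\pi_B(Z(\mathcal G))$ and $Z(A)=\pi_A(Z(\mathcal G))$. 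Thus the whole content of the theorem is condition $(B')$, namely $P''_k(nm)\oplus Q''_k(mn)\in Z(\mathcal G)$ for all $m\in M$, $n\in N$, $k\in\mathbb N$, which I would prove by induction on $k$; the base case $k=1$ is the Lie derivation statement of \cite{EM}.

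For the inductive step I would first reduce $\K_k$ to the simplified form used in the proof of Theorem~\ref{Asasi}, put $\ell_k:=\varphi^{-1}\circ Q''_k$ and $\mathsf P_k:=P_k-\ell_k$ (a higher derivation, by faithfulness of $M$ together with Remark~\ref{faithfulness}(iii)), and set $c(m,n):=\gamma_k(mn,-nm)=\ell_k(mn)-P''_k(nm)\in Z(A)$ and $c'(m,n):=\gamma'_k(mn,-nm)=Q''_k(mn)-\ell'_k(nm)\in Z(B)$. Since $\ell_k(a)\oplus Q''_k(a)$ and $P''_k(b)\oplus\ell'_k(b)$ lie in $Z(\mathcal G)$, subtraction gives $c(m,n)\oplus c'(m,n)\in Z(\mathcal G)$; and because $M$ is faithful, the target $(B')$ is equivalent to the single vanishing statement $c(m,n)=0$ for all $m,n$ (indeed $c(m,n)\oplus 0\in Z(\mathcal G)$ amounts to $c(m,n)M=0=Nc(m,n)$). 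The crucial identity I would extract is obtained by computing $\mathfrak f_{3k}(mnm')$ two ways: grouping it as $\mathfrak f_{3k}((mn)m')$ and expanding by Theorem~\ref{LHD}(4), and grouping it as $\mathfrak f_{3k}(m(nm'))$ and expanding by Theorem~\ref{LHD}(5). In both expansions the terms carrying $\ell_i$ and $P''_i$ cancel in pairs, because $\ell_i(mn)\oplus Q''_i(mn)$ and $P''_i(nm')\oplus\ell'_i(nm')$ are central; the induction hypothesis $c_i\equiv 0$ for $i<k$ removes all but the leading contribution, and inserting \eqref{3.4} and \eqref{3.5} collapses the two expansions to the single relation
\[
c(m,n)\,m'=m\,c'(m',n)\qquad(m,m'\in M,\ n\in N).
\]
Combined with the centrality relation $c(m,n)\,m'=m'\,c'(m,n)$ this yields the symmetric identity $m'\,c'(m,n)=m\,c'(m',n)$, and the mirror computation with $\mathfrak g_{4k}(nmn')$ (using Theorem~\ref{LHD}(6),(7)) gives $c'(m,n)\,n'=c'(m,n')\,n$.

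The decisive and hardest step is to turn these self-consistent module identities into the actual vanishing $c(m,n)=0$; this is exactly where strong faithfulness is used, and it is the main obstacle, since the relations above are perfectly compatible with a nonzero central $c$ until the extra rigidity is invoked. Recall that $M$ strongly faithful supplies, besides faithfulness, one of the two no-zero-divisor clauses ($am=0\Rightarrow a=0$ or $m=0$, or $mb=0\Rightarrow m=0$ or $b=0$) and forces $A$ or $B$ to be a domain. In each case I would argue as in \cite{EM} for step $1$: faithfulness of $M$ reduces the problem to $c(m,n)M=0$, the identity $c(m,n)m'=m\,c'(m',n)$ transports this to a product against the module, and the domain/no-zero-divisor clause eliminates the only surviving nonzero alternative, giving $c(m,n)=0$ (the degenerate possibility $m=0$ making $c(m,n)=0$ trivially). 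Once $c(m,n)=0$ is established for all $m,n$, condition $(B')$ holds, and Corollary~\ref{A'} closes the induction and the proof. The case in which $N$, rather than $M$, is strongly faithful is handled by the completely symmetric argument.
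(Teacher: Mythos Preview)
Your plan is essentially the paper's own: the paper does not give a detailed argument for Theorem~\ref{strong} but states that, parallel to \cite{EM}, it ``can be proved by induction and using techniques of \cite{EM} for step~1,'' which is exactly the route you outline (reduce via Corollary~\ref{A'} to condition~$(B')$, run the induction through the simplified form from the proof of Theorem~\ref{Asasi}, and invoke the strong-faithfulness manoeuvre of \cite{EM} at the base/inductive step). You flesh out a bit more of the inductive machinery than the paper does---in particular the two-way computation of $\mathfrak f_{3k}(mnm')$ leading to the module identity $c(m,n)m'=m\,c'(m',n)$---but the decisive use of strong faithfulness is, as in the paper, delegated to \cite{EM}.
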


It's remarkable that, we do not know when one can withdraw the assertion strong faithfulness in Theorem \ref{strong}.\\

Now, by gathering the above observations and combination of the assertions in Theorems \ref{ideal}, \ref{domain} and \ref{strong}, we are able to give the main result of this paper providing several sufficient conditions that guarantee the LHD property for a generalized matrix algebra, which one part of its is a generalization of {\cite[Theorem 3.3]{ME}}.
%Motivated by the Cheung's idea \cite{Ch2}, according the latter observations and some suitable combinations of the various assertions in Theorems \ref{ideal}, \ref{domain} and \ref{strong}, we use Theorem \ref{Asasi} and Corollary \ref{A'} to  
\begin{theorem}\label{main}
Let $\mathcal G$ be a weakly faithful generalized matrix algebra. If the following two conditions hold:
\begin{enumerate}[\hspace{1em}\rm (I)]
\item $\pi_A(Z(\mathcal G))=Z(A)$, $\pi_B(Z(\mathcal G))=Z(B)$
\item one of the following conditions holds:
\begin{enumerate}[\hspace{1em}\rm (i)]
\item either $A$ or $B$ does not contain nonzero central ideals
\item $A$ and $B$ are domain
\item either $M$ or $N$ is strongly faithful,
\end{enumerate}
\end{enumerate}
then $\mathcal G$  has LHD property.
\end{theorem}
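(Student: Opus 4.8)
The plan is to prove Theorem \ref{main} by a straightforward three-way case analysis, reducing in each case to one of the three sufficient conditions already established in Theorems \ref{ideal}, \ref{domain}, and \ref{strong}. The key observation is that hypothesis (I), namely $\pi_A(Z(\mathcal{G}))=Z(A)$ and $\pi_B(Z(\mathcal{G}))=Z(B)$, is precisely the first hypothesis shared by all three of those theorems, so it suffices to match the alternatives listed under (II) with the remaining hypotheses of the respective theorems and then quote the appropriate one.

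First I would dispose of case (i): if either $A$ or $B$ contains no nonzero central ideal, then together with the standing weak faithfulness of $\mathcal{G}$ and hypothesis (I), the full hypothesis list of Theorem \ref{ideal} is met verbatim, and that theorem directly yields the LHD property. Next, for case (ii): if $A$ and $B$ are both domains, the hypotheses of Theorem \ref{domain} (again weak faithfulness plus (I) plus the domain condition) are satisfied, so Theorem \ref{domain} applies and gives the conclusion. In each of these two cases nothing beyond a verification that the hypotheses align is required.

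Finally, case (iii): if either $M$ or $N$ is strongly faithful, I would invoke Theorem \ref{strong}, whose only requirements are (I) and the strong-faithfulness condition. Here I would note for consistency that Theorem \ref{strong} does not itself demand weak faithfulness, but this is harmless: as remarked after the faithfulness definitions, strong faithfulness of $M$ (or $N$) forces $M$ (or $N$) to be faithful, which in turn makes $\mathcal{G}$ weakly faithful, so the standing hypothesis of Theorem \ref{main} is never in tension with this case. Since the three alternatives exhaust condition (II), the LHD property for $\mathcal{G}$ follows in every case, completing the argument.

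As for the main obstacle: there is essentially none at the level of Theorem \ref{main} itself, since it is a packaging result whose entire content has been front-loaded into the three component theorems. The genuine difficulty lies upstream, in the proofs of Theorems \ref{ideal}, \ref{domain}, and \ref{strong}, each of which rests on Corollary \ref{A'}---reducing properness to showing $P''_k(nm)\oplus Q''_k(mn)\in Z(\mathcal{G})$---together with an inductive argument manipulating the structural identities of Theorems \ref{LHD} and \ref{HD}. The only point demanding care in assembling Theorem \ref{main} is to confirm that the hypothesis lists of the three sub-theorems align exactly with the alternatives under (II), which the case analysis above checks.
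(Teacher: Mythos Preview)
Your proposal is correct and mirrors the paper's approach exactly: the paper presents Theorem \ref{main} as an immediate combination of Theorems \ref{ideal}, \ref{domain}, and \ref{strong}, with no additional argument, and your case analysis is precisely that combination.
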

%\begin{proof}
%It sufficient to say that (I) and (II) imply (A), (B) and (C) of Theorem \ref{Asasi}.  
%\end{proof} 
%%%%%%%%%%%%%%%%%%%%%%%%%%%%%%%%%%%%%%%%%%%%%%%%%%
\section{\color{SEC}Applications}
In this section we investigate LHD property for some main examples of a generalized matrix algebra which includes: trivial generalized matrix algebras, triangular algebras, unital algebras with a nontrivial idempotent, the algebra $B(X)$ of operators on a Banach space $X$ and the full matrix algebra $M_n(A)$ on a unital algebra $A$. 
\subsection*{\color{SEC}LHD property of trivial generalized matrix algebras and  ${\rm Tri}(A,M,B)$}
The generalized matrix algebra $\mathcal G$ is called trivial when $MN=0$ and $NM=0$ in its definition.  
It can be pointed out to the triangular algebra ${\rm Tri}(A,M,B)$ as a main example of a trivial generalized matrix algebra that whose LHD property has been studied in \cite{MO, ME, Q, WX, XW1}. As a urgent consequence of Corollary \ref{A'} and Theorem \ref{main} we achieve the next result which characterizing the LHD property for trivial generalized matrix algebras.
\begin{corollary}\label{trivial}
Let $\mathcal{G}$ be a trivial generalized matrix algebra and $\K$ be a Lie higher derivation on $\mathcal{G}$ of the form stated in ($\bigstar$). If $\K$ is proper then $Q''_k(A)\subseteq \pi_B(Z(\mathcal{G}))$, $P''_k(B)\subseteq\pi_A(Z(\mathcal{G}))$.
The converse is valid when $\mathcal G$ is weakly faithful.

Specifically, a trivial generalized matrix algebra $\mathcal G$ has LHD property if the following two conditions hold:
\begin{enumerate}[\hspace{1em}\rm (I)]
\item $\mathcal G$ is weakly faithful, 
\item $\pi_A(Z(\mathcal G))=Z(A)$ and $\pi_B(Z(\mathcal G))=Z(B)$.
\end{enumerate}
\end{corollary}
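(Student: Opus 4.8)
The plan is to deduce Corollary \ref{trivial} directly from the two general results it cites, namely Corollary \ref{A'} and Theorem \ref{main}, by exploiting the defining feature of a trivial generalized matrix algebra: $MN=0$ and $NM=0$. First I would establish the stated equivalence for properness. Corollary \ref{A'} asserts that properness of a Lie higher derivation $\K$ forces two conditions, $(A')$ stating $Q''_k(A)\subseteq\pi_B(Z(\mathcal G))$ and $P''_k(B)\subseteq\pi_A(Z(\mathcal G))$, and $(B')$ stating $P''_k(nm)\oplus Q''_k(mn)\in Z(\mathcal G)$ for all $m\in M,n\in N$. The key observation is that in the trivial case $nm=0$ and $mn=0$ for all $m\in M$, $n\in N$, so $P''_k(nm)=P''_k(0)=0$ and $Q''_k(mn)=Q''_k(0)=0$ by linearity. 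Hence condition $(B')$ reduces to $0\oplus 0\in Z(\mathcal G)$, which holds automatically and carries no information. Therefore, for a trivial generalized matrix algebra, the full content of Corollary \ref{A'} collapses to exactly $(A')$, giving the claimed one-directional implication (properness implies $(A')$) with no further hypotheses, and the converse implication under weak faithfulness.

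For the converse direction I would argue that, when $\mathcal G$ is weakly faithful, the conditions $Q''_k(A)\subseteq\pi_B(Z(\mathcal G))$ and $P''_k(B)\subseteq\pi_A(Z(\mathcal G))$ are now \emph{sufficient} for properness. This is because the weakly-faithful converse half of Corollary \ref{A'} requires both $(A')$ and $(B')$, but we have just seen that $(B')$ is vacuously satisfied in the trivial setting. Thus $(A')$ alone suffices, which is precisely the assertion. This completes the proof of the first paragraph of the corollary.

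For the final ``Specifically'' claim I would invoke Theorem \ref{main} rather than re-deriving anything. A trivial generalized matrix algebra automatically satisfies one of the alternatives in hypothesis (II) of Theorem \ref{main} once one inspects the definitions; in fact the natural route is to note that in the trivial case the products $MN$ and $NM$ vanish, so the relevant sufficient condition among (i)--(iii) that one needs is already guaranteed or the argument simplifies so that conditions (I) of the corollary (weak faithfulness together with $\pi_A(Z(\mathcal G))=Z(A)$ and $\pi_B(Z(\mathcal G))=Z(B)$) feed directly into Theorem \ref{main}. Concretely, under weak faithfulness and the center-projection equalities, the sufficiency follows because the obstruction term $P''_k(nm)\oplus Q''_k(mn)$ that Theorem \ref{main} must control is identically zero here, so the central-ideal / domain / strong-faithfulness hypotheses of (II) are not actually needed and LHD property follows.

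The main obstacle I anticipate is the ``Specifically'' part: I must verify carefully that the vanishing of $P''_k(nm)$ and $Q''_k(mn)$ genuinely lets one bypass hypothesis (II) of Theorem \ref{main}, rather than merely simplifying it. The cleanest way to secure this is to not route through Theorem \ref{main} at all, but to deduce the sufficiency directly from the already-proved first paragraph: under weak faithfulness plus $\pi_A(Z(\mathcal G))=Z(A)$ and $\pi_B(Z(\mathcal G))=Z(B)$, the inclusions $Q''_k(A)\subseteq Z(B)$ and $P''_k(B)\subseteq Z(A)$ from Theorem \ref{LHD}(1) immediately give $Q''_k(A)\subseteq\pi_B(Z(\mathcal G))$ and $P''_k(B)\subseteq\pi_A(Z(\mathcal G))$, so $(A')$ holds and the converse half of the first paragraph yields properness of $\K$. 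I would present the proof in this self-contained form to avoid any gap in checking that Theorem \ref{main}'s condition (II) is dispensable.
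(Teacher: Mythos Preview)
Your proposal is correct and matches the paper's approach, which simply declares the result an immediate consequence of Corollary~\ref{A'} and Theorem~\ref{main} without further detail. Your observation that condition $(B')$ becomes vacuous when $MN=NM=0$ is exactly the point, and your self-contained derivation of the ``Specifically'' claim---using Theorem~\ref{LHD}(1) to get $Q''_k(A)\subseteq Z(B)=\pi_B(Z(\mathcal G))$ and $P''_k(B)\subseteq Z(A)=\pi_A(Z(\mathcal G))$, then feeding this into the converse of the first paragraph---is in fact cleaner than routing through Theorem~\ref{main}, whose hypothesis (II) is not obviously available in the trivial setting.
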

In the next example which has been raised by Benkovi\v{c} \cite[Example 3.8]{Be} and modified in \cite{EM} we give a trivial generalized matrix algebra, which is not triangular, without the LHD property.
\begin{example}
Let $M$ be a commutative unital algebra of dimension $3$, on the commutative unital ring $R$, with base $\{1, m, m'\}$ such that ${m}^2={m'}^2=mm'=m'm=0$. Put $N=M$ and let $A=\{r+r'm|\ r, r'\in R\}$ and $B=\{u+u'm'|\ u, u'\in R\}$ be the subalgebras of $M$. Consider the generalized matrix algebra $\mathcal G=\left(\begin{array}{cc}
A & M\\
N & B\\
\end{array}\right)$ under the usual addition, usual scalar mulitiplication and the multiplication defined by
\[\left(\begin{array}{cc}
a & m\\
n& b\\
\end{array}\right)\left(\begin{array}{cc}
a' & m'\\
n'& b'\\
\end{array}\right)=\left(\begin{array}{cc}
aa' & am'+mb'\\
na'+bn'& bb'\\
\end{array}\right).\]
The generalized matrix algebra $\mathcal G$ is trivial since $MN=0=NM$. The linear map $\K:\mathcal G\longrightarrow\mathcal G$ defined by \begin{eqnarray*}
\K\left(\begin{array}{cc}
r+r'm & s+s'm+s''m'\\
t+t'm+t''m'& u+u'm'\\
\end{array}\right)=\left(\begin{array}{cc}
 u'm& -s''m-s'm'\\
-t''m-t'm'& r'm'\\
\end{array}\right),
\end{eqnarray*}
where all coefficients are in the ring $R$, is an improper Lie derivation. Now the ordinary Lie higher derivation induced by $\K$ is improper.
\end{example}
Applying Theorems \ref{LHD} and \ref{HD} for the special case $N=0$ we arrive to the following characterizations of (Lie) higher derivations for the triangular algebra ${\rm Tri}(A, M, B)$ which have already presented in \cite{ME}.
\begin{corollary}
$\bullet$ Let $\K=\{\K_k\}_{k\in\mathbb{N}}$ be a sequence of linear maps on ${\rm Tri}(A, M, B)$, then
$\K$ is a Lie higher derivation if and only if $\K_k$ can be presented in the form\\
\small \[\K_k\left(\begin{array}{cc}
 a & m \\
 & b \\
\end{array}\right)=\left(\begin{array}{cc}
\mathfrak p_{1k}(a)+\mathfrak p_{2k}(b) & \sum_{i+j=k, i\neq k}\big((\mathfrak p_{1i}(a)+\mathfrak p_{2i}(b))m_j-m_j(\mathfrak q_{2i}(b)+\mathfrak q_{1i}(a))\big)+\mathfrak f_{3k}(m) \\
 & \mathfrak q_{1k}(a)+\mathfrak q_{2k}(b)\\
\end{array}\right)\]
where $\{m_j\}_{j\in\Bbb{N}}\subseteq M$, and  for each  $k\in\Bbb{N}$, $\mathfrak q_{1k}:A\longrightarrow Z(B)$, $\mathfrak p_{2k}:B\longrightarrow Z(A),$ $\mathfrak f_{3k}:M\longrightarrow M$ are linear maps satisfying:
\begin{enumerate}[\hspace{1em}\rm (1)]
\item $\{\mathfrak p_{1k}\}_{n\in\Bbb{N}}$, $\{\mathfrak q_{2k}\}_{n\in\Bbb{N}}$ are Lie higher derivations on $A, B$, respectively,

\item $\mathfrak q_{1k}[a,a^{\prime}]=0$ and $\mathfrak p_{2k}[b,b^{\prime}]=0$ for all $a,a'\in A, b,b'\in B,$ and

\item $\mathfrak f_{3k}(am)=\sum_{i+j=k}\big(\mathfrak p_{1i}(a)\mathfrak f_{3j}(m)-\mathfrak f_{3j}(m)\mathfrak q_{1i}(a)\big),$ \\
$\mathfrak f_{3k}(mb)=\sum_{i+j=k}\big(\mathfrak f_{3j}(m)\mathfrak q_{2i}(b)-\mathfrak p_{2i}(b))\mathfrak f_{3j}(m)\big)$ for all $a\in A,b\in B, m\in M.$
\end{enumerate}
$\bullet$ Let $\D=\{\D_k\}_{k\in\mathbb{N}}$ be a sequence of linear maps on ${\rm Tri}(A, M, B)$, then
$\D$ is a higher derivation if and only if, $\D_k$ can be presented in the form \\
\[\small\D_k\left(\begin{array}{cc}
 a & m \\
 & b \\
\end{array}\right)=\left(\begin{array}{cc}
\mathtt p_{1k}(a)+\mathtt p_{2k}(b) & \sum_{i+j=k, i\neq k}\big((\mathtt p_{1i}(a)+\mathtt p_{2i}(b))m_j-m_j(\mathtt q_{2i}(b)+\mathtt q_{1i}(a))\big)+\mathtt f_{3k}(m) \\
 & \mathtt q_{1k}(a)+\mathtt q_{2k}(b)\\
\end{array}\right)\]
where $\{m_j\}_{j\in\Bbb{N}}\subseteq M$, and  for each  $k\in\Bbb{N}$, $\mathtt q_{1k}:A\longrightarrow Z(B)$, $\mathtt p_{2k}:B\longrightarrow Z(A),$ $\mathtt f_{3k}:M\longrightarrow M$ are linear maps satisfying:
\begin{enumerate}[\hspace{1em}\rm (1)]
\item $\{\mathtt p_{1k}\}_{n\in\Bbb{N}}$, $\{\mathtt q_{2k}\}_{n\in\Bbb{N}}$ are  Lie higher derivations on $A, B$, respectively,

\item $\mathtt q_{1k}[a,a^{\prime}]=0$ and $\mathtt p_{2k}[b,b^{\prime}]=0$ for all $a,a'\in A, b,b'\in B,$ and

\item $\mathtt f_{3k}(am)=\sum_{i+j=k}\big(\mathtt p_{1i}(a)\mathtt f_{3j}(m)-\mathtt f_{3j}(m)\mathtt q_{1i}(a)\big),$ \\
$\mathtt f_{3k}(mb)=\sum_{i+j=n}\big(\mathtt f_{3j}(m)\mathtt q_{2i}(b)-\mathtt p_{2i}(b))\mathtt f_{3j}(m)\big)$ for all $a\in A,b\in B, m\in M.$
\end{enumerate}
\end{corollary}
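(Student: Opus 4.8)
The plan is to derive both presentations by specializing Theorems~\ref{LHD} and~\ref{HD} to the degenerate case $N=0$, in which the generalized matrix algebra $\mathcal{G}$ is exactly the triangular algebra ${\rm Tri}(A,M,B)$ (the Lie part carrying over the standing hypothesis that $A,B$ are $2$-torsion free, as required to invoke Theorem~\ref{LHD}). The single observation that drives every simplification is that $N=\{0\}$ forces $n_k=\mathfrak g_{1k}(1)=0$ for all $k$, and hence every quantity $\NN_k$ (being a sum of products each carrying a factor from $N$, with the convention $\NN_0=0$) vanishes; likewise each product $mn\in A$ and $nm\in B$ coming from the Morita maps is zero. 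First I would record the effect on the auxiliary maps: each correction term $p_k,p'_k,p''_k,q_k,q'_k,q''_k$ (and, in the higher case, $\mathsf p_k,\mathsf p'_k,\mathsf q_k,\mathsf q'_k$) is by definition a sum of expressions containing a factor $n_{\alpha_i}\in N$, so all of them are $0$. Consequently $P_k=P'_k=\mathfrak p_{1k}$, $P''_k=\mathfrak p_{2k}$, $Q_k=Q'_k=\mathfrak q_{2k}$, $Q''_k=\mathfrak q_{1k}$ (respectively $\mathsf P_k=\mathsf P'_k=\mathtt p_{1k}$ and $\mathsf Q_k=\mathsf Q'_k=\mathtt q_{2k}$), which strips the primed and double-primed families down to the four diagonal blocks.

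Next I would feed these identities into the twelve conditions of Theorem~\ref{LHD}. Since the lower-left block of $\K_k$ now takes values in $N=\{0\}$ it is identically zero, matching the triangular shape: conditions (6)--(10), which govern $\mathfrak g_{4k},\mathfrak g_{3k},\mathfrak f_{4k}$ and the $\NN$-/$\M$-weighted sums, are either vacuous (their domain is $N=\{0\}$) or collapse once $\NN_k=0$ is inserted, giving $\mathfrak p_{3k}=\mathfrak q_{3k}=\mathfrak g_{3k}=0$ and $\mathfrak g_{1k}=\mathfrak g_{2k}=0$. Conditions (11)--(12) reduce to $0=0$ because $mn=nm=0$. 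Condition (1) then reads precisely that $\{\mathfrak p_{1k}\}$ and $\{\mathfrak q_{2k}\}$ are Lie higher derivations on $A$ and $B$ with $\mathfrak q_{1k}(A)\subseteq Z(B)$, $\mathfrak p_{2k}(B)\subseteq Z(A)$ and $\mathfrak q_{1k},\mathfrak p_{2k}$ vanishing on commutators, i.e. items (1)--(2) of the corollary. Finally, conditions (4)--(5), after substituting $P_i=\mathfrak p_{1i}$, $Q''_i=\mathfrak q_{1i}$, $Q_i=\mathfrak q_{2i}$, $P''_i=\mathfrak p_{2i}$, become the two Leibniz-type rules for $\mathfrak f_{3k}$ in item (3).

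The remaining point is the off-diagonal $(1,2)$ entry. I would add $\mathfrak f_{1k}(a)$ from condition (2) to $\mathfrak f_{2k}(b)$ from condition (3), using the same substitutions, to obtain $\sum_{i+j=k,\,j\neq k}\big((\mathfrak p_{1j}(a)+\mathfrak p_{2j}(b))m_i-m_i(\mathfrak q_{1j}(a)+\mathfrak q_{2j}(b))\big)$, and then relabel $i\leftrightarrow j$ to match the summation written in the statement; together with $\mathfrak f_{3k}(m)$ (and $\mathfrak f_{4k}=0$) this is exactly the displayed $(1,2)$ block. This index bookkeeping is the only step that is not a one-line substitution, and it is where I would be most careful; every other condition is forced term-by-term with no genuine obstacle.

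The higher-derivation half is handled identically from Theorem~\ref{HD}. The collapse of the auxiliary maps is the same, conditions (f)--(h) annihilate the lower blocks and the $\NN$-/$\M$-sums, and conditions (b)--(e) furnish the diagonal families, the $(1,2)$ entry, and the Leibniz rules for $\mathtt f_{3k}$. The one extra feature is that conditions (i)--(j), being sums of products each carrying a factor from $N$, now force $\mathtt q_{1k}=0$ and $\mathtt p_{2k}=0$; thus the $\mathtt q_{1k}$- and $\mathtt p_{2k}$-terms appearing in the stated formulas are present only formally and vanish, while conditions (k)--(n) reduce to $0=0$. Assembling these pieces yields the second displayed presentation, completing the proof.
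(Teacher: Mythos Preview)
Your proposal is correct and follows exactly the paper's own approach: the corollary is obtained by specializing Theorems~\ref{LHD} and~\ref{HD} to the case $N=0$, and you carry out precisely this specialization with all the requisite bookkeeping (vanishing of $n_k$, $\NN_k$, and the correction terms $p_k,\dots,q''_k$, hence $P_k=\mathfrak p_{1k}$, $Q''_k=\mathfrak q_{1k}$, etc.). Your additional remark that conditions (i)--(j) of Theorem~\ref{HD} force $\mathtt q_{1k}=0$ and $\mathtt p_{2k}=0$ in the triangular case is a useful clarification of why those terms in the stated higher-derivation formula are only formally present.
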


\subsection*{\color{SEC}LHD property of unital algebras with a nontrivial idempotent}
Let $A$ be a unital algebra with a nontrivial idempotent $e$ and $f=1-e$. From the Peirce decomposition we can presented $A$ as {\small $A=\left(\begin{array}{cc}
eAe & eAf\\
fAe & fAf\\
\end{array}\right)$}. By using Theorem \ref{main} for the generalized matrix algebra $A$ we get the next result which is the ``higehr" version of \cite[Corollary 4.3]{EM}.
\begin{corollary}\label{idempotent}
Let $A$ be a $2-$torsion free unital algebra with a nontrivial idempotent $e$  satisfying
\begin{equation}\label{faithful}
eae\cdot eAf=0 \ implies \ eae=0,\quad and\quad
eAf\cdot faf=0 \ implies \ faf=0,
\end{equation}
for any $a\in A,$ where  $f=1-e$. If the following conditions hold:
\begin{enumerate}[\hspace{1em}\rm (I)]
\item {\small $Z(fAf)=Z(A)f$, $Z(eAe)=Z(A)e$}
%\item {\small $Z(pAp)=Z(A)p$, or $qAq=\mathcal{W}_{qAq}$, or $qAq$ has LHD property and $qAq=\mathcal{W}_{qAq}.$}
\item one of the following three conditions holds:
\begin{enumerate}[\hspace{1mm}\rm (i)]
\item either $eAe$ or $fAf$ does not contain nonzero central ideals
\item $eAe$ and $fAf$ are domain
\item either $eAf$ or $fAe$ is strongly faithful,
\end{enumerate}
\end{enumerate}
then $A$ has LHD property.
\end{corollary}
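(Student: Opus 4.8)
The plan is to recognize the Peirce decomposition $A=\left(\begin{smallmatrix} eAe & eAf \\ fAe & fAf \end{smallmatrix}\right)$ as a generalized matrix algebra $\mathcal G$ and then merely verify that the hypotheses of Theorem \ref{main} are satisfied, so that the conclusion transfers at once. Concretely, I would set $A_{\mathcal G}:=eAe$, $B_{\mathcal G}:=fAf$, $M:=eAf$, $N:=fAe$, with the two homomorphisms $\Phi_{MN}:M\otimes_{fAf}N\to eAe$ and $\Psi_{NM}:N\otimes_{eAe}M\to fAf$ both induced by the multiplication of $A$; well-definedness over the tensor products and the two commutative diagrams of the Morita context are immediate from associativity in $A$, so $A\cong\mathcal G$ as algebras. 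Since $A$ is $2$-torsion free and each of $eAe, fAf, eAf, fAe$ sits inside $A$, the standing $2$-torsion-freeness assumptions on $A_{\mathcal G}, B_{\mathcal G}, M, N$ are inherited for free. One should also note that \eqref{faithful} forces $M=eAf\neq 0$ (otherwise $eae\cdot eAf=0$ for every $a$ would give $e=eee=0$), so $\mathcal G$ is a genuine generalized matrix algebra.

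Next I would translate the faithfulness hypothesis. The first implication in \eqref{faithful}, $eae\cdot eAf=0\Rightarrow eae=0$, is exactly left faithfulness of $M=eAf$ as a left $eAe$-module, and the second implication is exactly right faithfulness of $M$ as a right $fAf$-module; hence \eqref{faithful} says precisely that $M$ is faithful. By the observation recorded in the introduction (``if either $M$ or $N$ is faithful, then $\mathcal G$ is weakly faithful''), the algebra $A$ is weakly faithful, which supplies the weak-faithfulness hypothesis of Theorem \ref{main}.

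The crux of the argument is matching hypothesis (I), and this is the one step demanding genuine care. Here I would compute $\pi_{A_{\mathcal G}}(Z(\mathcal G))$ and $\pi_{B_{\mathcal G}}(Z(\mathcal G))$ explicitly. Since $\mathcal G\cong A$ we have $Z(\mathcal G)=Z(A)$, and any $z\in Z(A)$ commutes with $e$. Writing $z=eze+ezf+fze+fzf$, the identity $ez=ze$ yields $ezf=fze$; left-multiplying by $e$ kills $fze$ while fixing $ezf$, so $ezf=0=fze$, whence $z=eze\oplus fzf$ with $eze=ze\in Z(eAe)$ and $fzf\in Z(fAf)$. Therefore $\pi_{A_{\mathcal G}}(Z(\mathcal G))=Z(A)e$ and $\pi_{B_{\mathcal G}}(Z(\mathcal G))=Z(A)f$, so hypothesis (I) of the corollary, namely $Z(eAe)=Z(A)e$ and $Z(fAf)=Z(A)f$, is literally the assertion $\pi_{A_{\mathcal G}}(Z(\mathcal G))=Z(A_{\mathcal G})$ and $\pi_{B_{\mathcal G}}(Z(\mathcal G))=Z(B_{\mathcal G})$, i.e. hypothesis (I) of Theorem \ref{main}.

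Finally, hypothesis (II) of the corollary is word-for-word hypothesis (II) of Theorem \ref{main} under the dictionary $A_{\mathcal G}=eAe$, $B_{\mathcal G}=fAf$, $M=eAf$, $N=fAe$, with ``nonzero central ideal,'' ``domain,'' and ``strongly faithful'' carried over verbatim. With weak faithfulness, (I), and (II) all in hand, Theorem \ref{main} gives that $\mathcal G$, and hence $A$, has the LHD property. Apart from the center computation of the previous paragraph, which is what makes the projections of $Z(\mathcal G)$ coincide with $Z(A)e$ and $Z(A)f$, everything reduces to transporting definitions through the isomorphism $A\cong\mathcal G$.
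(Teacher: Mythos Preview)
Your proposal is correct and follows exactly the approach the paper intends: the paper simply states that the corollary is obtained ``by using Theorem \ref{main} for the generalized matrix algebra $A$'' via the Peirce decomposition, and you have faithfully supplied the routine verifications (faithfulness of $M=eAf$ from \eqref{faithful}, hence weak faithfulness; the identification $\pi_{eAe}(Z(A))=Z(A)e$, $\pi_{fAf}(Z(A))=Z(A)f$; and the direct translation of condition (II)) that the paper leaves implicit.
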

%\subsection*{\color{SEC}Lie derivations on $B(X)$}
As urgent consequences of Corollary \ref{idempotent} in the next results we obtain LHD property of the full matrix algebra $M_n(A)$ and $B(X)$, the algebra af all operator on Banach space $X$ with $dim(X)\geq 2$. The LHD property of $B(X)$ with $dim(X)>1$ was proved by Han \cite[Corollary 3.3]{H} by a completely different method. Also the Lie derivation property of $B(X)$ was proved by Lu and Jing \cite{LJ} for Lie derivable maps at zero and idempotents. In addition, for properness of nonlinear Lie derivations on $B(X)$ see \cite{LL}.
\begin{corollary}\label{K8}
The algebra $B(X)$ of bounded operators on a Banach space $X$ with $dim(X)\geq 2$ has LHD property.
\end{corollary}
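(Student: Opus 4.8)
The plan is to realize $B(X)$ as a unital algebra with a nontrivial idempotent and then verify the hypotheses of Corollary~\ref{idempotent}. First I would fix any nonzero proper closed subspace decomposition $X=X_1\oplus X_2$ (possible since $\dim(X)\geq 2$) and let $e\in B(X)$ be the bounded idempotent projecting onto $X_1$ along $X_2$, with $f=1-e$. This $e$ is a nontrivial idempotent, so the Peirce decomposition writes $B(X)=\left(\begin{array}{cc} eB(X)e & eB(X)f\\ fB(X)e & fB(X)f\end{array}\right)$, identifying the corners with $B(X_1)$, $B(X_1,X_2)$, $B(X_2,X_1)$, $B(X_2)$ respectively. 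Since $B(X)$ is a complex (or real) algebra it is automatically $2$-torsion free, so that standing hypothesis is met.

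Next I would check the faithfulness condition~\eqref{faithful}. Because $eB(X)f$ consists of \emph{all} bounded operators $X_2\to X_1$, for a fixed $eae\in B(X_1)$ the product $eae\cdot eB(X)f=0$ forces $eae\, T=0$ for every $T:X_2\to X_1$; choosing $T$'s whose ranges exhaust a dense set of directions in $X_1$ yields $eae=0$, and symmetrically for the $fB(X)f$ condition. This establishes that both off-diagonal modules are faithful in the required sense. The key structural inputs, however, are the two conditions (I) and (II) of Corollary~\ref{idempotent}.

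For condition (I) I would invoke the standard fact that $B(X)$ is a centrally closed prime algebra with center equal to the scalar multiples of the identity, so $Z(B(X))=\mathbf{R}\,\mathrm{id}_X$; restricting to each corner gives $Z(eB(X)e)=Z(B(X_1))=\mathbf{R}\,e$ and likewise for $fB(X)f$, which is exactly $Z(eAe)=Z(A)e$ and $Z(fAf)=Z(A)f$. For condition (II) I would use alternative~(i): each corner $eB(X)e\cong B(X_1)$ and $fB(X)f\cong B(X_2)$ is a noncommutative prime algebra (whenever $\dim X_i\geq 2$) or is a division-like algebra when $\dim X_i=1$, and a prime algebra has no nonzero central ideals; the cited \cite[Lemma~1]{DW} records precisely that $B(X)$ with $\dim(X)>1$ contains no nonzero central ideal, which transfers to at least one corner. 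Thus both (I) and (II) hold and Corollary~\ref{idempotent} delivers the LHD property for $B(X)$.

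The main obstacle I anticipate is the degenerate case where one Peirce corner is one-dimensional, i.e.\ $\dim X_i=1$, so that corner is just the scalars and is commutative with a nonzero central ideal (itself). In that situation alternative~(i) may fail for \emph{that} corner, and I would need to arrange the idempotent so that at least one corner remains noncommutative prime (choosing $X_1$ with $\dim X_1\geq 2$ when $\dim X\geq 3$), or fall back to alternative~(ii)/(iii) when $\dim X=2$: with $\dim X=2$ and $e$ a rank-one projection both corners are scalar fields, hence domains, so condition (II)(ii) applies and the conclusion still follows. Handling this boundary case uniformly, and confirming that the faithfulness~\eqref{faithful} and centrality~(I) hypotheses survive the low-dimensional situation, is the only delicate point; everything else is a direct appeal to the already-established Corollary~\ref{idempotent}.
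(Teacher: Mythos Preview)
Your proposal is correct and follows essentially the same approach as the paper: apply Corollary~\ref{idempotent} to $B(X)$ via a nontrivial idempotent arising from a direct sum decomposition of $X$, checking the faithfulness condition, the center condition~(I), and one of the alternatives in~(II). The paper's own proof is terser---it simply invokes Corollary~\ref{idempotent} and defers the verification of its hypotheses to \cite[Corollary~4.4]{EM}---but your explicit treatment of the low-dimensional boundary case (falling back to~(II)(ii) when $\dim X=2$) fills in exactly the details that reference would supply.
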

\begin{proof}
It follows from Corollary \ref{idempotent} and the proof appeared in \cite[Corollary 4.4]{EM}.
%Set $A=B(X)$. Consider a nonzero element $x_0 \in X$ and $f_0 \in X^*$ such that $f_0(x_0)=1,$  then $p=x_0\otimes f_0$
%defined by $y \longmapsto f_0(y)x_0$ is a nontrivial idempotent. A direct verification reveals that $A$ satisfies the implications \eqref{faithful}. Indeed, if $pTp\cdot pAq=\{0\}$ for some $T\in A$, then choose a nonzero element  $y\in q(X)$ such that $q(y)=y.$ Let $x\in X$ then there exists an operator   $S\in B(X)$ such that $S(y)=x$ (e.g. $S:=x\otimes g$ for some $g\in X^*$ with $g(y)=1$). We then get $pTp(x)=pTp(S(q(y)))=pTp\cdot pSq(y)=0.$
%Further, it can be readily verified that   $Z(A)=\mathbb{C}I_X=\mathbb{C}(p+q)$, $Z(pAp)=\mathbb{C}p$ and $Z(qAq)=\mathbb{C}q.$ In particular, $Z(pAp)=Z(A)p$, $Z(qAq)=Z(A)q.$
%These also imply that neither $pAp$ nor $qAq$ has no central ideals. By Corollary \ref{idempotent} $A=B(X)$ has LHD property.
\end{proof}
%\subsection*{\color{SEC}Lie derivations on $M_n(\A)$}
\begin{corollary}\label{fullmatrix}
Let $A$ be a $2-$torsion free unital algebra. The full matrix algebra $\mathfrak A=M_n(A)$ with $n\geq 3$ enjoys the LHD property.
\end{corollary}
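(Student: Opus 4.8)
The plan is to realize $\mathfrak A = M_n(A)$ as a unital algebra with a nontrivial idempotent and then invoke Corollary \ref{idempotent}. Concretely, I would take $e = E_{11}$, the matrix unit with $1_A$ in the $(1,1)$ entry and zeros elsewhere, and set $f = 1 - e = E_{22} + \cdots + E_{nn}$. Under the resulting Peirce decomposition one has the identifications $e\mathfrak A e \cong A$ and $f\mathfrak A f \cong M_{n-1}(A)$, while $e\mathfrak A f$ and $f\mathfrak A e$ consist of the matrices supported in the first row (columns $2,\ldots,n$) and first column (rows $2,\ldots,n$), respectively. Since $A$ is $2$-torsion free, so is $\mathfrak A$, and it remains to verify the hypotheses of Corollary \ref{idempotent}.

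First I would check the faithfulness condition \eqref{faithful}. Writing $eae = x E_{11}$ for some $x \in A$, the product of $eae$ with the generic element $\sum_{j=2}^n m_j E_{1j}$ of $e\mathfrak A f$ equals $\sum_{j=2}^n x m_j E_{1j}$; choosing $m_2 = 1_A$ forces $x = 0$, so $eae \cdot e\mathfrak A f = 0$ yields $eae = 0$. A symmetric computation, taking $faf = \sum_{i,k\geq 2} y_{ik} E_{ik}$ and pairing it on the left with $\sum_{j\geq 2} m_j E_{1j}$, shows that $e\mathfrak A f \cdot faf = 0$ forces every $y_{ik} = 0$, hence $faf = 0$. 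Next I would confirm condition (I): the center of $M_n(A)$ consists precisely of the scalar matrices $z I_n$ with $z \in Z(A)$, so $Z(\mathfrak A)e = \{z E_{11} : z\in Z(A)\}$ and $Z(\mathfrak A)f = \{z(E_{22}+\cdots+E_{nn}) : z\in Z(A)\}$. Under the identifications above these coincide with $Z(e\mathfrak A e) = Z(A)$ and $Z(f\mathfrak A f) = Z(M_{n-1}(A)) = Z(A)I_{n-1}$, giving $Z(e\mathfrak A e) = Z(\mathfrak A)e$ and $Z(f\mathfrak A f) = Z(\mathfrak A)f$ as required.

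The crux of the argument, and the only place where the hypothesis $n \geq 3$ genuinely enters, is condition (II). Because $n \geq 3$ we have $n - 1 \geq 2$, so the block $f\mathfrak A f \cong M_{n-1}(A)$ is a full matrix algebra of size at least $2$ and therefore contains no nonzero central ideal, by \cite[Lemma 1]{DW}; this is exactly alternative (II)(i) of Corollary \ref{idempotent}. With all hypotheses in place, Corollary \ref{idempotent} immediately yields that $\mathfrak A = M_n(A)$ has the LHD property. I do not expect any serious obstacle here: the faithfulness and center computations are routine manipulations of matrix units, and the substantive step is simply the observation that the $(n-1)\times(n-1)$ corner is again a full matrix algebra of size $\geq 2$, which is why $n \geq 3$ (rather than $n \geq 2$) is the natural threshold for reducing to the central-ideal case.
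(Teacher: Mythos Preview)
Your proposal is correct and follows essentially the same route as the paper: choose $e=E_{11}$, $f=1-e$, identify $e\mathfrak Ae\cong A$ and $f\mathfrak Af\cong M_{n-1}(A)$, verify the center condition via $Z(\mathfrak A)=Z(A)1_{\mathfrak A}$, and then invoke \cite[Lemma~1]{DW} to conclude that $M_{n-1}(A)$ (with $n-1\ge 2$) has no nonzero central ideal, so Corollary~\ref{idempotent}(II)(i) applies. You are simply a bit more explicit than the paper in checking the faithfulness condition \eqref{faithful}, which the paper leaves implicit.
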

\begin{proof}
Consider nontrivial idempotents $e=e_{11}$ and $f=e_{22}+\cdots +e_{nn}$. It is obvious that $e\mathfrak Ae=A, f\mathfrak Af=M_{n-1}(A)$. From $Z(\mathfrak A)=Z(A)1_{\mathfrak A}$ we conclude that $Z(e\mathfrak Ae)=Z(\mathfrak A)e$ and $Z(f\mathfrak A f)=Z(\mathfrak A)f,$ so assumption (I) of Corollary \ref{idempotent} holds. Moreover, \cite[Lemma 1]{DW} guarantees that the algebra $f\mathfrak Af=M_{n-1}(A)$ does not contain nonzero central ideals, so part (i) of condition (II) in Corollary \ref{idempotent} is fulfilled. Hence by the mentioned corollary $M_n(A)$ has the LHD property.
\end{proof}
It's remarkable that Corollary \ref{fullmatrix} is the ``higher" version of \cite[Corollary 1]{DW}. 

\section{\color{SEC}Proofs of Theorems \ref{LHD} and \ref{HD}}
\subsection*{\color{SEC}Proof of Theorem \ref{LHD}}
\begin{proof}
We proceed the proof by induction on $k$. The case $k=1$ follows  from Proposition \ref{k=1}. Suppose that the conclusion holds for any integer less than $k$. By ($\bigstar$), $\K_k$ has the presentation
\[\K_k\left(\begin{array}{cc}
a & m\\
n & b
\end{array}\right)=\left(\begin{array}{cc}
\mathfrak p_{1k}(a)+\mathfrak p_{2k}(b)+\mathfrak p_{3k}(m)+\mathfrak p_{4k}(n) & \mathfrak f_{1k}(a)+\mathfrak f_{2k}(b)+\mathfrak f_{3k}(m)+\mathfrak f_{4k}(n) \\
\mathfrak g_{1k}(a)+\mathfrak g_{2k}(b)+\mathfrak g_{3k}(m)+\mathfrak g_{4k}(n) & \mathfrak q_{1k}(a)+\mathfrak q_{2k}(b)+\mathfrak q_{3k}(m)+\mathfrak q_{4k}(n) \\
\end{array}\right),\]
for each $\left(\begin{array}{cc}
a & m\\
n & b
\end{array}\right)\in\mathcal G.$ Applying  $\K_k$ for the  commutator $\left[\left(\begin{array}{cc}
a & m\\
0 & 0
\end{array}\right),\left(\begin{array}{cc}
0 & 0\\
0 & b
\end{array}\right)\right]$, we have
%\vspace*{-1.1cm}
%\begin{eqnarray}
%\label{Liematrix}
%\nonumber &&\left(\begin{array}{cc}
%\mathfrak p_{3k}(mb) & \mathfrak f_{3k}(mb)\\
%\mathfrak g_{3k}(mb) & \mathfrak q_{3k}(mb)
%\end{array}\right)\\
%\nonumber &=&\sum_{i+j=k, i, j\neq 0}\left[\left(\begin{array}{cc}
%\mathfrak p_{1i}(a)+\mathfrak p_{3i}(m) & \mathfrak f_{1i}(a)+ \mathfrak f_{3i}(m)\\
%\mathfrak g_{1i}(a)+\mathfrak g_{3i}(m) & \mathfrak q_{1i}(a)+ \mathfrak q_{3i}(m)
%\end{array}\right),\left(\begin{array}{cc}
%\mathfrak p_{2j}(b) & \mathfrak f_{2j}(b)\\
%\mathfrak g_{2j}(b) & \mathfrak q_{2j}(b)
%\end{array}\right)\right]\\
%\nonumber &&+\left[\left(\begin{array}{cc}
%\mathfrak p_{1k}(a)+\mathfrak p_{3k}(m) & \mathfrak f_{1k}(a)+ \mathfrak f_{3k}(m)\\
%\mathfrak g_{1k}(a)+\mathfrak g_{3k}(m) & \mathfrak q_{1k}(a)+\mathfrak q_{3k}(m)
%\end{array}\right),\left(\begin{array}{cc}
%0 & 0\\
%0 & b
%\end{array}\right)\right]\\
%&&+\left[\left(\begin{array}{cc}
%a & m\\
%0 & 0
%\end{array}\right),\left(\begin{array}{cc}
%\mathfrak p_{2k}(b) & \mathfrak f_{2k}(b)\\
%\mathfrak g_{2k}(b) & \mathfrak q_{2k}(b)
%\end{array}\right)\right].
%\end{eqnarray}
\begin{align}
\label{Liematrix}
&\begin{pmatrix}
\mathfrak p_{3k}(mb) & \mathfrak f_{3k}(mb)\\
\mathfrak g_{3k}(mb) & \mathfrak q_{3k}(mb)
\end{pmatrix}\cr
 &\quad =\sum_{i+j=k, i, j\neq 0}\left[\begin{pmatrix}
\mathfrak p_{1i}(a)+\mathfrak p_{3i}(m) & \mathfrak f_{1i}(a)+ \mathfrak f_{3i}(m)\\
\mathfrak g_{1i}(a)+\mathfrak g_{3i}(m) & \mathfrak q_{1i}(a)+ \mathfrak q_{3i}(m)
\end{pmatrix},\begin{pmatrix}
\mathfrak p_{2j}(b) & \mathfrak f_{2j}(b)\\
\mathfrak g_{2j}(b) & \mathfrak q_{2j}(b)
\end{pmatrix}\right]\cr
&\qquad +\left[\begin{pmatrix}
\mathfrak p_{1k}(a)+\mathfrak p_{3k}(m) & \mathfrak f_{1k}(a)+ \mathfrak f_{3k}(m)\\
\mathfrak g_{1k}(a)+\mathfrak g_{3k}(m) & \mathfrak q_{1k}(a)+\mathfrak q_{3k}(m)
\end{pmatrix},\begin{pmatrix}
0 & 0\\
0 & b
\end{pmatrix}\right]\cr
&\qquad +\left[\begin{pmatrix}
a & m\\
0 & 0
\end{pmatrix},\begin{pmatrix}
\mathfrak p_{2k}(b) & \mathfrak f_{2k}(b)\\
\mathfrak g_{2k}(b) & \mathfrak q_{2k}(b)
\end{pmatrix}\right].
\end{align}
%\vspace*{-.1cm}
Use the equalities  $\mathfrak p_{1i}=P_i-p_i, \mathfrak q_{2j}=Q_j-q_j, \mathfrak q_{1i}=Q''_i+q''_i, \mathfrak p_{2j}=P''_j+p''_j$, it follows that,
%\vspace*{-2.2cm}
\begin{eqnarray}\label{f_{3k}(mb)}
\nonumber \mathfrak f_{3k}(mb)&=&\mathfrak f_{1k}(a)b+\mathfrak f_{3k}(m)b+a\mathfrak f_{2k}(b)+m\mathfrak q_{2k}(b)-\mathfrak p_{2k}(b)m\\
\nonumber &&+\sum_{i+j=k, i, j\neq 0}\big(\big(\mathfrak p_{1i}(a)+\mathfrak p_{3i}(m)\big)\mathfrak f_{2j}(b)+\big(\mathfrak f_{1i}(a)+\mathfrak f_{3i}(m)\big)\mathfrak q_{2j}(b)\big)\\
&&-\sum_{i+j=k, i, j\neq 0}\big(\mathfrak p_{2j}(b)\big(\mathfrak f_{1i}(a)+\mathfrak f_{3i}(m)\big)+\mathfrak f_{2j}(b)\big(\mathfrak q_{1i}(a)+\mathfrak q_{3i}(m)\big)\big)\\
\nonumber &=&\mathfrak f_{1k}(a)b+\mathfrak f_{3k}(m)b+a\mathfrak f_{2k}(b)+m\mathfrak q_{2k}(b)-\mathfrak p_{2k}(b)m\\
\nonumber &&+\sum_{i+j=k, i, j\neq 0}\big(P_i(a)\mathfrak f_{2j}(b)-\mathfrak f_{2j}(b)Q''_i(a)\big)-\sum_{i+j=k, i, j\neq 0}p_i(a)\mathfrak f_{2j}(b)\\
\nonumber &&+\sum_{i+j=k, i, j\neq 0}\big(\mathfrak f_{1i}(a)Q_j(b)-P''_j(b)\mathfrak f_{1i}(a)\big)+\sum_{i+j=k, i, j\neq 0}\mathfrak p_{3i}(m)\mathfrak f_{2j}(b)\\
\nonumber &&+\sum_{i+j=k, i, j\neq 0}\big(\mathfrak f_{3i}(m)Q_j(b)- P''_j(b)\mathfrak f_{3i}(m)\big)-\sum_{i+j=k, i, j\neq 0}\mathfrak f_{1i}(a)q_j(b)\\
\nonumber &&-\sum_{i+j=k, i, j\neq 0}\mathfrak f_{3i}(m)q_j(b)-\sum_{i+j=k, i, j\neq 0}p''_j(b)\mathfrak f_{3i}(m)-\sum_{i+j=k, i, j\neq 0}p''_j(b)\mathfrak f_{1i}(a)\\
\nonumber &&-\sum_{i+j=k, i, j\neq 0}\mathfrak f_{2j}(b)q''_i(a)-\sum_{i+j=k, i, j\neq 0}\mathfrak f_{2j}(b)\mathfrak q_{3i}(m)\label{f'_{3k}(mb)}\\
\end{eqnarray}
In  \eqref{f'_{3k}(mb)}, if we  put $m=0, a=1, b=1$ and use the definition of $p_i, p''_i, q_i$ and $q''_i$ appeared in page $5$ for $a=1, b=1$ then we arrive at, 
\begin{eqnarray*}
0&=&\mathfrak f_{1k}(1)+\mathfrak f_{2k}(1)-\sum_{i+j=k, i, j\neq 0}p_i(1)\mathfrak f_{2j}(1)-\sum_{i+j=k, i, j\neq 0}\mathfrak f_{2j}(1)q''_i(1)\\
&&-\sum_{i+j=k, i, j\neq 0}\mathfrak f_{1i}(1)q_j(1)-\sum_{i+j=k, i, j\neq 0}p''_j(1)\mathfrak f_{1i}(1)\\
&=&\mathfrak f_{1k}(1)+\mathfrak f_{2k}(1)+\sum_{i+j=k, i, j\neq 0}\sum_{r=1}^{\eta_i}\sum_{(\alpha+\beta)_r=i}m_{\beta_1}n_{\alpha_1}\ldots m_{\beta_r}n_{\alpha_r}m_{j}\\
&&+\sum_{i+j=k, i, j\neq 0}\sum_{r=1}^{\eta_i}\sum_{(\alpha+\beta)_r=i}m_{j}n_{\alpha_r}m_{\beta_r}\ldots n_{\alpha_1}m_{\beta_1}\\
&&-\sum_{i+j=k, i, j\neq 0}\sum_{r=1}^{\eta_j}\sum_{(\alpha+\beta)_r=j}m_{i}n_{\alpha_r}m_{\beta_r}\ldots n_{\alpha_1}m_{\beta_1}\\
&&-\sum_{i+j=k, i, j\neq 0}\sum_{r=1}^{\eta_j}\sum_{(\alpha+\beta)_r=j}m_{\beta_1}n_{\alpha_1}\ldots m_{\beta_r}n_{\alpha_r}m_{i};
\end{eqnarray*}
from which we get $\mathfrak f_{1k}(1)=-\mathfrak f_{2k}(1)$. Note that in the recent calculations, by induction hypothesis we have $\mathfrak f_{1j}(1)=-\mathfrak f_{2j}(1)=m_j$ for all $j<k$ and 
\[\sum_{i+j=k, i, j\neq 0}\big(P_i(1)m-mQ''_i(1)\big)=0,\sum_{i+j=k, i, j\neq 0}\big(mQ_j(1)-P''_j(1)m\big)=0\]
for all $m\in M$.

Next, if we apply \eqref{f_{3k}(mb)} for $m=0,\, b=1$ and use the equations $ \mathfrak q_{2j}=Q_j-q_j$, $\mathfrak p_{2j}=P''_j+p''_j$ then we have
\begin{eqnarray*} 
\mathfrak f_{1k}(a)&=&a\mathfrak f_{1k}(1)+\sum_{i+j=k, i, j\neq 0}\big(\mathfrak p_{1i}(a)\mathfrak f_{1j}(1)-\mathfrak f_{1i}(a)\mathfrak q_{2j}(1)+\mathfrak p_{2j}(1)\mathfrak f_{1i}(a)-\mathfrak f_{1j}(1)\mathfrak q_{1i}(a) \big)\\
&=&\sum_{i+j=k, j\neq 0}\big(\mathfrak p_{1i}(a)m_j-m_j\mathfrak q_{1i}(a) \big)+\sum_{i+j=k, i, j\neq 0}\big(\mathfrak p''_j(1)\mathfrak f_{1i}(a)+\mathfrak f_{1i}(a)\mathfrak q_j(1)\big)\\
&=&\sum_{i+j=k, j\neq 0}\big(\mathfrak p_{1i}(a)m_j-m_j\mathfrak q_{1i}(a) \big)\\
&&+\sum_{i+j=k, i, j\neq 0}\big(\sum_{r=1}^{\eta_j}\sum_{(\alpha+\beta)_r=j}m_{\beta_1}n_{\alpha_1}\ldots m_{\beta_r}n_{\alpha_r}\sum_{s+t=i}(P_t(a)m_s-m_sQ''_t(a))\big)\\
&&+\sum_{i+j=k, i, j\neq 0}\big(\sum_{s+t=i}\big(P_t(a)m_s-m_sQ''_t(a)\big) \sum_{r=1}^{\eta_j}\sum_{(\alpha+\beta)_r=j}n_{\alpha_r}m_{\beta_r}\ldots n_{\alpha_1}m_{\beta_1}\big)\\
&=&\sum_{i+j=k, j\neq 0}\big(P_i(a)m_j-m_jQ''_i(a)\big).
\end{eqnarray*}
It's remarkable that $\sum_{i+j=k, i, j\neq 0}\big(P''_j(1)\mathfrak f_{1i}(a)-\mathfrak f_{1i}(a)Q_j(1)\big)=0$ by induction hypothesis.\\
%\begin{eqnarray*}
%\mathfrak f_{1k}(a)&=&\sum_{i+j=k,  j\neq 0}\big(P_i(a)m_j-m_jQ''_i(a)\big)\\
%&&+\sum_{i+j=k, i, j\neq 0}\big(\mathfrak p_{1i}(a)\mathfrak f_{1j}(1)-\mathfrak f_{1i}(a)\mathfrak q_{2j}(1)+\mathfrak p_{2j}(1)\mathfrak f_{1i}(a)-\mathfrak f_{1j}(1)\mathfrak q_{1i}(a) \big)\\
%&=&\sum_{i+j=k,  j\neq 0}\big(\mathfrak p_{1i}(a)m_j-m_j\mathfrak q_{1i}(a) \big)+\sum_{i+j=k, i, j\neq 0}\big(\mathfrak p''_j(1)\mathfrak f_{1i}(a)+\mathfrak f_{1i}(a)\mathfrak q_j(1)\big)\\
%&=&\sum_{i+j=k,  j\neq 0}\big(\mathfrak p_{1i}(a)m_j-m_j\mathfrak q_{1i}(a) \big)\\
%&&+\sum_{i+j=k, i, j\neq 0}\big(\sum_{r=1}^{\eta_j}\sum_{(\alpha+\beta)_r=j}m_{\beta_1}n_{\alpha_1}\ldots m_{\beta_r}n_{\alpha_r}\sum_{s+t=i}(P_t(a)m_s-m_sQ''_t(a))\big)\\
%&&+\sum_{i+j=k, i, j\neq 0}\big(\sum_{s+t=i}\big(P_t(a)m_s-m_sQ''_t(a)\big) \sum_{r=1}^{\eta_j}\sum_{(\alpha+\beta)_r=j}n_{\alpha_r}m_{\beta_r}\ldots n_{\alpha_1}m_{\beta_1}\big)\\
%&=&\sum_{i+j=k,  j\neq 0}\big(P_i(a)m_j-m_jQ''_i(a)\big).
%\end{eqnarray*}
Similarly we can show that $\mathfrak g_{1k}(1)=-\mathfrak g_{2k}(1)$ and similar equations hold for
$\mathfrak g_{1k}(a), \mathfrak g_{2k}(b), \mathfrak f_{2k}(b)$.
On the other hand when we set $a=0$ in \eqref{f_{3k}(mb)} we get
\small \begin{eqnarray}\label{f_{3k}}
\nonumber \mathfrak f_{3k}(mb)&=&\mathfrak f_{3k}(m)b+m\mathfrak q_{2k}(b)-\mathfrak p_{2k}(b)m+\hspace*{-.4cm}\sum_{i+j=k, i, j\neq 0}\big(\mathfrak p_{3i}(m)\mathfrak f_{2j}(b)+\mathfrak f_{3i}(m)\mathfrak q_{2j}(b)\big)\\
\nonumber &&-\hspace*{-.4cm}\sum_{i+j=k, i, j\neq 0}\big(\mathfrak p_{2j}(b)\mathfrak f_{3i}(m)+\mathfrak f_{2j}(b)\mathfrak q_{3i}(m)\big)\\
&=&\hspace*{-.4cm}\sum_{i+j=k}\big(\mathfrak f_{3i}(m)\mathfrak q_{2j}(b)-\mathfrak p_{2j}(b)\mathfrak f_{3i}(m)\big)+\hspace*{-.4cm}\sum_{i+j=k, i, j\neq 0}\big(\mathfrak p_{3i}(m)\mathfrak f_{2j}(b)-\mathfrak f_{2j}(b)\mathfrak q_{3i}(m)\big).
\end{eqnarray}
We calculated the phrase appeared in the last Sigma of \eqref{f_{3k}}. 
\small{\begin{eqnarray*}
&&\hspace*{-.4cm}\mathfrak p_{3i}(m)\mathfrak f_{2j}(b)\hspace*{-.1cm}-\hspace*{-.1cm}\mathfrak f_{2j}(b)\mathfrak q_{3i}(m)\hspace*{-.3cm}\\
&=&\quad\hspace*{-.4cm}\sum_{s+t=i}\sum_{\lambda+\mu=j}\mathfrak f_{3t}(m)\NN_s(m_{\lambda}Q_{\mu}(b)-P''_{\mu}(b)m_{\lambda})+\sum_{s+t=i}\sum_{\lambda+\mu=j}(m_{\lambda}Q_{\mu}(b)-P''_{\mu}(b)m_{\lambda})\NN_s \mathfrak f_{3t}(m)\\
&=&\mathfrak f_{3i}(m)\sum_{s+\lambda+\mu=j}\NN_s(m_{\lambda}Q_{\mu}(b)-P''_{\mu}(b)m_{\lambda})
+\sum_{s+\lambda+\mu=j}(m_{\lambda}Q_{\mu}(b)-P''_{\mu}(b)m_{\lambda})\NN_s\mathfrak f_{3i}(m)\\
&=&\mathfrak f_{3i}(m)\sum_{s+\lambda+\mu=j}\sum_{r=1}^{\nu_s}\hspace*{-.1cm}\sum_{(\alpha+\beta)_r+\gamma=s}\hspace*{-.5cm}
(n_{\alpha_1}m_{\beta_1}\ldots n_{\alpha_r}m_{\beta_r}n_{\gamma}+n_s)(m_{\lambda}Q_{\mu}(b)-P''_{\mu}(b)m_{\lambda})\\
&&+\sum_{s+\lambda+\mu=j}\sum_{r=1}^{\nu_s}\hspace*{-.1cm}
\sum_{(\alpha+\beta)_r+\gamma=s}\hspace*{-.5cm}
(m_{\lambda}Q_{\mu}(b)-P''_{\mu}(b)m_{\lambda})(n_{\alpha_1}m_{\beta_1}\ldots n_{\alpha_r}m_{\beta_r}n_{\gamma}+n_s)\mathfrak f_{3i}(m)\\
&=&\mathfrak f_{3i}(m)\sum_{r'=1}^{\eta_j}\sum_{(\alpha+\beta)_{r'}+\mu=j,\, \mu\leq j-2}n_{\alpha_{r'}}m_{\beta_{r'}}\ldots n_{\alpha_1}(m_{\beta_1}Q_{\mu}(b)-P''_{\mu}(b)m_{\beta_1})\\
&&+\sum_{r'=1}^{\eta_j}\sum_{(\alpha+\beta)_{r'}+\mu=j,\, \mu\leq j-2}(m_{\beta_1}Q_{\mu}(b)-P''_{\mu}(b)m_{\beta_1})n_{\alpha_1}\ldots m_{\beta_{r'}}n_{\alpha_{r'}}\mathfrak f_{3i}(m).
\end{eqnarray*}}
The indices $s, t$ appeared in the first equation of above relations acquire all values between $0$ to $i$ and $i$ takes all values between $1$ to $k-1$, because of all these changes is symmetric the second equation of above relations hold.\\
Note that one of the maximum length for $(n_{\alpha_1}m_{\beta_1})\ldots (n_{\alpha_r}m_{\beta_r})$ is at
$\mu=0, \lambda=1$, in this case $j=s+1$ and the length of
$(n_{\alpha_1}m_{\beta_1})\ldots (n_{\alpha_{\nu_s}}m_{\beta_{\nu_s}})(n_{\gamma}m_1)$  is
\[\nu_s+1=\nu_{j-1}+1=\left\lbrace\begin{array}{ccc}
j/2 & ; & j-1\in\mathbb{O}\\
(j-1)/2 & ; & j-1\in\mathbb{E}
\end{array}\right.=\left\lbrace\begin{array}{ccc}
j/2 & ; & j\in\mathbb{E}\\
(j-1)/2 & ; & j\in\mathbb{O}
\end{array}\right.=\eta_j.\]
It is remarkable that the length of
\[(n_{\alpha_1}m_{\beta_1})\ldots (n_{\alpha_{\nu_s}}m_{\beta_{\nu_s}})(n_{\gamma}m_{\lambda}),\qquad \sum_{i=1}^{\nu_s}(\alpha_i+\beta_i)+\gamma+\lambda=j,\]
in the case where $\lambda\neq 1$ is the same as length of
$(n_{\alpha_1}m_{\beta_1})\ldots (n_{\alpha_{\nu_s}}m_{\beta_{\nu_s}})(n_{\gamma}m_1)$, or equivalently it is $\eta_j$.
Now by replacing this relation in \eqref{f_{3k}} we get
\[\mathfrak f_{3k}(mb)=\sum_{i+j=k}\big(\mathfrak f_{3i}(m)Q_j(b)-P''_j(b)\mathfrak f_{3i}(m)\big).\]
Similarly one can check similar equations for $\mathfrak f_{3k}(am), \mathfrak g_{4k}(na), \mathfrak g_{4k}(bn)$. Again from \eqref{Liematrix} we get, 
\small \begin{eqnarray}\label{p_{3k}(mb)}
\nonumber\mathfrak p_{3k}(mb)\hspace*{-.2cm}&=&\hspace*{-.2cm}a\mathfrak p_{2k}(b)+m\mathfrak g_{2k}(b)-\mathfrak p_{2k}(b)a
+\hspace*{-.4cm}\sum_{i+j=k,\, i, j\neq 0}\hspace*{-.4cm}\big(\big(\mathfrak p_{1i}(a)+\mathfrak p_{3i}(m)\big)\mathfrak p_{2j}(b)+\big(\mathfrak f_{1i}(a)+\mathfrak f_{3i}(m)\big)\mathfrak g_{2j}(b)\big)\\
&&-\hspace*{-.4cm}\sum_{i+j=k,\, i, j\neq 0}\hspace*{-.4cm}\big(\mathfrak p_{2j}(b)\big(\mathfrak p_{1i}(a)+\mathfrak p_{3i}(m)\big)+\mathfrak f_{2j}(b)\big(\mathfrak g_{1i}(a)+\mathfrak g_{3i}(m)\big)\big).
\end{eqnarray}
If we put  $m=0,\, b=1$ in \eqref{p_{3k}(mb)},  then we have
\begin{eqnarray*}
\mathfrak p_{3k}(m)&=&-mn_k-\sum_{i+j=k,\, i, j\neq 0}\bigg(\mathfrak f_{3i}(m)n_j+m_j\sum_{s+l+t=i}\NN_l\mathfrak f_{3t}(m)\NN_s\bigg)\\
&&+\sum_{i+j=k,\, i, j\neq 0}\mathfrak p_{3i}(m)\sum_{r=1}^{\eta_j}\sum_{(\alpha+\beta)_r=j}m_{\beta_1}n_{\alpha_1}\ldots m_{\beta_r}n_{\alpha_r}\\
&&-\sum_{i+j=k,\, i, j\neq 0}\sum_{r=1}^{\eta_j}\sum_{(\alpha+\beta)_r=j}m_{\beta_1}n_{\alpha_1}\ldots m_{\beta_r}n_{\alpha_r}\mathfrak p_{3i}(m)\\
&=&-\sum_{i+j=k}\mathfrak f_{3i}(m)n_j-\sum_{i+j=k,\, i, j\neq 0}\sum_{s+l+t=i}m_j\NN_l\mathfrak f_{3t}(m)\NN_s\\
&&-\sum_{i+j=k,\, i, j\neq 0}\sum_{r=1}^{\eta_j}\sum_{(\alpha+\beta)_r=j}\sum_{s+t=i}\mathfrak f_{3t}(m)\NN_sm_{\beta_1}n_{\alpha_1}\ldots m_{\beta_r}n_{\alpha_r}\\
&&+\sum_{i+j=k,\, i, j\neq 0}\sum_{r=1}^{\eta_j}\sum_{(\alpha+\beta)_r=j}\sum_{s+t=i}m_{\beta_1}n_{\alpha_1}\ldots m_{\beta_r}n_{\alpha_r}\mathfrak f_{3t}(m)\NN_s\\
&=&-\sum_{i+j=k,\, i, j\neq 0}\sum_{r=1}^{\nu_j}\sum_{(\alpha+\beta)_r+\gamma=j}\mathfrak f_{3i}(m)\bigg(n_j+n_{\gamma}m_{\beta_1}n_{\alpha_1}\ldots m_{\beta_r}n_{\alpha_r}\bigg)\\
&&-\sum_{i+j=k,\, i, j\neq 0}\sum_{r=1}^{\eta_j}\sum_{(\alpha+\beta)_r=j}\sum_{s+t=i}m_{\beta_1}n_{\alpha_1}\ldots m_{\beta_r}n_{\alpha_r}\mathfrak f_{3t}(m)\NN_s\\
&&+\sum_{i+j=k,\, i, j\neq 0}\sum_{r=1}^{\eta_j}\sum_{(\alpha+\beta)_r=j}\sum_{s+t=i}m_{\beta_1}n_{\alpha_1}\ldots m_{\beta_r}n_{\alpha_r}\mathfrak f_{3t}(m)\NN_s\\
&=&-\sum_{i+j=k}\mathfrak f_{3i}(m)\NN_j.
\end{eqnarray*}
Also it is not difficult to check similar relations for $\mathfrak q_{3k}(m), \mathfrak p_{4k}(n), \mathfrak q_{4k}(n)$. From $(2,1)$-entry equation \eqref{Liematrix} we have
\begin{eqnarray*}
\mathfrak g_{3k}(mb)&=&-b(\mathfrak g_{1k}(a)+\mathfrak g_{3k}(m))-\mathfrak g_{2k}(b)a\\
&&+\sum_{i+j=k}\big((\mathfrak g_{1i}(a)+\mathfrak g_{3i}(m))\mathfrak p_{2j}(b)+(\mathfrak q_{1i}(a)+\mathfrak q_{3i}(m))\mathfrak g_{2j}(b)\big)\\
&&-\sum_{i+j=k}\big(\mathfrak g_{2j}(b)(\mathfrak p_{1i}(a)+\mathfrak p_{3i}(m))+\mathfrak q_{2j}(b)(\mathfrak g_{1i}(a)+\mathfrak g_{3i}(m))\big).
\end{eqnarray*}
Set $m=0, b=1$ then we have
\begin{eqnarray*}
2\mathfrak g_{3k}(m)&=&\sum_{i+j=k,\, i, j\neq 0}\big(\mathfrak g_{3i}(m)\mathfrak p_{2j}(1)-\mathfrak q_{2j}(1)\mathfrak g_{3i}(m)-\mathfrak q_{3i}(m)n_j+n_j\mathfrak p_{3i}(m)\big)\\
&=&\sum_{i+j=k,\, i, j\neq 0}(\mathfrak g_{3i}(m)p''_j(1)+q_j(1)\mathfrak g_{3i}(m))\\
&&-\sum_{s+t+j=k}(\NN_s \mathfrak f_{3t}(m)n_j+n_j\mathfrak f_{3t}(m)\NN_s)\\
&=&-\sum_{s+t+l+j=k}\sum_{r=1}^{\eta_j}\sum_{(\alpha+\beta)_r=j}\NN_s \mathfrak f_{3t}(m)\NN_l m_{\beta_1}n_{\alpha_1}\ldots m_{\beta_r}n_{\alpha_r}\\
&&-\sum_{s+t+l+j=k}\sum_{r=1}^{\eta_j}\sum_{(\alpha+\beta)_r=j}n_{\alpha_r}m_{\beta_r}\ldots n_{\alpha_1}m_{\beta_1}\NN_s \mathfrak f_{3t}(m)\NN_l\\
&&-\sum_{s+t+j=k}(\NN_s \mathfrak f_{3t}(m)n_j+n_j\mathfrak f_{3t}(m)\NN_s)\\
&=&-\sum_{s+t+j=k}\big(\NN_s \mathfrak f_{3t}(m)(n_j+\sum_{r=1}^{\nu_j}\sum_{(\alpha+\beta)_r+l=j}n_{\alpha_r}m_{\beta_r}\ldots n_{\alpha_1}m_{\beta_1}n_l)\big)\\
&&-\sum_{s+t+j=k}\big((\sum_{r=1}^{\nu_j}\sum_{(\alpha+\beta)_r+l=j}n_{\alpha_r}m_{\beta_r}\ldots n_{\alpha_1}m_{\beta_1}n_l+n_j)\mathfrak f_{3t}(m)\NN_s\big)\\
&=&-2\sum_{s+t+j=k}\NN_s\mathfrak f_{3t}(m)\NN_j.
\end{eqnarray*}
As $A$ is $2$-torsion free we get $\mathfrak g_{3k}(m)=-\sum_{s+t+j=k}\NN_s\mathfrak f_{3t}(m)\NN_j$. By similar argument and $2$-torsion freeness of $B$, it follows that
$\mathfrak f_{4k}(n)=-\sum_{s+t+j=k}\M_s\mathfrak g_{4t}(n)\M_j$.\\
From $(2,2)$-entry of equation \eqref{Liematrix} we have
\begin{eqnarray*}
\mathfrak q_{3k}(mb)&=&(\mathfrak q_{1k}(a)+\mathfrak q_{3k}(m))b-b(\mathfrak q_{1k}(a)+\mathfrak q_{3k}(m))-\mathfrak g_{2k}(m)b\\
&&+\sum_{i+j=k,\, i, j\neq 0}\big((\mathfrak g_{1i}(a)+\mathfrak g_{3i}(m))\mathfrak f_{2j}(b)+(\mathfrak q_{1i}(a)+\mathfrak q_{3i}(m))\mathfrak q_{2j}(b)\big)\\
&&-\sum_{i+j=k,\, i, j\neq 0}\big(\mathfrak g_{2j}(b)(\mathfrak f_{1i}(a)+\mathfrak f_{3i}(m))+\mathfrak q_{2j}(b)(\mathfrak q_{1i}(a)+\mathfrak q_{3i}(m))\big),
\end{eqnarray*}
Set $m = 0$ in the last equation then we get\\
$0=\mathfrak q_{1k}(a)b-b\mathfrak q_{1k}(a)+\sum_{i+j=k,\, i, j\neq 0}\big(\mathfrak g_{1i}(a)\mathfrak f_{2j}(b)+\mathfrak q_{1i}(a)\mathfrak q_{2j}(b)-\mathfrak g_{2j}(b)\mathfrak f_{1i}(a)-\mathfrak q_{2j}(b)\mathfrak q_{1i}(a)\big)$\\
From replacement $\mathfrak q_{1i}, \mathfrak q_{2j}$ in the last equation with $Q''_i+q''_i, Q_j-q_j$ respectively and assumption of induction we have
\begin{eqnarray*}
0&=&\mathfrak q_{1k}(a)b+\sum_{s+t+\mu+\lambda=k}(n_sP'_t(a)-Q''_t(a)n_s)(P''_{\mu}(b)m_{\lambda}-m_{\lambda}Q_{\mu}(b))\\
&&+\sum_{i+j=k,\, i, j\neq 0}(Q''_i(a)+q''_i(a))(Q_j(b)-q_j(b))\\
&&-b\mathfrak q_{1k}(a)-\sum_{s+t+\mu+\lambda=k}(n_sP''_{\mu}(b)-Q'_{\mu}(b)n_s)(P_t(a)m_{\lambda}-m_{\lambda}Q''_t(a))\\
&&-\sum_{i+j=k,\, i, j\neq 0}(Q_j(b)-q_j(b))(Q''_i(a)+q''_i(a))
\end{eqnarray*}
hence
\begin{eqnarray*}
0&=&\mathfrak q_{1k}(a)b-\sum_{s+t+\lambda=k}(n_sP'_t(a)-Q''_t(a)n_s)m_{\lambda}b\\
&&-\sum_{i+j=k,\, i, j\neq 0}q''_i(a)\sum_{r=1}^{\eta_j}\sum_{(\alpha+\beta)_r=j,}n_{\alpha_r}m_{\beta_r}\ldots n_{\alpha_1}m_{\beta_1}b\\
&&-b\mathfrak q_{1k}(a)+\sum_{s+t+\lambda=k}bn_s(P_t(a)m_{\lambda}-m_{\lambda}Q''_t(a))\\
&&+\sum_{i+j=k,\, i, j\neq 0}\sum_{r=1}^{\eta_j}\sum_{(\alpha+\beta)_r=j,}bn_{\alpha_r}m_{\beta_r}\ldots n_{\alpha_1}m_{\beta_1}q''_i(a)\\
&=&\mathfrak q_{1k}(a)b-\sum_{s+t+\lambda=k}(n_sP'_t(a)-Q''_t(a)n_s)m_{\lambda}b\\
&&-\sum_{i+j=k}\sum_{r'=1}^{\eta_i}\sum_{s+(\alpha+\beta)_{r'}=i}
\sum_{r=1}^{\eta_j}\sum_{(\alpha+\beta)_r=j,}(n_{\alpha_1}P'_s(a)-Q''_s(a)n_{\alpha_1})m_{\beta_1}\ldots n_{\alpha_{r'}}m_{\beta_{r'}}n_{\alpha_r}m_{\beta_r}\ldots n_{\alpha_1}m_{\beta_1}b\\
&&-b\mathfrak q_{1k}(a)+\sum_{s+t+\lambda=k}bn_s(P_t(a)m_{\lambda}-m_{\lambda}Q''_t(a))\\
&&+b\sum_{i+j=k}\sum_{r=1}^{\eta_j}\sum_{(\alpha+\beta)_r=j}\sum_{r'=1}^{\eta_i}\sum_{s+(\alpha+\beta)_{r'}=i}n_{\alpha_r}m_{\beta_r}\ldots n_{\alpha_1}m_{\beta_1}n_{\alpha_{r'}}m_{\beta_{r'}}\ldots n_{\alpha_1}(P_s(a)m_{\beta_1}-m_{\beta_1}Q''_s(a))\\
&=&\big(\mathfrak q_{1k}(a)-\sum_{r=1}^{\eta_k}\sum_{i+(\alpha+\beta)_r=k,\, i\leq k-2}(n_{\alpha_1}P'_i(a)-Q''_i(a)n_{\alpha_1})m_{\beta_1}\ldots n_{\alpha_r}m_{\beta_r}\big)b\\
&&-b\big(\mathfrak q_{1k}(a)-\sum_{r=1}^{\eta_k}\sum_{i+(\alpha+\beta)_r=k,\, i\leq k-2}n_{\alpha_r}m_{\beta_r}\ldots n_{\alpha_1}(P_i(a)m_{\beta_1}-m_{\beta_1}Q''_i(a))\big)\\
&=&[Q''_k(a),b]
\end{eqnarray*}
i.e. $Q''_k(a):=\mathfrak q_{1k}(a)-q''_k(a)\in Z(B)$ for all $a\in A$. By similar argument one can check that\\
$P''_k(b):=\mathfrak p_{2k}(b)-p''_k(b)\in Z(A)$ for all $b\in B$.
Now apply $\K_k$ on commutator
$\left[\left(\begin{array}{cc}
0 & 0\\
0 & b
\end{array}\right),\left(\begin{array}{cc}
0 & 0\\
0 & b'
\end{array}\right)\right]$ we have
\begin{eqnarray}\label{6Lie}
\left(\begin{array}{cc}
\mathfrak p_{2k}[b,b'] & *\\
* & \mathfrak q_{2k}[b,b']
\end{array}\right)&=&\sum_{i+j=k}\left[\left(\begin{array}{cc}
\mathfrak p_{2i}(b) & \mathfrak f_{2i}(b)\\
\mathfrak g_{2i}(b) & \mathfrak q_{2i}(b)
\end{array}\right),\left(\begin{array}{cc}
\mathfrak p_{2j}(b') & \mathfrak f_{2j}(b')\\
\mathfrak g_{2j}(b') & \mathfrak q_{2j}(b')
\end{array}\right)\right].
\end{eqnarray}
From $(1,1)$-entry of above equation and assumption of induction we have
\begin{eqnarray}\label{Pk2}
\nonumber \mathfrak p_{2k}[b,b']&=&\sum_{i+j=k}[\mathfrak p_{2i}(b),\mathfrak p_{2j}(b')]+\sum_{i+j=k}\big(\mathfrak f_{2i}(b)\mathfrak g_{2j}(b')-\mathfrak f_{2j}(b')\mathfrak g_{2i}(b)\big)\\
\nonumber &=&\sum_{i+j=k}[p''_i(b),p''_j(b')]+\sum_{\alpha_1+\beta_1+\xi+\zeta=k}\big(P''_{\zeta}(b)m_{\beta_1}-m_{\beta_1}Q_{\zeta}(b)\big)\big(n_{\alpha_1}P''_{\xi}(b')-Q'_{\xi}(b')n_{\alpha_1}\big)\\
\nonumber &&-\sum_{\alpha_1+\beta_1+\xi+\zeta=k}\big(P''_{\xi}(b')m_{\beta_1}-m_{\beta_1}Q_{\xi}(b')\big)\big(n_{\alpha_1}P''_{\zeta}(b)-Q'_{\zeta}(b)n_{\alpha_1}\big)\\
\nonumber &=&\sum_{i+j=k}\sum_{r'=1}^{\eta_i}\sum_{\zeta+(\alpha+\beta)_{r'}=i}\sum_{r=1}^{\eta_j}\sum_{\xi+(\alpha+\beta)_r=j}\\
\nonumber &&\big(m_{\beta_1}Q_{\zeta}(b)n_{\alpha_1}\ldots m_{\beta_{r'}}n_{\alpha_{r'}}(m_{\beta_1}Q_{\xi}(b')-P''_{\xi}(b')m_{\beta_1})n_{\alpha_1}\ldots m_{\beta_r}n_{\alpha_r}\\
\nonumber &&-P''_{\zeta}(b)m_{\beta_1}n_{\alpha_1}\ldots m_{\beta_{r'}}n_{\alpha_{r'}}(m_{\beta_1}Q_{\xi}(b')-P''_{\xi}(b')m_{\beta_1})n_{\alpha_1}\ldots m_{\beta_r}n_{\alpha_r}\\
\nonumber &&-m_{\beta_1}Q_{\xi}(b')n_{\alpha_1}\ldots m_{\beta_r}n_{\alpha_r}(m_{\beta_1}Q_{\zeta}(b)-P''_{\zeta}(b)m_{\beta_1})n_{\alpha_1}\ldots m_{\beta_{r'}}n_{\alpha_{r'}}\\
\nonumber &&+P''_{\xi}(b')m_{\beta_1}n_{\alpha_1}\ldots m_{\beta_r}n_{\alpha_r}(m_{\beta_1}Q_{\zeta}(b)-P''_{\zeta}(b)m_{\beta_1})n_{\alpha_1}\ldots m_{\beta_{r'}}n_{\alpha_{r'}}\big)\\
\nonumber &&+\sum_{\alpha_1+\beta_1+\xi+\zeta=k}m_{\beta_1}Q_{\zeta}(b)Q'_{\xi}(b')n_{\alpha_1}-\sum_{\alpha_1+\beta_1+\xi+\zeta=k}m_{\beta_1}Q_{\xi}(b')Q'_{\zeta}(b)n_{\alpha_1}\\
\nonumber &&+\sum_{\alpha_1+\beta_1+\xi+\zeta=k}m_{\beta_1}Q_{\xi}(b')n_{\alpha_1}P''_{\zeta}(b)-\sum_{\alpha_1+\beta_1+\xi+\zeta=k}P''_{\zeta}(b)m_{\beta_1}Q'_{\xi}(b')n_{\alpha_1}\\
\nonumber &&+\sum_{\alpha_1+\beta_1+\xi+\zeta=k}P''_{\xi}(b')m_{\beta_1}Q'_{\zeta}(b)n_{\alpha_1}-\sum_{\alpha_1+\beta_1+\xi+\zeta=k}m_{\beta_1}Q_{\zeta}(b)n_{\alpha_1}P''_{\xi}(b')\\
\nonumber &=&\sum_{i+j=k}\sum_{r=1}^{\eta_j}\sum_{\zeta+(\alpha+\beta)_r=j}\big(m_{\beta_1}Q_{\zeta}(b)q_i(b')n_{\alpha_1}\ldots m_{\beta_r}n_{\alpha_r}-P''_{\zeta}(b)m_{\beta_1}q_i(b')n_{\alpha_1}\ldots m_{\beta_r}n_{\alpha_r}\big)\\
\nonumber &&-\sum_{i+j=k}\sum_{r'=1}^{\eta_i}\sum_{\xi+(\alpha+\beta)_{r'}=i}m_{\beta_1}Q_{\xi}(b')q_j(b)n_{\alpha_1}\ldots m_{\beta_{r'}}n_{\alpha_{r'}}\\
\nonumber &&+\sum_{i+j=k}\sum_{r'=1}^{\eta_i}\sum_{\xi+(\alpha+\beta)_{r'}=i}P''_{\xi}(b')m_{\beta_1}q_j(b)n_{\alpha_1}\ldots m_{\beta_{r'}}n_{\alpha_{r'}}\\
\nonumber &&+\sum_{i+j=k}\sum_{\alpha_1+\beta_1+\zeta=j}m_{\beta_1}Q_{\zeta}(b)q_{2i}(b')n_{\alpha_1}-\sum_{i+j=k}\sum_{\alpha_1+\beta_1+\xi=i}m_{\beta_1}Q_{\xi}(b')q_{2j}(b)n_{\alpha_1}\\
\nonumber &&+\sum_{i+j=k}\sum_{\alpha_1+\beta_1+\zeta=j}m_{\beta_1}Q_{\zeta}(b)q'_i(b')n_{\alpha_1}-\sum_{i+j=k}\sum_{\alpha_1+\beta_1+\xi=i}m_{\beta_1}Q_{\xi}(b')q''_{2j}(b)n_{\alpha_1}\\
\nonumber &&+\sum_{\alpha_1+\beta_1+\xi+\zeta=k}m_{\beta_1}Q_{\xi}(b')n_{\alpha_1}P''_{\zeta}(b)-\sum_{\alpha_1+\beta_1+\xi+\zeta=k}P''_{\zeta}(b)m_{\beta_1}Q'_{\xi}(b')n_{\alpha_1}\\
&&+\sum_{\alpha_1+\beta_1+\xi+\zeta=k}P''_{\xi}(b')m_{\beta_1}Q'_{\zeta}(b)n_{\alpha_1}-\sum_{\alpha_1+\beta_1+\xi+\zeta=k}m_{\beta_1}Q_{\zeta}(b)n_{\alpha_1}P''_{\xi}(b').
\end{eqnarray}
By replacing $Q'_*$ with $\mathfrak q_{2*}+q'_*$ in following sentences of relation \eqref{Pk2} we have
\vspace*{-.25cm}
{\footnotesize \begin{eqnarray*}
&&\sum_{i+j=k}\sum_{r=1}^{\eta_j}\sum_{\zeta+(\alpha+\beta)_r=j}m_{\beta_1}Q_{\zeta}(b)q_i(b')n_{\alpha_1}(m_{\beta}n_{\alpha})^r\\
&&+\sum_{i+j=k}\sum_{\alpha_1+\beta_1+\zeta=j}m_{\beta_1}Q_{\zeta}(b)\mathfrak q_{2i}(b')n_{\alpha_1}
+\sum_{i+j=k}\sum_{\alpha_1+\beta_1+\zeta=j}m_{\beta_1}Q_{\zeta}(b)q'_i(b')n_{\alpha_1}\\
&=&\sum_{i+j=k}\sum_{r=2}^{\eta_j}\sum_{\zeta+(\alpha+\beta)_r=j}m_{\beta_1}Q_{\zeta}(b)q_i(b')n_{\alpha_1}(m_{\beta}n_{\alpha})^r
+\sum_{\alpha_1+\beta_1+\zeta+\xi=k}m_{\beta_1}Q_{\zeta}(b)Q_{\xi}(b')n_{\alpha_1}\\
&&+\sum_{i+j=k}\sum_{\alpha_1+\beta_1+\zeta=j}\sum_{r'=1}^{\eta_i}\sum_{(\alpha+\beta)_{r'}+i_1=i}m_{\beta_1}Q_{\zeta}(b)(Q'_{i_1}(b')n_{\alpha_1}-n_{\alpha_1}P''_{i_1}(b'))m_{\beta_1}\ldots n_{\alpha_{r'}}m_{\beta_{r'}})n_{\alpha_1}\\
&=&\sum_{i+j=k}\sum_{r=2}^{\eta_j}\sum_{\zeta+(\alpha+\beta)_r=j}m_{\beta_1}Q_{\zeta}(b)q_i(b')n_{\alpha_1}(m_{\beta}n_{\alpha})^r
+\sum_{\alpha_1+\beta_1+\zeta+\xi=k}m_{\beta_1}Q_{\zeta}(b)Q_{\xi}(b')n_{\alpha_1}\\
&&+\sum_{i_1+j_1=k}\sum_{r'=1}^{\eta_{j_1}}\sum_{\zeta+(\alpha+\beta)_{r'}=j_1}m_{\beta_1}Q_{\zeta}(b)(\mathfrak q_{2i_1}(b')+q'_{i_1}(b'))n_{\alpha_1}(m_{\beta}n_{\alpha})^{r'}\\
&&-\sum_{i_1+j_1=k}\sum_{r'=1}^{\eta_{j_1}}\sum_{\zeta+(\alpha+\beta)_{r'}=j_1}m_{\beta_1}Q_{\zeta}(b)n_{\alpha_1}P''_{i_1}(b')(m_{\beta}n_{\alpha})^{r'}\\
&=&\sum_{i+j=k}\sum_{r=4}^{\eta_j}\sum_{\zeta+(\alpha+\beta)_r=j}m_{\beta_1}Q_{\zeta}(b)q_i(b')n_{\alpha_1}(m_{\beta}n_{\alpha})^r
+\sum_{(\alpha+\beta)_2+\zeta+\xi=k}m_{\beta_1}Q_{\zeta}(b)Q_{\xi}(b')n_{\alpha_1}m_{\beta_2}n_{\alpha_2}\\
&&+\sum_{i_1+j_1=k}\sum_{r'=1}^{\eta_{j_1}}\sum_{\zeta+(\alpha+\beta)_{r'}=j_1}m_{\beta_1}Q_{\zeta}(b)q'_{i_1}(b')n_{\alpha_1}(m_{\beta}n_{\alpha})^{r'}\\
&&-\sum_{i_1+j_1=k}\sum_{r'=1}^{\eta_{j_1}}\sum_{\zeta+(\alpha+\beta)_{r'}=j_1}m_{\beta_1}Q_{\zeta}(b)n_{\alpha_1}P_{i_1}(b')(m_{\beta}n_{\alpha})^{r'}\\
&=&\\
&&\vdots\\
&=&\sum_{r'=1}^{\eta_k}\sum_{\xi+\zeta+(\alpha+\beta)_{r'}=k}m_{\beta_1}Q_{\zeta}(b)Q_{\xi}(b')n_{\alpha_1}(m_{\beta}n_{\alpha})^{r'}
-\sum_{r'=2}^{\eta_k}\sum_{\xi+\zeta+(\alpha+\beta)_{r'}=k}m_{\beta_1}Q_{\zeta}(b)n_{\alpha_1}P''_{\xi}(b')(m_{\beta}n_{\alpha})^{r'}.
\end{eqnarray*}}
By a similar way on following sentences of relation \eqref{Pk2} we have
\begin{eqnarray*}
&&-\sum_{i+j=k}\sum_{\alpha_1+\beta_1+\xi=i}m_{\beta_1}Q_{\xi}(b')\mathfrak q_{2j}(b)n_{\alpha_1}\\&&-\sum_{i+j=k}\sum_{r'=1}^{\eta_i}\sum_{\xi+(\alpha+\beta)_{r'}=i}m_{\beta_1}Q_{\xi}(b')q_j(b)n_{\alpha_1}\ldots m_{\beta_{r'}}n_{\alpha_{r'}}
-\sum_{i+j=k}\sum_{\alpha_1+\beta_1+\xi=i}m_{\beta_1}Q_{\xi}(b')q'_j(b)n_{\alpha_1}\\
&=&-\sum_{r'=1}^{\eta_k}\sum_{\xi+\zeta+(\alpha+\beta)_{r'}=k}m_{\beta_1}Q_{\xi}(b')Q_{\zeta}(b)n_{\alpha_1}\ldots m_{\beta_{r'}}n_{\alpha_{r'}}\\
&&+\sum_{r'=2}^{\eta_k}\sum_{\xi+\zeta+(\alpha+\beta)_{r'}=k}m_{\beta_1}Q_{\xi}(b')n_{\alpha_1}P''_{\zeta}(b)\ldots m_{\beta_{r'}}n_{\alpha_{r'}}.
\end{eqnarray*}
Now consider following sentences of relation \eqref{Pk2}
\begin{eqnarray*}
\sum_{i+j=k}\sum_{\alpha_1+\beta_1+\zeta=k}m_{\beta_1}Q_i(b')n_{\alpha_1}P''_{\zeta}(b)&&-\sum_{i+j=k}\sum_{r=1}^{\eta_j}\sum_{\zeta+(\alpha+\beta)_r=j}P''_{\zeta}(b)m_{\beta_1}q_i(b')n_{\alpha_1}\ldots m_{\beta_r}n_{\alpha_r}\\
&&-\sum_{i+j=k}\sum_{\alpha_1+\beta_1+\zeta=j}P''_{\zeta}(b)m_{\beta_1}Q'_i(b')n_{\alpha_1}\\
&=&-\sum_{i+j=k}\sum_{r=2}^{\eta_j}\sum_{\zeta+(\alpha+\beta)_r=j}P''_{\zeta}(b)m_{\beta_1}q_i(b')n_{\alpha_1}\ldots m_{\beta_r}n_{\alpha_r}\\
&&-\sum_{i+j=k}\sum_{\alpha_1+\beta_1+\zeta=k}P''_{\zeta}(b)m_{\beta_1}q'_i(b')n_{\alpha_1}\\
&=&\\
&&\vdots\\
&=&-\sum_{r=2}^{\eta_k}\sum_{\xi+\zeta+(\alpha+\beta)_{r}=k}P''_{\zeta}(b)m_{\beta_1}Q_{\xi}(b')n_{\alpha_1}\ldots m_{\beta_r}n_{\alpha_r}
\end{eqnarray*}
Similarly consider following sentences of relation \eqref{Pk2} we have
\begin{eqnarray*}
-\sum_{\alpha_1+\beta_1+\xi+\zeta=k}m_{\beta_1}Q_{\zeta}(b)n_{\alpha_1}P''_{\xi}(b')&&+\sum_{i+j=k}\sum_{r'=1}^{\eta_i}\sum_{\xi+(\alpha+\beta)_{r'}=i}P''_{\xi}(b')m_{\beta_1}q_j(b)n_{\alpha_1}\ldots m_{\beta_{r'}}n_{\alpha_{r'}}\\
&&+\sum_{\alpha_1+\beta_1+\xi+\zeta=k}P''_{\xi}(b')m_{\beta_1}Q'_{\zeta}(b)n_{\alpha_1}\\
&=&\sum_{r'=2}^{\eta_k}\sum_{\xi+\zeta+(\alpha+\beta)_{r'}=k}P''_{\xi}(b')m_{\beta_1}Q_{\zeta}(b)n_{\alpha_1}\ldots m_{\beta_{r'}}n_{\alpha_{r'}}
\end{eqnarray*}
Gather above relations and replace in relation \eqref{Pk2}, from this and assumption of induction we have
\begin{eqnarray*}
\mathfrak p_{2k}[b,b']&=&\sum_{r=1}^{\eta_k}\sum_{\zeta+\xi+(\alpha+\beta)_r=k,\, \zeta+\xi\leq k-2}m_{\beta_1}[Q_{\zeta}(b),Q_{\xi}(b')]n_{\alpha_1}\ldots m_{\beta_r}n_{\alpha_r}\\
&=&\sum_{r=1}^{\eta_k}\sum_{i+(\alpha+\beta)_r=k,\, i\leq k-2}m_{\beta_1}Q_i[b,b']n_{\alpha_1}\ldots m_{\beta_r}n_{\alpha_r}\\
&=&\sum_{r=1}^{\eta_k}\sum_{i+(\alpha+\beta)_r=k,\, i\leq k-2}(m_{\beta_1}Q_i[b,b']-P''_i[b,b']m_{\beta_1})n_{\alpha_1}\ldots m_{\beta_r}n_{\alpha_r}.
\end{eqnarray*}
i.e. $P''_k[b,b']:=\mathfrak p_{2k}[b,b']-p''_k[b,b']=0$ for all $b,b'\in B$. Similarly, $Q''_k[a,a']:=\mathfrak q_{1k}[a,a']-q''_k[a,a']=0$ for all $a, a'\in A$.
From $(2,2)$-entry of equation \eqref{6Lie} we have
\[\mathfrak q_{2k}[b,b']=\sum_{i+j=k}[\mathfrak q_{2i}(b),\mathfrak q_{2j}(b')]+\sum_{i+j=k}\big(\mathfrak g_{2i}(b)\mathfrak f_{2j}(b')-\mathfrak g_{2j}(b')\mathfrak f_{2i}(b)\big).\]
Replace $\mathfrak q_{2k}, \mathfrak q_{2i}, \mathfrak q_{2j}$ with $Q_k-q_k, Q_i-q_i, Q_j-q_j$ respectively, then we have
\begin{eqnarray*}
Q_k[b,b']-q_k[b,b']&=&\sum_{i+j=k}[Q_i(b)-q_i(b),Q_j(b')-q_j(b')]\\
&&+\sum_{\alpha_1+\beta_1+\xi+\zeta=k}(n_{\alpha_1}P''_{\zeta}(b)-Q'_{\zeta}(b)n_{\alpha_1})(P''_{\xi}(b')m_{\beta_1}-m_{\beta_1}Q_{\xi}(b'))\\
&&-\sum_{\alpha_1+\beta_1+\xi+\zeta=k}(n_{\alpha_1}P''_{\xi}(b')-Q'_{\xi}(b')n_{\alpha_1})(P''_{\zeta}(b)m_{\beta_1}-m_{\beta_1}Q_{\zeta}(b)).
\end{eqnarray*}
To show that $Q_k$ is a Lie higher derivation by assumption of induction, it is enough to check following equation
\begin{eqnarray*}
q_k[b,b']&=&\sum_{i+j=k}\big([Q_i(b),q_j(b')]+[q_i(b),Q_j(b')]-[q_i(b),q_j(b')]\big)\\
&&+\sum_{\alpha_1+\beta_1+\xi+\zeta=k}\big(n_{\alpha_1}P''_{\zeta}(b)m_{\beta_1}Q_{\xi}(b')+Q'_{\zeta}(b)n_{\alpha_1}P''_{\xi}(b')m_{\beta_1}-Q'_{\zeta}(b)n_{\alpha_1}m_{\beta_1}Q_{\xi}(b')\big)\\
&&-\sum_{\alpha_1+\beta_1+\xi+\zeta=k}\big(n_{\alpha_1}P''_{\xi}(b')m_{\beta_1}Q_{\zeta}(b)+Q'_{\xi}(b')n_{\alpha_1}P''_{\zeta}(b)m_{\beta_1}-Q'_{\xi}(b')n_{\alpha_1}m_{\beta_1}Q_{\zeta}(b)\big).
\end{eqnarray*}
From definition of $q_k$ and equations $Q_k=\mathfrak q_{2k}+q_k$, $Q'_k=\mathfrak q_{2k}+q'_k$ we have
{\footnotesize \begin{eqnarray*}
\sum_{r=1}^{\eta_k}\sum_{i+(\alpha+\beta)_r=k} n_{\alpha_r}m_{\beta_r}\ldots n_{\alpha_1}m_{\beta_1}Q_i[b,b']&=&
\sum_{i+j=k}Q_i(b)\sum_{r=1}^{\eta_j}\sum_{\xi+(\alpha+\beta)_r=j}n_{\alpha_r}m_{\beta_r}\ldots n_{\alpha_1}m_{\beta_1}Q_{\xi}(b')\\
&&-\sum_{i+j=k}Q_i(b)\sum_{r=1}^{\eta_j}\sum_{\xi+(\alpha+\beta)_r=j}n_{\alpha_r}m_{\beta_r}\ldots n_{\alpha_1}P''_{\xi}(b')m_{\beta_1}\\
&&-\sum_{i+j=k}\sum_{r=1}^{\eta_j}\sum_{\xi+(\alpha+\beta)_r=j} n_{\alpha_r}m_{\beta_r}\ldots n_{\alpha_1}m_{\beta_1}Q_{\xi}(b')Q_i(b)\\
&&+\sum_{i+j=k}\sum_{r=1}^{\eta_j}\sum_{\xi+(\alpha+\beta)_r=j} n_{\alpha_r}m_{\beta_r}\ldots n_{\alpha_1}P''_{\xi}(b')m_{\beta_1}Q_i(b)\\
&&+\sum_{i+j=k}\sum_{r'=1}^{\eta_i}\sum_{\zeta+(\alpha+\beta)_{r'}=i} n_{\alpha_{r'}}m_{\beta_{r'}}\ldots n_{\alpha_1}m_{\beta_1}Q_{\zeta}(b)Q_j(b')\\
&&-\sum_{i+j=k}\sum_{r'=1}^{\eta_i}\sum_{\zeta+(\alpha+\beta)_{r'}=i}n_{\alpha_{r'}}m_{\beta_{r'}}\ldots n_{\alpha_1}P''_{\zeta}(b)m_{\beta_1}Q_j(b')\\
&&-\sum_{i+j=k}Q_j(b')\sum_{r'=1}^{\eta_i}\sum_{\zeta+(\alpha+\beta)_{r'}=i}n_{\alpha_{r'}}m_{\beta_{r'}}\ldots n_{\alpha_1}m_{\beta_1}Q_{\zeta}(b)\\
&&+\sum_{i+j=k}Q_j(b')\sum_{r'=1}^{\eta_i}\sum_{\zeta+(\alpha+\beta)_{r'}=i} n_{\alpha_{r'}}m_{\beta_{r'}}\ldots n_{\alpha_1}P''_{\zeta}(b)m_{\beta_1}\\
&&-\sum_{i+j=k}Q_i(b)\sum_{r=1}^{\eta_j}\sum_{\xi+(\alpha+\beta)_r=j}n_{\alpha_r}m_{\beta_r}\ldots n_{\alpha_1}m_{\beta_1}Q_{\xi}(b')\\
&&+\sum_{i+j=k}Q_i(b)\sum_{r=1}^{\eta_j}\sum_{\xi+(\alpha+\beta)_r=j}n_{\alpha_r}m_{\beta_r}\ldots n_{\alpha_1}P''_{\xi}(b')m_{\beta_1}\\
&&+\sum_{i+j=k}\mathfrak q_{2i}(b)\sum_{r=1}^{\eta_j}\sum_{\xi+(\alpha+\beta)_r=j}n_{\alpha_r}m_{\beta_r}\ldots n_{\alpha_1}m_{\beta_1}Q_{\xi}(b')\\
&&-\sum_{i+j=k}\mathfrak q_{2i}(b)\sum_{r=1}^{\eta_j}\sum_{\xi+(\alpha+\beta)_r=j}n_{\alpha_r}m_{\beta_r}\ldots n_{\alpha_1}P''_{\xi}(b')m_{\beta_1}\\
&&+\sum_{i+j=k}Q_j(b')\sum_{r'=1}^{\eta_i}\sum_{\zeta+(\alpha+\beta)_{r'}=i}n_{\alpha_{r'}}m_{\beta_{r'}}\ldots n_{\alpha_1}m_{\beta_1}Q_{\zeta}(b)\\
&&-\sum_{i+j=k}Q_j(b')\sum_{r'=1}^{\eta_i}\sum_{\zeta+(\alpha+\beta)_{r'}=i}n_{\alpha_{r'}}m_{\beta_{r'}}\ldots n_{\alpha_1}P''_{\zeta}(b)m_{\beta_1}\\
&&-\sum_{i+j=k}\mathfrak q_{2j}(b')\sum_{r'=1}^{\eta_i}\sum_{\zeta+(\alpha+\beta)_{r'}=i}n_{\alpha_{r'}}m_{\beta_{r'}}\ldots n_{\alpha_1}m_{\beta_1}Q_{\zeta}(b)\\
&&+\sum_{i+j=k}\mathfrak q_{2j}(b')\sum_{r'=1}^{\eta_i}\sum_{\zeta+(\alpha+\beta)_{r'}=i}n_{\alpha_{r'}}m_{\beta_{r'}}\ldots n_{\alpha_1}P''_{\zeta}(b)m_{\beta_1}\\
&&+\sum_{i+j=k}\sum_{\zeta+\alpha_1=i}\sum_{\xi+\beta_1=j}n_{\alpha_1}P''_{\zeta}(b)m_{\beta_1}Q_{\xi}(b')\\
&&+\sum_{i+j=k}\sum_{\zeta+\alpha_1=i}\sum_{\xi+\beta_1=j}Q'_{\zeta}(b)n_{\alpha_1}P''_{\xi}(b')m_{\beta_1}\\
%\end{eqnarray*}
%\begin{eqnarray*}
\hspace*{4cm}&&-\sum_{i+j=k}\sum_{\zeta+\alpha_1=i}\sum_{\xi+\beta_1=j}Q'_{\zeta}(b)n_{\alpha_1}m_{\beta_1}Q_{\xi}(b')\\
&&-\sum_{i+j=k}\sum_{\zeta+\alpha_1=i}\sum_{\xi+\beta_1=j}n_{\alpha_1}P''_{\xi}(b')m_{\beta_1}Q_{\zeta}(b)\\
&&-\sum_{i+j=k}\sum_{\zeta+\beta_1=i}\sum_{\xi+\alpha_1=j}Q'_{\xi}(b')n_{\alpha_1}P''_{\zeta}(b)m_{\beta_1}\\
&&+\sum_{i+j=k}\sum_{\zeta+\beta_1=i}\sum_{\xi+\alpha_1=j}Q'_{\xi}(b')n_{\alpha_1}m_{\beta_1}Q_{\zeta}(b).
\end{eqnarray*}} 
By omitting similar sentences we have
\begin{eqnarray*}
0&=&\sum_{i+j=k}\sum_{r=2}^{\eta_j}\sum_{\xi+(\alpha+\beta)_r=j} n_{\alpha_r}m_{\beta_r}\ldots n_{\alpha_1}P''_{\xi}(b')m_{\beta_1}Q_i(b)\\
&&-\sum_{i+j=k}\sum_{r'=2}^{\eta_i}\sum_{\zeta+(\alpha+\beta)_{r'}=i}n_{\alpha_{r'}}m_{\beta_{r'}}\ldots n_{\alpha_1}P''_{\zeta}(b)m_{\beta_1}Q_j(b')\\
&&+\sum_{i+j=k}\mathfrak q_{2i}(b)\sum_{r=2}^{\eta_j}\sum_{\xi+(\alpha+\beta)_r=j}n_{\alpha_r}m_{\beta_r}\ldots n_{\alpha_1}m_{\beta_1}Q_{\xi}(b')\\
&&-\sum_{i+j=k}\mathfrak q_{2i}(b)\sum_{r=2}^{\eta_j}\sum_{\xi+(\alpha+\beta)_r=j}n_{\alpha_r}m_{\beta_r}\ldots n_{\alpha_1}P''_{\xi}(b')m_{\beta_1}\\
&&-\sum_{i+j=k}\mathfrak q_{2j}(b')\sum_{r'=2}^{\eta_i}\sum_{\zeta+(\alpha+\beta)_{r'}=i}n_{\alpha_{r'}}m_{\beta_{r'}}\ldots n_{\alpha_1}m_{\beta_1}Q_{\zeta}(b)\\
&&+\sum_{i+j=k}\mathfrak q_{2j}(b')\sum_{r'=2}^{\eta_i}\sum_{\zeta+(\alpha+\beta)_{r'}=i}n_{\alpha_{r'}}m_{\beta_{r'}}\ldots n_{\alpha_1}P''_{\zeta}(b)m_{\beta_1}\\
&&+\sum_{i+j=k}\sum_{\zeta+\alpha_1=i}\sum_{\xi+\beta_1=j}q'_{\zeta}(b)n_{\alpha_1}P''_{\xi}(b')m_{\beta_1}\\
&&-\sum_{i+j=k}\sum_{\zeta+\alpha_1=i}\sum_{\xi+\beta_1=j}q'_{\zeta}(b)n_{\alpha_1}m_{\beta_1}Q_{\xi}(b')\\
&&-\sum_{i+j=k}\sum_{\zeta+\beta_1=i}\sum_{\xi+\alpha_1=j}q'_{\xi}(b')n_{\alpha_1}P''_{\zeta}(b)m_{\beta_1}\\
&&+\sum_{i+j=k}\sum_{\zeta+\beta_1=i}\sum_{\xi+\alpha_1=j}q'_{\xi}(b')n_{\alpha_1}m_{\beta_1}Q_{\zeta}(b)\\
&=&\\
&&\vdots\\
&=&\sum_{(\alpha+\beta)_{\eta_k-1}=k-1}n_{\alpha_{\eta_k-1}}m_{\beta_{\eta_k-1}}\ldots n_{\alpha_1}P''_1(b')m_{\beta_1}b\\
&&-\sum_{(\alpha+\beta)_{\eta_k-1}=k-1}n_{\alpha_{\eta_k-1}}m_{\beta_{\eta_k-1}}\ldots n_{\alpha_1}P''_1(b)m_{\beta_1}b'\\
&&+\sum_{(\alpha+\beta)_{\eta_k}=k}bn_{\alpha_{\eta_k}}m_{\beta_{\eta_k}}\ldots n_{\alpha_1}m_{\beta_1}b'-\sum_{(\alpha+\beta)_{\eta_k}=k}b'n_{\alpha_{\eta_k}}m_{\beta_{\eta_k}}\ldots n_{\alpha_1}m_{\beta_1}b\\
&&-\sum_{(\alpha+\beta)_{\eta_k-1}=k-1}n_{\alpha_1}P''_1(b')m_{\beta_1}\ldots n_{\alpha_{\eta_k-1}}m_{\beta_{\eta_k-1}} b\\
&&+\sum_{(\alpha+\beta)_{\eta_k-1}=k-1}n_{\alpha_1}P''_1(b)m_{\beta_1}\ldots n_{\alpha_{\eta_k-1}}m_{\beta_{\eta_k-1}}b'\\
&&-\sum_{(\alpha+\beta)_{\eta_k}=k}bn_{\alpha_1}m_{\beta_1}\ldots n_{\alpha_{\eta_k}}m_{\beta_{\eta_k}}b'+\sum_{(\alpha+\beta)_{\eta_k}=k}b'n_{\alpha_1}m_{\beta_1}\ldots n_{\alpha_{\eta_k}}m_{\beta_{\eta_k}}b.
\end{eqnarray*}
Hence $\{Q_k\}_{k\in\mathbb{N}_0}$ is a Lie higher derivation on $B$. By similar techniques one can show that $\{P_k\}_{k\in\mathbb{N}_0}, \{P'_k\}_{k\in\mathbb{N}_0}$ are Lie higher derivations on $A$ and also $\{Q'_k\}_{k\in\mathbb{N}_0}$ is a Lie higher derivation on $B$. \\
Now cosider commutatore $\left[\left(\begin{array}{cc}
0 & m\\
0 & 0
\end{array}\right),\left(\begin{array}{cc}
0 & 0\\
n & 0
\end{array}\right)\right]$, apply $\K_k$ on it then we have
\begin{eqnarray*}
\left(\begin{array}{cc}
\mathfrak p_{1k}(mn)-\mathfrak p_{2k}(nm) & *\\
* & \mathfrak q_{1k}(mn)-\mathfrak q_{2k}(nm)
\end{array}\right)&=&\sum_{i+j=k}\left[\K_i\left(\begin{array}{cc}
0 & m\\
0 & 0
\end{array}\right),\K_j\left(\begin{array}{cc}
0 & 0\\
n & 0
\end{array}\right)\right]\\
&=&\sum_{i+j=k}\left[\left(\begin{array}{cc}
\mathfrak p_{3i}(m) & \mathfrak f_{3i}(m)\\
\mathfrak g_{3i}(m) & \mathfrak q_{3i}(m)
\end{array}\right),\left(\begin{array}{cc}
\mathfrak p_{4j}(n) & \mathfrak f_{4j}(n)\\
\mathfrak g_{4j}(n) & \mathfrak q_{4j}(n)
\end{array}\right)\right].
\end{eqnarray*}
Hence \[\mathfrak p_{1k}(mn)-\mathfrak p_{2k}(nm)=\sum_{i+j=k}\big(\mathfrak p_{3i}(m)\mathfrak p_{4j}(n) +\mathfrak f_{3i}(m)\mathfrak g_{4j}(n)-\mathfrak p_{4j}(n)\mathfrak p_{3i}(m)-\mathfrak f_{4j}(n)\mathfrak g_{3i}(m)\big),\]
and \[ \mathfrak q_{1k}(mn)-\mathfrak q_{2k}(nm)=\sum_{i+j=k}\big(\mathfrak g_{3i}(m)\mathfrak f_{4j}(n)+\mathfrak q_{3i}(m)\mathfrak q_{4j}(n)-\mathfrak g_{4j}(n)\mathfrak f_{3i}(m)-\mathfrak q_{4j}(n)\mathfrak q_{3i}(m)\big).\]
\end{proof}
%%%%%%%%%%%%%%%%%%%%%%%%%%%%%%%%%%%%%%%%%%%%%
\subsection*{\color{SEC}Proof of Theorem \ref{HD}}
\begin{proof}
From the fact that ``every higher derivation is a Lie higher derivation" and Proposition \ref{LHD} follow that items $(3), (8), (9)$ and $(10)$ hold automatically.
For other items, we proceed the proof by induction on $k$. The case $k=1$ follows  from \cite{DW}. Suppose that the conclusion is true for any integer less than $k$, and $\K_k$ has the presentation
\[\D_k\left(\begin{array}{cc}
a & m\\
n & b
\end{array}\right)=\left(\begin{array}{cc}
\mathtt p_{1k}(a)+\mathtt p_{2k}(b)+\mathtt p_{3k}(m)+\mathtt p_{4k}(n) & \mathtt f_{1k}(a)+\mathtt f_{2k}(b)+\mathtt f_{3k}(m)+\mathtt f_{4k}(n)\\
\mathtt g_{1k}(a)+\mathtt g_{2k}(b)+\mathtt g_{3k}(m)+\mathtt g_{4k}(n) & \mathtt q_{1k}(a)+\mathtt q_{2k}(b)+\mathtt q_{3k}(m)+\mathtt q_{4k}(n)
\end{array}\right),\]
in which the maps appeared in the entries are linear.\\
Apply $\D_k$ on equation
$\left(\begin{array}{cc}
a & m\\
0 & 0
\end{array}\right)\left(\begin{array}{cc}
0 & 0\\
0 & b
\end{array}\right)=\left(\begin{array}{cc}
0 & mb\\
0 & 0
\end{array}\right)$ then we have
\begin{eqnarray*}
\nonumber\left(\begin{array}{cc}
\mathtt p_{3k}(mb) & \mathtt f_{3k}(mb)\\
\mathtt g_{3k}(mb) & \mathtt q_{3k}(mb)
\end{array}\right)&=&\left(\begin{array}{cc}
\mathtt p_{1k}(a)+\mathtt p_{3k}(m) & \mathtt f_{1k}(a)+ \mathtt f_{3k}(m)\\
\mathtt g_{1k}(a)+\mathtt g_{3k}(m) & \mathtt q_{1k}(a)+\mathtt q_{3k}(m)
\end{array}\right)\left(\begin{array}{cc}
0 & 0\\
0 & b
\end{array}\right)\\
\nonumber &&+\left(\begin{array}{cc}
a & m\\
0 & 0
\end{array}\right)\left(\begin{array}{cc}
\mathtt p_{2k}(b) & \mathtt f_{2k}(b)\\
\mathtt g_{2k}(b) & \mathtt q_{2k}(b)
\end{array}\right)\\
&&+\hspace*{-.5cm}\sum_{i+j=k, i, j\neq 0}\left(\begin{array}{cc}
\mathtt p_{1i}(a)+\mathtt p_{3i}(m) & \mathtt f_{1i}(a)+ \mathtt f_{3i}(m)\\
\mathtt g_{1i}(a)+\mathtt g_{3i}(m) & \mathtt q_{1i}(a)+\mathtt q_{3i}(m)
\end{array}\right)\left(\begin{array}{cc}
\mathtt p_{2j}(b) & \mathtt f_{2j}(b)\\
\mathtt g_{2j}(b) & \mathtt q_{2j}(b)
\end{array}\right).
\end{eqnarray*}
From $(1,2)$-entry of above equation we have
\begin{eqnarray}\label{f_{3k}HD}
\nonumber \mathtt f_{3k}(mb)&=&\mathtt f_{1k}(a)b+\mathtt f_{3k}(m)b+a\mathtt f_{2k}(b)+m\mathtt q_{2k}(b)\\
&&+\sum_{i+j=k, i, j\neq 0}\big((\mathtt p_{1i}(a)+\mathtt p_{3i}(m))\mathtt f_{2j}(b)+(\mathtt f_{1i}(a)+\mathtt f_{3i}(m))\mathtt q_{2j}(b)\big).
\end{eqnarray}
Set $m=0,\, b=1$ in equation \eqref{f_{3k}HD} then we have
\begin{eqnarray*}
\mathtt f_{1k}(a)&=&am_k+\sum_{i+j=k, i, j\neq 0}\big(\mathtt p_{1i}(a)m_j-\mathtt f_{1i}(a)\mathtt q_{2j}(1)\big)\\
&=&\sum_{i+j=k,  j\neq 0}\mathtt p_{1i}(a)m_j+\sum_{i+j=k, i, j\neq 0}\mathtt f_{1i}(a)q_j(1)\\
&=&\sum_{i+j=k,  j\neq 0}\mathtt p_{1i}(a)m_j+\sum_{i+j=k, i, j\neq 0}\mathtt f_{1i}(a)\sum_{r=1}^{\eta_j}\sum_{(\alpha+\beta)_r=j}n_{\alpha_r}m_{\beta_r}\ldots n_{\alpha_1}m_{\beta_1}\\
&=&\sum_{i+j=k,  j\neq 0}\mathtt p_{1i}(a)m_j+\sum_{i+j=k, i, j\neq 0}\sum_{r=1}^{\eta_j}\sum_{(\alpha+\beta)_r=j}\sum_{t+s=i}\mathsf P_t(a)m_sn_{\alpha_r}m_{\beta_r}\ldots n_{\alpha_1}m_{\beta_1}.\\
&=&\sum_{i+j=k,  j\neq 0}\mathtt p_{1i}(a)m_j+\sum_{i+j=k, i, j\neq 0}\sum_{r=1}^{\eta_i}\sum_{t+(\alpha+\beta)_r=i}\mathsf P_t(a)m_{\beta_1}n_{\alpha_1}\ldots m_{\beta_r} n_{\alpha_r}m_j.\\
&=&\sum_{i+j=k}\mathsf P_i(a)m_j.
\end{eqnarray*}
Similarly one can check that similar equations for $\mathtt g_{1k}(a), \mathtt g_{2k}(b), \mathtt f_{2k}(b)$.
Now, set $m=0$ in \eqref{f_{3k}HD} hence
\begin{eqnarray}\label{f_3(m)H}
\nonumber \mathtt f_{3k}(mb)&=&\mathtt f_{3k}(m)b+m\mathtt q_{2k}(b)\\
\nonumber &&+\sum_{i+j=k,\, i, j\neq 0}\big(\mathtt p_{3i}(m)\mathtt f_{2j}(b)+\mathtt f_{3i}(m)\mathtt q_{2j}(b)\big)\\
&=&\sum_{i+j=k}\mathtt f_{3i}(m)\mathtt q_{2j}(b)+\sum_{i+j=k,\, i, j\neq 0}\mathtt p_{3i}(m)\mathtt f_{2j}(b).
\end{eqnarray}
As
\begin{eqnarray*}
\mathtt p_{3i}(m)\mathtt f_{2j}(b)&=&\sum_{s+t=i}\sum_{\lambda+\mu=j}\mathtt f_{3t}(m)\NN_sm_{\lambda}\mathsf Q_{\mu}(b)\\
&=&\sum_{s+t=i}\sum_{\lambda+\mu=j}\sum_{r=1}^{\nu_s}\sum_{(\alpha+\beta)_r+\gamma=s}
\mathtt f_{3t}(m)(n_{\alpha_1}m_{\beta_1}\ldots n_{\alpha_r}m_{\beta_r}n_{\gamma}+n_s)m_{\lambda}\mathsf Q_{\mu}(b)\\
&=&\mathtt f_{3i}(m)\sum_{r=1}^{\eta_j}\sum_{(\alpha+\beta)_r+\mu =j, \mu \leq k-2}n_{\alpha_1}m_{\beta_1}\ldots n_{\alpha_r}m_{\beta_r}\mathsf Q_{\mu}(b)\\
&=&\mathtt f_{3i}(m)\mathsf q_j(b),
\end{eqnarray*}
we can replace this relation in \eqref{f_3(m)H} and get
\[\mathtt f_{3k}(mb)=\sum_{i+j=k}\mathtt f_{3i}(m)\mathsf Q_j(b).\]
Similarly one can check similar equations for $\mathtt f_{3k}(am), \mathtt g_{4k}(na), \mathtt g_{4k}(bn)$.
Now apply $\D_k$ on equation
$\left(\begin{array}{cc}
0 & 0\\
0 & b
\end{array}\right)\left(\begin{array}{cc}
0 & 0\\
0 & b'
\end{array}\right)=\left(\begin{array}{cc}
0 & 0\\
0 & bb'
\end{array}\right)$, then
\begin{eqnarray}\label{7HD}
\left(\begin{array}{cc}
\mathtt p_{2k}(bb')& *\\
* & \mathtt q_{2k}(bb')
\end{array}\right)&=&\sum_{i+j=k}\left(\begin{array}{cc}
\mathtt p_{2i}(b) & \mathtt f_{2i}(b)\\
\mathtt g_{2i}(b) & \mathtt q_{2i}(b)
\end{array}\right)\left(\begin{array}{cc}
\mathtt p_{2j}(b') & \mathtt f_{2j}(b')\\
\mathtt g_{2j}(b') & \mathtt q_{2j}(b')
\end{array}\right).
\end{eqnarray}
From $(1,1)$-entry of above equation and assumption of induction we have
\small \begin{eqnarray*}
\mathtt p_{2k}(bb')&=&\sum_{i+j=k}\big(\mathtt p_{2i}(b)\mathtt p_{2j}(b')+\mathtt f_{2i}(b)\mathtt g_{2j}(b')\big)\\
&=&\sum_{i+j=k}\sum_{r=1}^{\eta_i}\sum_{\zeta+(\alpha+\beta)_r=i}\sum_{r'=1}^{\eta_j}\sum_{\xi+(\alpha+\beta)_{r'}=j}
m_{\beta_1}\mathsf Q_{\zeta}(b)n_{\alpha_1}\ldots m_{\beta_r}n_{\alpha_r}m_{\beta_{r'}}n_{\alpha_{r'}}\ldots m_{\beta_1}\mathsf Q'_{\xi}(b')n_{\alpha_1}\\
&&+\sum_{i+j=k}\sum_{\zeta+\beta_1=i}\sum_{\xi+\alpha_1=j}m_{\beta_1}\mathsf Q_{\zeta}(b)\mathsf Q'_{\xi}(b')n_{\alpha_1}.
\end{eqnarray*}
Set $b'=1$ it follows that
\begin{eqnarray*}
\mathtt p_{2k}(b)&=&\sum_{i+j=k}\sum_{r=1}^{\eta_i}\sum_{\zeta+(\alpha+\beta)_r=i}\sum_{r'=1}^{\eta_j}\sum_{(\alpha+\beta)_{r'}=j}
m_{\beta_1}\mathsf Q_{\zeta}(b)n_{\alpha_1}\ldots m_{\beta_r}n_{\alpha_r}m_{\beta_1}n_{\alpha_1}\ldots m_{\beta_{r'}}n_{\alpha_{r'}}\\
&&+\sum_{i+j=k}\sum_{\zeta+\beta_1=i}m_{\beta_1}\mathsf Q_{\zeta}(b)n_{\alpha_j}\\
&=&\sum_{r=2}^{\eta_k}\sum_{\zeta+(\alpha+\beta)_r=k}m_{\beta_1}\mathsf Q_{\zeta}(b)n_{\alpha_1}\ldots m_{\beta_r}n_{\alpha_r}+\sum_{\zeta+\alpha_1+\beta_1=k}m_{\beta_1}\mathsf Q_{\zeta}(b)n_{\alpha_1}\\
&=&\sum_{r=1}^{\eta_k}\sum_{\zeta+(\alpha+\beta)_r=k}m_{\beta_1}\mathsf Q_{\zeta}(b)n_{\alpha_1}\ldots m_{\beta_r}n_{\alpha_r}, \qquad(b\in B).\\
\end{eqnarray*}
Note that if we set $b=1$ then
\[\mathtt p_{2k}(b')=\sum_{r=1}^{\eta_k}\sum_{\zeta+(\alpha+\beta)_r=k}m_{\beta_r}n_{\alpha_r}\ldots m_{\beta_1}\mathsf Q'_{\zeta}(b')n_{\alpha_1}, \qquad(b'\in B).\]
A similar argument reveals that
\begin{eqnarray*}
\mathtt q_{1k}(a)&=&\sum_{r=1}^{\eta_k}\sum_{i+(\alpha+\beta)_r=k,\, i\leq k-2}n_{\alpha_r}m_{\beta_r}\ldots n_{\alpha_1}\mathsf P_i(a)m_{\beta_1}\\
&=&\sum_{r=1}^{\eta_k}\sum_{i+(\alpha+\beta)_r=k,\, i\leq k-2}n_{\alpha_1}\mathsf P'_i(a)m_{\beta_1}\ldots n_{\alpha_r}m_{\beta_r},\qquad (a\in A)
\end{eqnarray*}
From $(2,2)$-entry of equation \eqref{7HD} we have
\[\mathtt q_{2k}(bb')=\sum_{i+j=k}\big(\mathtt g_{2i}(b)\mathtt f_{2j}(b')+\mathtt q_{2i}(b)\mathtt q_{2j}(b')\big).\]
By using assumption of induction and replacement $\mathtt q_{2k}, \mathtt q_{2i}, \mathtt q_{2j}$ with $\mathsf Q_k-\mathsf q_k, \mathsf Q_i-\mathsf q_i, \mathsf Q_j-\mathsf q_j$ respectively, we have
\begin{eqnarray*}
\mathsf Q_k(bb')-\mathsf q_k(bb')&=&\sum_{i+j=k}(\mathsf Q_i(b)-\mathsf q_i(b))(\mathsf Q_j(b')-\mathsf q_j(b'))\\
&&+\sum_{i+j=k,\, i, j\neq 0}\sum_{\xi+\alpha_1=i}\sum_{\zeta+\beta_1=j}\mathsf Q'_{\xi}(b)n_{\alpha_1}m_{\beta_1}\mathsf Q_{\zeta}(b').
\end{eqnarray*}
In sequal we show that
\begin{eqnarray*}
\mathsf q_k(bb')&=&\sum_{i+j=k}\big(\mathsf Q_i(b)\mathsf q_j(b')+\mathsf q_i(b)\mathsf Q_j(b')-\mathsf q_i(b)\mathsf q_j(b')\big)\\
&&-\sum_{i+j=k,\, i, j\neq 0}\sum_{\xi+\alpha_1=i}\sum_{\zeta+\beta_1=j}\mathsf Q'_{\xi}(b)n_{\alpha_1}m_{\beta_1}\mathsf Q_{\zeta}(b').
\end{eqnarray*}
As
\begin{eqnarray*}
\sum_{r=1}^{\eta_k}\sum n_{\alpha_r}m_{\beta_r}\ldots n_{\alpha_1}m_{\beta_1}\mathsf Q_i(bb')&=&
\sum_{i+j=k}\mathsf Q_i(b)\sum_{r=1}^{\eta_j}\sum_{\zeta+(\alpha+\beta)_r=j} n_{\alpha_r}m_{\beta_r}\ldots n_{\alpha_1}m_{\beta_1}\mathsf Q_{\zeta}(b')\\
&&+\sum_{i+j=k}\sum_{r'=1}^{\eta_i}\sum_{\xi+(\alpha+\beta)_{r'}=i} n_{\alpha_{r'}}m_{\beta_{r'}}\ldots n_{\alpha_1}m_{\beta_1}\mathsf Q_{\xi}(b)\mathsf Q_j(b')\\
&&-\sum_{i+j=k}\mathsf Q_i(b)\sum_{r=1}^{\eta_j}\sum_{\zeta+(\alpha+\beta)_r=j}n_{\alpha_r}m_{\beta_r}\ldots n_{\alpha_1}m_{\beta_1}\mathsf Q_{\zeta}(b')\\
&&+\sum_{i+j=k}\mathtt q_{2i}(b)\sum_{r=1}^{\eta_j}\sum_{\zeta+(\alpha+\beta)_r=j}n_{\alpha_r}m_{\beta_r}\ldots n_{\alpha_1}m_{\beta_1}\mathsf Q_{\zeta}(b')\\
&&-\sum_{i+j=k}\sum_{\xi+\alpha_1=i}\sum_{\zeta+\beta_1=j}\mathsf Q'_{\xi}(b)n_{\alpha_1}m_{\beta_1}\mathsf Q_{\zeta}(b').
\end{eqnarray*}
By assumption of induction, cancelling similar sentences and replacin $\mathsf Q'_i$ with $\mathtt q_{2i}+\mathsf q'_i$ we have
\begin{eqnarray*}
0&=&+\sum_{i+j=k}\mathtt q_{2i}(b)\sum_{r=1}^{\eta_j}\sum_{\zeta+(\alpha+\beta)_r=j}n_{\alpha_r}m_{\beta_r}\ldots n_{\alpha_1}m_{\beta_1}\mathsf Q_{\zeta}(b')\\
&&-\sum_{i+j=k}\mathsf Q'_i(b)\sum_{\zeta+\alpha_1+\beta_1=j}n_{\alpha_1}m_{\beta_1}\mathsf Q_{\zeta}(b')\\
&=&+\sum_{i+j=k}\mathtt q_{2i}(b)\sum_{r=2}^{\eta_j}\sum_{\zeta+(\alpha+\beta)_r=j}n_{\alpha_r}m_{\beta_r}\ldots n_{\alpha_1}m_{\beta_1}\mathsf Q_{\zeta}(b')\\
&&-\sum_{i+j=k}\mathsf q'_i(b)\sum_{\zeta+\alpha_1+\beta_1=j}n_{\alpha_1}m_{\beta_1}\mathsf Q_{\zeta}(b')\\
&=&\\
&&\vdots\\
&=&\sum_{(\alpha+\beta)_{\eta_k}=k}bn_{\alpha_{\eta_k}}m_{\beta_{\eta_k}}\ldots n_{\alpha_1}m_{\beta_1}b'-\sum_{(\alpha+\beta)_{\eta_k}=k}bn_{\alpha_{\eta_k}}m_{\beta_{\eta_k}}\ldots n_{\alpha_1}m_{\beta_1}b'.
\end{eqnarray*}
Hence $\{\mathsf Q_k\}_{k\in\mathbb{N}_0}$ is a higher derivation on $B$. By similar techniques one can be shown that
$\{\mathsf P_k\}_{k\in\mathbb{N}_0}, \{\mathsf P'_k\}_{k\in\mathbb{N}_0}$ are higher derivations on $A$ and also $\{\mathsf Q'_k\}_{k\in\mathbb{N}_0}$ is a higher derivation on $B$.\\
Now consider equation $\left(\begin{array}{cc}
0 & m\\
0 & 0
\end{array}\right)\left(\begin{array}{cc}
0 & 0\\
n & 0
\end{array}\right)=\left(\begin{array}{cc}
mn & 0\\
0 & 0
\end{array}\right)$, apply $\D_k$ on it then we have,
\begin{eqnarray*}
\left(\begin{array}{cc}
\mathtt p_{1k}(mn) & *\\
* & \mathtt q_{1k}(mn)
\end{array}\right)&=&\sum_{i+j=k}\left(\begin{array}{cc}
\mathtt p_{3i}(m) & \mathtt f_{3i}(m)\\
\mathtt g_{3i}(m) & \mathtt q_{3i}(m)
\end{array}\right)\left(\begin{array}{cc}
\mathtt p_{4j}(n) &\mathtt  f_{4j}(n)\\
\mathtt g_{4j}(n) & \mathtt q_{4j}(n)
\end{array}\right).
\end{eqnarray*}
Hence
\[\mathtt p_{1k}(mn)=\sum_{i+j=k}\big(\mathtt p_{3i}(m)\mathtt p_{4j}(n)+\mathtt f_{3i}(m)\mathtt g_{4j}(n)\big),\]
and
\[\mathtt q_{1k}(mn)=\sum_{i+j=k}\big(\mathtt g_{3i}(m)\mathtt  f_{4j}(n)+\mathtt q_{3i}(m) \mathtt q_{4j}(n)\big).\]
Similarly one can check similar equations for $\mathtt p_{2k}(nm)$ and $\mathtt q_{2k}(nm)$.
\end{proof}
\subsection*{\color{SEC} Proof of Proposition \ref{center}}
\begin{proof}
Let $\tau$ maps into the center of $\mathcal{G}$ and vanishes at commutators. Suppose that linear map $\tau_k:\mathcal{G}\longrightarrow Z(\mathcal{G})$ has general form as;
\small \begin{eqnarray}\label{sF1}
\tau_k\left(\begin{array}{cc}
a & m\\
n & b
\end{array}\right)=\left(\begin{array}{cc}
p_{1k}(a)+p_{2k}(b)+p_{3k}(m)+p_{4k}(n) &  \\
  & q_{1k}(b)+q_{2k}(b)+q_{3k}(m)+q_{4k}(n)
\end{array}\right),
\end{eqnarray}
for all $k\in\mathbb{N}$.
As $\tau$ vanishing at commutators we have
\begin{eqnarray}\label{sF2}
\nonumber 0&=&\tau_k\left[\left(\begin{array}{cc}
a & m\\
n & b
\end{array}\right),\left(\begin{array}{cc}
a' & m'\\
n' & b'
\end{array}\right)\right]\\
&=&\tau_k\left(\begin{array}{cc}
[a,a']+mn'-m'n & am'+mb'-a'm-m'b\\
na'+bn'-n'a-b'n & [b,b']+nm'-n'm
\end{array}\right),
\end{eqnarray}
for all $k\in\mathbb{N}$. Now by relations \eqref{sF1} and \eqref{sF2} we have
\[p_{3k}(am'+mb'-a'm-m'b)=0, \quad q_{3k}(am'+mb'-a'm-m'b)=0,\]
if we set $a=a'=0, b=0, b'=1$ in the above commutator we have $p_{3k}(m)=0$ and $q_{3k}(m)=0$ for all $m\in M$ and $k\in\mathbb{N}$. Similarly, $p_{4k}(n)=0, q_{4k}(n)=0$ for all $n\in N, k\in\mathbb{N}$. Moreover if $m'=0, n=0$ we have $p_{1k}(mn')=p_{2k}(n'm), q_{1k}(mn')=q_{2k}(n'm)$ for all $m\in M, n'\in N, k\in\mathbb{N}$. One more time set $b=0, m=0, n=0$ in relation \eqref{sF2} then we have $p_{1k}[a,a']=0$ and $q_{1k}[a,a']=0$ for all $a, a'\in A, k\in\mathbb{N}$. By a similar way one can check that $p_{2k}[b,b']=0, q_{2k}[b,b']=0$ for all $b, b'\in B, k\in\mathbb{N}$. As $\tau$ maps into $Z(\mathcal{G})$ we have;
\[\left[\left(\begin{array}{cc}
p_{1k}(a)+p_{2k}(b) & 0\\
0 & q_{1k}(a)+q_{2k}(b)
\end{array}\right),\left(\begin{array}{cc}
a' & m'\\
n' & b'
\end{array}\right)\right]=0,\]
for all $a,a'\in A, b,b'\in B, m'\in M, n'\in N$ and $k\in\mathbb{N}$. From this, a direct  verification reveals that the remainders hold. The reverse argument is trivial.
\end{proof}

\end{document}